\documentclass[11pt]{article}

\usepackage[letterpaper]{geometry}
\usepackage[dvipsnames]{xcolor}
\usepackage{adjustbox}
\usepackage{setspace}

\usepackage{natbib}
 \bibpunct[, ]{(}{)}{,}{a}{}{,}%
\usepackage{tikz}
\usetikzlibrary{calc}

\usepackage{amsmath, amssymb, bm, mathtools, mathdots, amsthm, tikz-cd,mathtools}
\usepackage{algorithm,algorithmic}
\usepackage[english]{babel}
\usepackage[normalem]{ulem}
\usepackage{amsthm}
\usepackage{array, booktabs, tabularx}
\usepackage{mathrsfs}
\usepackage{bbm}
\usepackage{graphicx, caption, subfig, xcolor}
\usepackage{enumitem}
\usepackage{hyperref}

\graphicspath{{images/}}

\newcommand{\assref}[1]{%
  \hyperref[ass:#1]{\textup{(\ref*{ass:#1})}}%
}

\newcommand{\CVaR}{\mathrm{CVaR}}

\newtheorem{theorem}{Theorem}[section]
\newtheorem{corollary}[theorem]{Corollary}
\newtheorem{lemma}[theorem]{Lemma}
\newtheorem{proposition}[theorem]{Proposition}
\newtheorem{definition}[theorem]{Definition}
\newtheorem{example}[theorem]{Example}  
\newtheorem{remark}[theorem]{Remark}

\newtheorem{assumption}{Assumption}

\hypersetup{hidelinks}

\usepackage{xcolor}

\author{
Hamed Amini
\footnote{Center for Applied Optimization, Department of Industrial and Systems Engineering, University of Florida, Gainesville, FL, USA (\href{mailto:aminil@ufl.edu}{aminil@ufl.edu}).}
\and 
Zhecheng Wu
\footnote{Center for Applied Optimization, Department of Industrial and Systems Engineering, University of Florida, Gainesville, FL, USA (\href{mailto:wuz1@ufl.edu}{wuz1@ufl.edu}).}
}

\begin{document}

\title{Local Weak Limits for Equilibrium and Risk in Economic Networks}

\maketitle

\begin{abstract}
We study equilibrium and risk evaluation in large sparse economic networks
with heterogeneous responses, shocks, and bilateral exposures.
Our approach approximates the distribution of equilibrium outcomes through
local computations on a limiting rooted network. Under marked local weak
convergence, we prove convergence in probability of the empirical
equilibrium distribution in two settings: uniformly contractive responses
and bounded monotone nonexpansive responses whose lower and upper root
iterations coalesce. In the latter setting, the limit is independent of
the measurable finite-network equilibrium selection, even when equilibria
are not unique. Under bounded domain and continuity assumptions, this
convergence extends to law-invariant risk measures, including conditional
value-at-risk, with recursion-depth error bounds under additional
regularity. Numerical experiments on production networks and payment clearing systems
illustrate the accuracy and computational benefits of the local
approximation.

\medskip
\noindent\textbf{Keywords:}
local weak convergence; network equilibria; stochastic networks;
systemic risk.
\end{abstract}

\section{Introduction}

Economic networks connect firms, financial institutions, and sectors through
directed and weighted relationships. Their equilibrium outcomes depend not
only on local conditions but also on the states of their counterparties.
In production networks, input--output dependencies transmit disruptions
across firms and sectors. In payment systems, an institution's ability to
meet its obligations depends on the payments it receives from others.
Equity cross-holdings and network games with strategic complementarities
exhibit similar interdependence. These models share a recursive structure:
each node responds to a weighted aggregate of neighboring states and an
exogenous shock.

Although interactions are local, their equilibrium effects can be global.
Shocks propagate along indirect paths and may be amplified through feedback
loops, so evaluating a node's outcome generally requires solving a coupled
system of network-wide fixed-point equations. Repeated equilibrium
computation across network and shock realizations can become costly as the
system grows. Our focus is on approximating the empirical distribution of
these equilibrium outcomes and evaluating risk through the associated
cross-sectional loss distribution.

This raises the central question of the paper:
\emph{under what conditions can equilibrium distributions and their
associated risks in large economic networks be approximated through local
computations?}
Sparsity provides a natural starting point, since each node interacts
directly with relatively few others. However, sparsity alone does not make
equilibrium a local quantity: indirect effects may extend far beyond any
fixed neighborhood. A local approximation therefore requires control of
the influence of distant nodes.

We address this question through marked local weak convergence. Rather
than analyzing the entire network, we study its environment as seen from
a uniformly sampled node, retaining the responses, shocks, and bilateral
exposures relevant to the local recursion. For a fixed number of iterations,
the root outcome depends only on a finite neighborhood. Stability
conditions allow us to pass from local iterations to equilibrium
and establish convergence of the empirical outcome distribution. Under
additional boundedness and continuity assumptions, this convergence extends
to law-invariant risk measures, including conditional value-at-risk (CVaR).
The framework connects large-network equilibrium and risk
evaluation to computations on a limiting rooted network.

\subsection{Related Literature}\label{subsec:literature}

Economic network models frequently characterize equilibrium as a fixed
point of a network interaction map. \citet{Eisenberg2001Systemic}
establish existence of clearing payment vectors and uniqueness under
additional regularity conditions. \citet{ElliottGolubJackson2014}
analyze financial cross-holdings and failure cascades induced by
discontinuous losses in asset values. In production networks,
\citet{Leontief1966InputOutputEconomics} describes intersectoral
dependence through input--output equations, while \citet{Acemoglu2012}
relate the propagation of idiosyncratic shocks to the structure of the
input--output network. \citet{AcemogluOzdaglarTahbazSalehi2016} develop
a common interaction framework encompassing production economies,
financial systems, and network games. We study continuous,
order-preserving specifications of this framework, with heterogeneous
responses, shocks, and primitive exposures encoded as marks.

The probabilistic framework builds on local weak convergence and the
objective method, which relate asymptotic properties of finite networks
to functionals of limiting rooted graphs; see
\citet{BenjaminiSchramm2001,aldous2004objective,Hofstad_2024}.
For directed networks, \citet{garavaglia2020pagerank} introduce a
degree-marked exploration and use local weak convergence to establish
limiting PageRank distributions. \citet{OlveraCravioto2022} develops
fixed-depth couplings between graph explorations and marked
Galton--Watson limits for several locally tree-like models.
Our outgoing exploration retains the complete outgoing exposure
configuration of each explored vertex, making row normalization and
finite-depth interaction updates continuous in the marked local
topology. Local weak convergence has also been used to analyze interacting
processes on sparse networks. \citet{LackerRamananWu2023} establish
continuity of process laws and empirical-measure limits for interacting
Markov chains and diffusions.
\citet{FraimanLinOlveraCravioto2023} couple stochastic recursions on
directed graphs to recursions on marked Galton--Watson trees and give
conditions for convergence to an attracting endogenous solution of a
distributional fixed-point equation. Related local methods are used by
\citet{amini2024dynamics} for cascading losses with discontinuous
updates on locally tree-like networks. Our focus is convergence in
probability of the empirical laws of finite-network equilibria.
The passage from finite-depth local convergence to equilibrium is
justified by uniform truncation estimates in the contractive regime
and by extremal monotone bounds in the nonexpansive regime.

The nonexpansive analysis uses classical order-theoretic fixed-point
methods. Tarski's theorem \citep{Tarski1955} provides least and greatest
fixed-points of order-preserving self-maps on complete lattices;
monotone iteration is a standard constructive approach in ordered
spaces (see \citet{Amann1976}).
In our setting,
continuity of the responses and local finiteness permit passage to
the coordinatewise limits of the extremal iterations. Coalescence at
the limiting root then controls every measurable finite-network
equilibrium selection. On tree limits, this condition is related to
endogeny of recursive tree processes, namely measurability with
respect to the underlying innovations; see
\citet{aldous2005survey}. We use root coalescence as a sufficient
condition for equilibrium-law convergence, rather than as a general
characterization of endogeny.

A preliminary conference version \citet{WuAmini2026LocalApproximation}
establishes finite-type equilibrium and risk limits, including selection
independence under root coalescence and a weighted-offspring matrix
criterion. The present paper extends the framework to Polish type spaces
and allows vertex-dependent response envelopes without a common bound
in the nonexpansive equilibrium theorem. We formulate the coalescence
criterion through a positive operator on \(L^1(\mathcal T,\mu)\), where
\(\mu\) is the root-type law, and verify it for bounded-kernel random graph
limits under an integrable type-dependent envelope. We also derive
operator-based truncation bounds in both stability regimes.

The risk analysis builds on coherent and law-invariant risk
measurement; see \citet{Artzner1999,Kusuoka2001,Frittelli2005}.
\citet{chen2013axiomatic} distinguish aggregation across agents
from risk evaluation across scenarios; our object is a law functional
of the cross-sectional equilibrium-loss distribution of a realized
network. Law invariance alone does not imply continuity under weak
convergence. We therefore impose continuity of the law functional
for qualitative convergence and Wasserstein Lipschitz regularity
for quantitative bounds. On a common compact loss domain, weak convergence is equivalent
to convergence in the \(1\)-Wasserstein distance; see
\citet{villani2009optimal}.
Combined with the variational representation of conditional
value-at-risk in \citet{rockafellar2002cvar_general}, this yields
continuity of CVaR and explicit bounds on its sensitivity
to the loss law.
Our contribution is to
connect these distributional properties to empirical equilibrium
limits and recursion-depth approximation on sparse networks.

\subsection{Primary Contributions}\label{subsec:contributions}

We develop a marked local weak convergence framework for sparse directed
economic networks with heterogeneous types, responses, shocks, and
bilateral exposures. Interaction weights are obtained by normalizing
primitive exposure marks rather than treated as independent network
primitives. By retaining the complete outgoing exposure configuration of
each explored vertex, we establish continuity of normalization and
finite-depth interaction recursions in the marked local topology.

Our main results establish convergence in probability of the empirical
equilibrium-output law for systems of the form
\(
x_i=f_i\!\Bigl(\sum_j W_{ij}x_j+\epsilon_i\Bigr)
\)
under marked local weak convergence and suitable stability conditions.
For uniformly contractive responses, a uniform truncation estimate
controls the passage from finite-depth iterations to the unique
equilibrium. For bounded monotone nonexpansive responses, extremal
iterations yield the same convergence when their limiting root values
coalesce. The conclusion holds for every measurable finite-network
fixed-point selection, without requiring finite-network uniqueness.
On type-indexed branching-process limits, we derive a spectral
coalescence criterion using a weighted offspring operator, which reduces
to a nonnegative matrix for finite type spaces. For bounded-kernel
Poisson limits, we verify this criterion under integrability of the
type-dependent response envelope.

Finally, we connect equilibrium-law convergence to risk evaluation.
Under bounded domain and continuity assumptions, joint type--output
convergence implies convergence of the empirical loss distribution and
associated law-invariant risk measures, including conditional
value-at-risk. Under additional Lipschitz regularity, we derive
recursion-depth error bounds and separate truncation error from
fixed-depth local approximation error. These results justify
approximating equilibrium distributions and risk values through
recursions on limiting rooted networks rather than repeated
full network equilibrium computation.

\smallskip
The rest of the paper is organized as follows.
Section~\ref{sec:model-examples} introduces the network interaction
model and its economic applications.
Section~\ref{sec:marked_loc_top} specifies the marked-network
framework and local convergence assumptions.
Section~\ref{sec:equilibrium-risk} establishes the equilibrium
and risk limit theorems.
Section~\ref{sec:coalescence-approximation} develops coalescence
criteria, verifies the assumptions for directed marked
inhomogeneous random graphs, and derives recursion-depth
approximation bounds.
Section~\ref{sec:numerical} presents numerical experiments on
production networks and payment-clearing systems.
 Section~\ref{sec:conclusion} concludes.
Appendix~\ref{app:technical-details} provides the supporting results on
the marked local topology and continuity of finite-depth recursions,
together with the proof of marked random graph convergence.

\section{Model and Examples}
\label{sec:model-examples}

For each \(n\geq1\), let \(G_n\) be a directed weighted graph on
\([n]:=\{1,\ldots,n\}\). An edge \((i,j)\) indicates that the state
of vertex \(i\) may depend on the state of vertex \(j\). The interaction
weights satisfy \(W_{ij}(n)\geq0\) and
\(\sum_{j=1}^n W_{ij}(n)\leq1\) for every \(i\in[n]\).
Each vertex \(i\) has an exogenous shock
\(\epsilon_i(n)\in\mathbb R\) and a continuous nondecreasing response
\(f_i:\mathbb R\to\mathbb R\); to simplify notation, we suppress the
dependence of \(f_i\) on \(n\).
Write \(W(n)=(W_{ij}(n))_{i,j=1}^n\) and
\(\epsilon(n)=(\epsilon_i(n))_{i=1}^n\).
The index \(n\) identifies the finite network and its size, while the
superscript \(k\) denotes the iteration step. Accordingly, write
\(x^k(n)=(x_i^k(n))_{i=1}^n\). Starting from \(x_i^0(n)=0\), define
\begin{equation}
\label{eq:finite-iteration}
x_i^{k+1}(n)
=
f_i\!\Bigl(
\sum_{j=1}^n W_{ij}(n)x_j^{k}(n)+\epsilon_i(n)
\Bigr),
\qquad i\in[n],\quad k\geq0.
\end{equation}
In particular,
\(x_i^{1}(n)=f_i\bigl(\epsilon_i(n)\bigr)\).
An equilibrium is a vector
\(x^{\star}(n)=(x_i^{\star}(n))_{i=1}^n\) satisfying
\begin{equation}
\label{eq:1}
x_i^{\star}(n)
=
f_i\!\Bigl(
\sum_{j=1}^n W_{ij}(n)x_j^{\star}(n)+\epsilon_i(n)
\Bigr),
\qquad i\in[n].
\end{equation}
This formulation follows the network-interaction framework of
\citet{AcemogluOzdaglarTahbazSalehi2016}. The fixed-point arguments
use only row-substochasticity; for the local-convergence results,
weights are obtained from primitive exposures by the normalization
in Section~\ref{sec:mark_assumption}.

We distinguish two response regimes. In the \emph{contractive regime},
the responses satisfy \(f_i(0)=0\) and
\(\operatorname{Lip}(f_i)\leq L\) for a common
\(L\in[0,1)\), uniformly over vertices and network sizes.
Together with row-substochasticity of \(W(n)\), this makes the
network update a contraction in the supremum norm and ensures a unique
finite-network equilibrium. In the \emph{bounded nonexpansive regime},
each response is nonnegative and bounded, with Lipschitz constant at
most one. The response bound may depend on the vertex and network size.
Monotonicity and boundedness yield least and greatest equilibria,
but not necessarily uniqueness. Our convergence result in this regime
instead requires coalescence of the lower and upper iterations at the
limiting root.

The following examples illustrate the correspondence with standard economic
models and the restrictions required by our analysis. To simplify notation,
we suppress the network index \(n\) and write \(x_i^\star\) for equilibrium
states.

\begin{example}[Financial cross-holdings]
Let \(H_{ij}\) be the fraction of firm \(j\) owned by firm \(i\),
let \(D_{i\ell}\) be firm \(i\)'s holding of primitive asset \(\ell\),
and let \(p_\ell\) be its price. The pre-failure valuation system of
\citet{ElliottGolubJackson2014} is
\begin{equation}
\label{eq:cross-holdings}
x_i^\star
=
\sum_\ell D_{i\ell}p_\ell
+
\sum_{j=1}^n H_{ij}x_j^\star,
\end{equation}
where \(x_i^\star\) denotes the value of firm \(i\).
Set \(H_i:=\sum_j H_{ij}\). For \(H_i>0\), this is
\eqref{eq:1} with \(W_{ij}=H_{ij}/H_i\),
\(f_i(z)=H_i z\), and
\(\epsilon_i=H_i^{-1}\sum_\ell D_{i\ell}p_\ell\).
Application of the contraction theorem requires
\(H_i\leq L\) for a common \(L\in(0,1)\) throughout the graph
sequence, together with its assumptions on the rescaled shocks and
marked local convergence. If \(H_i=0\), take \(W_{ij}=0\),
\(f_i(z)=Lz\), and
\(\epsilon_i=L^{-1}\sum_\ell D_{i\ell}p_\ell\), preserving
\eqref{eq:cross-holdings}. The uniform bound on the ownership row sums
\(H_i\) is an additional assumption; it does not follow from the usual
ownership column constraints.
\end{example}

\begin{example}[Production networks]
In \citet{AcemogluOzdaglarTahbazSalehi2016}, the equilibrium
log-outputs satisfy
\begin{equation}
\label{eq:production-log-linear}
x_i^\star
=
\alpha\sum_{j=1}^n W_{ij}x_j^\star+\alpha\epsilon_i,
\qquad
\sum_{j=1}^n W_{ij}=1,
\qquad
\alpha\in(0,1),
\end{equation}
where \(\epsilon_i\) is the log-productivity shock. This is a
special case of \eqref{eq:1} with \(f_i(z)=\alpha z\), and hence
has contraction constant \(L=\alpha\). Equilibrium-law convergence
additionally requires the shock and marked local-convergence
assumptions of the contraction theorem.
\end{example}

\begin{example}[Quadratic network games]
Consider the following linear-quadratic network game with strategic complementarities, as in \citet{BallesterCalvoArmengolZenou2006}. Let \(g_{ij}\geq0\),
\(g_{ii}=0\), \(\beta\geq0\), and suppose agent \(i\)'s payoff is
\[
u_i(x_i,x_{-i})
=
(\theta_i+\eta_i)x_i
-\frac12x_i^2
+\beta x_i\sum_{j=1}^n g_{ij}x_j,
\]
where \(\eta_i\) is an idiosyncratic perturbation. At an interior
equilibrium, the first-order conditions give
\begin{equation}
\label{eq:quadratic_game_linear}
x^\star=\beta Gx^\star+\theta+\eta,
\qquad G=(g_{ij}).
\end{equation}
Set \(\alpha_i:=\beta\sum_j g_{ij}\). For \(\alpha_i>0\), this is
\eqref{eq:1} with \(W_{ij}=\beta g_{ij}/\alpha_i\),
\(f_i(z)=\alpha_i z\), and
\(\epsilon_i=(\theta_i+\eta_i)/\alpha_i\).
Zero rows are handled as in the cross-holdings example.
A uniform bound \(\alpha_i\leq L<1\) throughout the network sequence
ensures contraction and makes \((I-\beta G)^{-1}\) well defined.
This network multiplier describes the propagation of changes in payoff
intercepts through strategic complementarities. Interiority alone does
not imply the contraction bound, and application of the convergence
theorem also requires its assumptions on the rescaled shocks and marked
local convergence.
\end{example}

\begin{example}[Eisenberg--Noe payment clearing]
Following \citet{Eisenberg2001Systemic}, let \(L_{ij}\geq0\)
be the nominal liability of institution \(i\) to institution \(j\),
with \(L_{ii}=0\), and let \(e_i\geq0\) be its external cash flow.
Augmenting the system with liabilities \(L_{i0}\geq0\) to an outside
creditor, define
\(\bar p_i:=L_{i0}+\sum_j L_{ij}>0\) and
\(\Pi_{ij}:=L_{ij}/\bar p_i\).
Under proportional repayment, the clearing payment vector satisfies
\begin{equation}
\label{eq:balanced-en-clearing}
p_i^\star
=
\min\Bigl\{
\bar p_i,\,
e_i+\sum_{j=1}^n \Pi_{ji}p_j^\star
\Bigr\}.
\end{equation}
To match our creditor-to-debtor orientation, set \(C_{ij}=L_{ji}\)
and write \(x_i^\star:=p_i^\star/\bar p_i\) for the repayment
fraction. Define \(\alpha_i:=\sum_j C_{ij}/\bar p_i\).
For \(\alpha_i>0\), the normalized representation in
\eqref{eq:1} is
\[
W_{ij}=\frac{C_{ij}}{\sum_k C_{ik}},
\qquad
\epsilon_i=\frac{e_i}{\sum_j C_{ij}},
\qquad
f_i(z)=\min\{1,\max\{0,\alpha_i z\}\}.
\]
If \(\alpha_i=0\), take \(W_{ij}=0\), \(\epsilon_i=0\), and
\(f_i(z)=\min\{1,e_i/\bar p_i\}\).
The bounded nonexpansive assumptions hold when
\(\alpha_i\leq1\) for every institution; this is an additional
restriction, not a property of every Eisenberg--Noe network.
A sufficient condition is institution-by-institution interbank balance,
\(\sum_j L_{ji}=\sum_j L_{ij}\), under which
\(\alpha_i=1-L_{i0}/\bar p_i\leq1\).
Without outside liabilities, balance gives \(\alpha_i=1\),
\(W_{ij}=L_{ji}/\bar p_i\), and \(\epsilon_i=e_i/\bar p_i\).
The balanced common-liability networks considered by
\citet{AcemogluOzdaglarTahbazSalehi2016} are a special case.
\end{example}

\section{Marked Networks and Local Convergence}
\label{sec:marked_loc_top}
\label{sec:mark_assumption}

We use marked local weak convergence
(see \citet{garavaglia2020pagerank,LackerRamananWu2023,Hofstad_2024}),
with outgoing exploration consistent with our dependency convention.
We specify the marks and neighborhood restrictions needed for the
equilibrium recursion; supporting metric and continuity arguments
are given in Appendix~\ref{app:technical-details}.

Let \(G=(V,E)\) be a forward-locally-finite directed graph, meaning
that every vertex has finite out-degree; no restriction on in-degrees
is imposed. Fix a vertex \(o\in V\), called the \emph{root}.
Let \(\mathsf M^V\) and \(\mathsf M^E\) be Polish vertex- and
edge-mark spaces, equipped with complete compatible metrics
\(d_{\mathsf M^V}\) and \(d_{\mathsf M^E}\).
A rooted marked directed graph is written as
\((G,o,\mathcal M(G))\), where
\(\mathcal M(G)=((m_v)_{v\in V},(m_e)_{e\in E})
\in(\mathsf M^V)^V\times(\mathsf M^E)^E\).

Two rooted marked directed graphs \((G,o,\mathcal M(G))\) and
\((G',o',\mathcal M(G'))\) are \emph{isomorphic}, denoted by
\((G,o,\mathcal M(G))\cong(G',o',\mathcal M(G'))\), if there
exists a bijection \(\phi:V(G)\to V(G')\) such that:
\(\textnormal{(i)}\) \((i,j)\in E(G)\) if and only if
\((\phi(i),\phi(j))\in E(G')\);
\(\textnormal{(ii)}\) \(\phi(o)=o'\);
\(\textnormal{(iii)}\) \(m_i=m'_{\phi(i)}\) for all \(i\in V(G)\);
and \(\textnormal{(iv)}\)
\(m_{(i,j)}=m'_{(\phi(i),\phi(j))}\) for all \((i,j)\in E(G)\).
Let \(\mathcal G_*\) denote the space of forward-locally-finite
rooted directed graphs with these mark spaces, modulo marked
isomorphism.

\begin{definition}[Outgoing neighborhood]
\label{def:U_neighborhood}
Let \(d_G^+(o,v)\) denote the shortest outgoing directed-path
distance from \(o\) to \(v\), with value infinity if \(v\)
is unreachable. For \(k\geq0\), the depth-\(k\) restriction
\(\mathcal N_{\leq k}^+(G,o)\) retains the original marks on
\[
V_k^+(o):=\{v\in V:d_G^+(o,v)\leq k\},
\qquad
E_k^+(o):=\{(v,u)\in E:d_G^+(o,v)<k\}.
\]
For \(\xi=[G,o,\mathcal M(G)]\in\mathcal G_*\), write
\([\xi]_k\) for the rooted marked isomorphism class of this
restriction, and let \(\mathfrak X_k\) denote the space of
such classes.
\end{definition}

Set \(d_V:=1\wedge d_{\mathsf M^V}\) and
\(d_E:=1\wedge d_{\mathsf M^E}\).
For \(a,b\in\mathfrak X_r\), let
\(\operatorname{Iso}(a,b)\) be the set of root-preserving
isomorphisms between their underlying directed graphs, ignoring
marks. Set \(d_r(a,b)=1\) if this set is empty; otherwise, define
\[
d_r(a,b)
=
\min_{\phi\in\operatorname{Iso}(a,b)}
\max\Bigl\{
\max_v d_V(m_v,m'_{\phi(v)}),
\max_{(v,u)}
d_E(m_{(v,u)},m'_{(\phi(v),\phi(u))})
\Bigr\}.
\]
The maxima range over the vertices and edges of \(a\), respectively;
primed marks belong to \(b\), and an empty edge maximum is zero.
Define
\[
d_{\mathrm{loc}}(\xi,\xi')
=
\sum_{r=0}^{\infty}2^{-r-1}
d_r\bigl([\xi]_r,[\xi']_r\bigr).
\]
This is a pseudometric on \(\mathcal G_*\), since outgoing
exploration observes only the forward component of the root.
Let \(\widetilde{\mathcal G}_*:=\mathcal G_*/\sim\), where
\(\xi\sim\xi'\) if \(d_{\mathrm{loc}}(\xi,\xi')=0\).
We use \(d_{\mathrm{loc}}\) also for the induced metric on the
quotient and \(\xi\) for a generic element of
\(\widetilde{\mathcal G}_*\).
By Proposition~\ref{prop:explored-local-polish},
\((\widetilde{\mathcal G}_*,d_{\mathrm{loc}})\) is Polish.

Local convergence \(\xi_m\to\xi\) means that, for every fixed
\(k\), the restrictions \([\xi_m]_k\) and \([\xi]_k\)
eventually have the same rooted directed shape and admit
isomorphisms under which all corresponding marks converge.
Equivalently, \(d_{\mathrm{loc}}(\xi_m,\xi)\to0\).

For a finite marked network, define the empirical rooted law
\[
\mathcal P_n
:=
\frac1n\sum_{i=1}^n
\delta_{[G_n,i,\mathcal M(G_n)]}.
\]
Write \(\langle\nu,h\rangle:=\int h\,d\nu\).
Thus \(\langle\mathcal P_n,h\rangle\) is an empirical average
on the realized network, whereas \(\mathbb E[\cdot]\) denotes
expectation over graph, mark, and sampling randomness.
Throughout, \(o_n\) is conditionally uniform on \([n]\)
given the marked network.

\begin{definition}[Local weak convergence in probability]
\label{def:lwc}
For a deterministic rooted law \(\mathcal P\), we write
\(\mathcal P_n\overset{\mathbb P}{\Rightarrow}\mathcal P\) if, for every
\(h\in C_b(\widetilde{\mathcal G}_*)\), the space of bounded continuous
functions on \(\widetilde{\mathcal G}_*\),
\[
\langle\mathcal P_n,h\rangle
\xrightarrow{\mathbb P}
\langle\mathcal P,h\rangle
\]
\end{definition}

For deterministic marked networks, this reduces to ordinary
weak convergence. Annealed convergence requires only
\(\mathbb E\langle\mathcal P_n,h\rangle
\to\langle\mathcal P,h\rangle\).
We use convergence in probability because our conclusions
concern empirical distributions on realized networks.

\emph{Specific to our economic model.}
Let \(\mathcal T\) be a Polish type space.
Each vertex \(v\) carries a mark
\(m_v=(\tau_v,\epsilon_v,f_v)\), recording its type, shock, and
response, and each present edge \((v,u)\) carries a positive
primitive exposure \(C_{vu}\).
For a fixed \(L\in[0,1)\), let \(\mathsf F_{\mathrm c}\) consist of
continuous nondecreasing responses with \(f(0)=0\) and
\(\operatorname{Lip}(f)\leq L\), equipped with 
$$
d_{\mathrm c}(f,g)
=
\sum_{q=1}^{\infty}2^{-q}
\Bigl(1\wedge\sup_{|z|\leq q}|f(z)-g(z)|\Bigr).$$  Let \(\mathsf F_{\mathrm{ne}}\) consist of bounded,
nonnegative, nondecreasing \(1\)-Lipschitz responses, equipped
with the uniform metric \(d_{\mathrm{ne}}(f,g)=\|f-g\|_\infty\).
Both spaces are Polish under their respective topologies.
Indeed, \(\mathsf F_{\mathrm c}\) is closed in \(C(\mathbb R)\)
under locally uniform convergence. Every bounded monotone continuous
response extends continuously to \([-\infty,+\infty]\), with its order
topology, and these extensions identify \(\mathsf F_{\mathrm{ne}}\)
with a closed subset of the separable Banach space
\(C([-\infty,+\infty])\) under the uniform norm.
In the nonexpansive regime, the envelope
\(\mathfrak b(f):=\sup_z f(z)\) is continuous because
\(|\mathfrak b(f)-\mathfrak b(g)|\leq\|f-g\|_\infty\);
no common envelope bound is imposed.
The vertex-mark spaces are
\(\mathsf M^V_{\mathrm c}
=\mathcal T\times\mathbb R\times\mathsf F_{\mathrm c}\)
and
\(\mathsf M^V_{\mathrm{ne}}
=\mathcal T\times\mathbb R\times\mathsf F_{\mathrm{ne}}\),
and the edge-mark space is \(\mathsf M^E=(0,\infty)\). The edge-mark space has the compatible complete metric
\(d_{\mathsf M^E}(c,c')=|\log c-\log c'|\).
Together with a complete metric on \(\mathcal T\), these
metrics give complete product metrics on the vertex-mark spaces.

For these mark spaces, the root-type projection
\(\vartheta([G,o,\mathcal M(G)]):=\tau_o\) is continuous.
Consequently, \(\mathcal P_n\overset{\mathbb P}{\Rightarrow}\mathcal P\)
implies \(\mu_n\overset{\mathbb P}{\Rightarrow}\mu\), where
\(\mu_n:=n^{-1}\sum_i\delta_{\tau_i(n)}\) and
\(\mu:=\mathcal P\circ\vartheta^{-1}\).
Type-conditioned laws are treated in
Appendix~\ref{app:type-conditioned-laws}.

For each present edge, set
\(W_{vu}:=C_{vu}/\sum_{w:(v,w)\in E(G)}C_{vw}\);
absent edges have weight zero, and vertices without outgoing
edges have zero network input. The depth-\(k\) restriction
reveals the complete outgoing star of every vertex at distance
less than \(k\). It therefore determines all weights entering
the \(k\)-step root recursion; see
Figure~\ref{fig:horizontal_exploration_normalization}.
Normalization and the finite-depth iterates are continuous
local functionals, as established in
Lemma~\ref{lem:fixed-depth-continuity}.

\begin{figure}[t]
\centering
\includegraphics[width=0.9\columnwidth]
{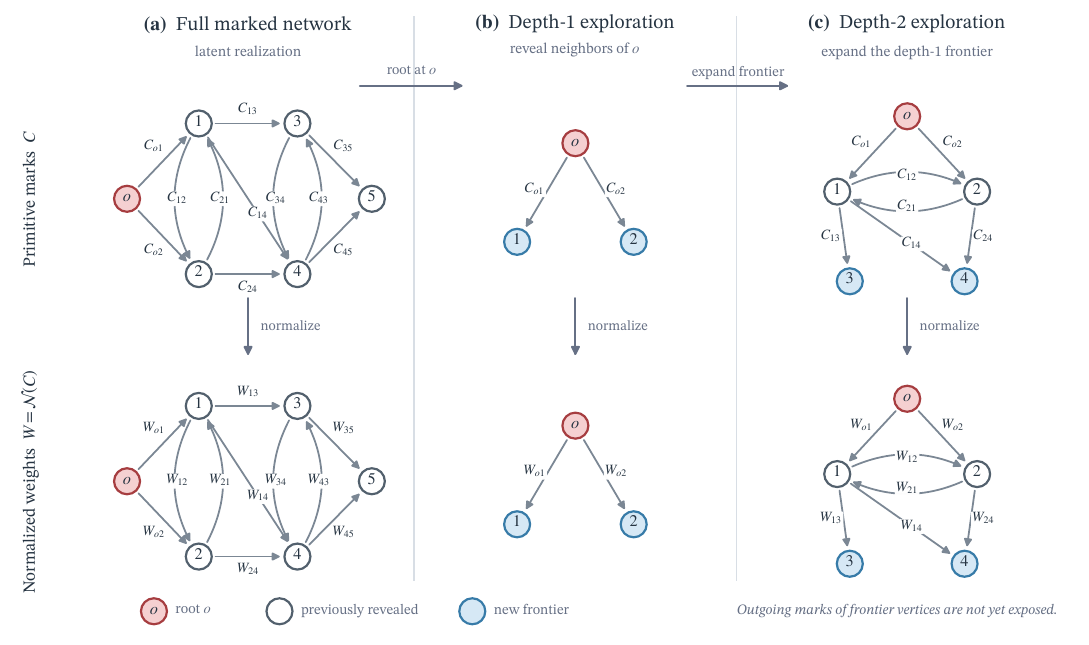}
\caption{Outgoing exploration and normalization. Primitive exposures
(top) and normalized weights (bottom) are shown for the full network
and its depth-\(1\) and depth-\(2\) restrictions.}
\label{fig:horizontal_exploration_normalization}
\end{figure}

The equilibrium results below apply to general locally
convergent marked networks; a directed marked inhomogeneous
random graph construction verifying this assumption is given in
Section~\ref{subsec:directed-marked-irg}.

\section{Equilibrium and Risk Limits}
\label{sec:equilibrium-risk}

We work with the marked networks and normalization of
Section~\ref{sec:marked_loc_top}. The results require marked
local convergence but no particular graph-generation mechanism
or independence assumptions on the graph and marks.
Throughout the limiting network arguments, we take \(G\) to be
the root's forward component, so that
\(V(G)=\bigcup_{r\geq0}V_r^+(o)\).
For a limiting rooted network \((G,o,\mathcal M(G))\), initialize
\(x_v^{0}=0\) and define
\begin{equation}
\label{eq:limit_truncated_recursion}
x_v^{k+1}
=
f_v\!\Bigl(
\sum_{u:(v,u)\in E(G)}W_{vu}x_u^{k}+\epsilon_v
\Bigr),
\qquad v\in V(G),\quad k\geq0.
\end{equation}
When the root iterates converge, write
\(x_o^\star:=\lim_{k\to\infty}x_o^{k}\).
Finite-network iterates and equilibria are denoted by
\(x_i^{k}(n)\) and \(x_i^{\star}(n)\), respectively.
The sampled vertex \(o_n\) is uniform on \([n]\) conditional
on the marked network and any randomness used to select its
equilibrium.

Although each update is local, the equilibrium generally depends
on the entire forward component. The first theorem shows that
marked local convergence determines the limiting equilibrium
distribution under uniform contraction. The contraction and
shock-envelope assumptions control the expected truncation error
uniformly in network size, allowing passage from finite-depth
iterations to equilibrium.

\begin{theorem}[Contractive equilibrium limit]
\label{thm:L<1}
Let \(\{(G_n,\mathcal M(G_n))\}_{n\geq1}\) be finite marked
networks with empirical rooted laws \(\mathcal P_n\). Assume:
\begin{enumerate}[label=(A\arabic*),ref=A\arabic*]
\item\label{ass:A1} \textbf{Uniform contraction.}
For a common \(L\in[0,1)\), every finite-network response
\(f_i\) is continuous and nondecreasing, satisfies
\(f_i(0)=0\), and has Lipschitz constant at most \(L\).
The same conditions hold \(\mathcal P\)-almost surely at every
vertex of the limiting forward component.

\item\label{ass:A2} \textbf{Uniform expected shock envelope.}
The shock vectors satisfy
\[
M_\epsilon
:=
\sup_{n\geq1}
\mathbb E[\|\epsilon(n)\|_\infty]
<\infty.
\]

\item\label{ass:A3} \textbf{Marked local convergence.}
\(\mathcal P_n\overset{\mathbb P}{\Rightarrow}\mathcal P\)
for a deterministic rooted law \(\mathcal P\).
\end{enumerate}
Every finite system has a unique equilibrium
\(x^{\star}(n)\), and the limiting forward component has a
unique bounded equilibrium \(x^\star\), almost surely (a.s.).
For every bounded Lipschitz \(\varphi:\mathbb R\to\mathbb R\),
\[
\frac1n\sum_{i=1}^n
\varphi\bigl(x_i^{\star}(n)\bigr)
\xrightarrow{\mathbb P}
\mathbb E[\varphi(x_o^\star)].
\]
Consequently, \(x_{o_n}^{\star}(n)\Rightarrow x_o^\star\).
\end{theorem}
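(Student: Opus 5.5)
The plan is a standard three-term approximation: contraction to pass from equilibria to depth-\(k\) iterates, local convergence at fixed depth, and a truncation error that vanishes uniformly in \(n\) as \(k\to\infty\).

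\emph{Existence and uniqueness.} Let \(T_n(x)_i=f_i(\sum_j W_{ij}(n)x_j+\epsilon_i(n))\). By row-substochasticity and \(\operatorname{Lip}(f_i)\le L\), \(T_n\) is an \(L\)-contraction on \((\mathbb R^n,\|\cdot\|_\infty)\). Banach's theorem then gives a unique \(x^\star(n)\). Since \(x^0=0\) and \(f_i(0)=0\), we have \(|x_i^1(n)|\le L\|\epsilon(n)\|_\infty\). Hence
\[
\|x^\star(n)-x^k(n)\|_\infty\le \frac{L^{k+1}}{1-L}\|\epsilon(n)\|_\infty .
\]
On the limiting forward component the same argument works on \(\ell^\infty(V(G))\), provided \(\sup_v|\epsilon_v|<\infty\). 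Here lies a subtlety: the limit object need not have bounded shocks. I would handle this by working at the root only. Define the weighted shock envelope
\[
S_o:=\sum_{r\ge0}L^r\max_{v:\,d^+(o,v)=r}|\epsilon_v| ,
\]
or more simply \(\sup_{v}L^{d^+(o,v)}|\epsilon_v|\). A telescoping bound gives
\[
|x_o^{k+1}-x_o^k|\le L^{k}\sup_{d^+(o,v)\le k}|x_v^1-x_v^0|\le L^{k+1}\sup_{d(o,v)\le k}|\epsilon_v| .
\]
The root iterates are therefore Cauchy whenever \(\sum_k L^k\max_{d\le k}|\epsilon_v|<\infty\). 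To get this a.s., I would transfer (A2) to the limit. By Fatou and local convergence, for each \(R\) one has \(\mathbb E[\max_{d^+(o,v)\le R}|\epsilon_v|]\le \liminf_n \mathbb E\langle \mathcal P_n,\max_{V_R^+}|\epsilon|\wedge K\rangle\le M_\epsilon\), since the finite-network value is bounded by \(\|\epsilon(n)\|_\infty\). Monotone convergence in \(R\) then gives \(\mathbb E\sup_{v\in V(G)}|\epsilon_v|\le M_\epsilon\). The shocks are thus a.s. bounded on the forward component, and the sup-norm contraction yields a unique bounded equilibrium with root value \(x_o^\star=\lim_k x_o^k\) and \(\mathbb E|x_o^\star-x_o^k|\le L^{k+1}M_\epsilon/(1-L)\).

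\emph{Convergence.} Fix \(\varphi\) bounded and Lipschitz with constant \(\ell\), and split
\[
\Bigl|\tfrac1n\textstyle\sum_i\varphi(x_i^\star(n))-\mathbb E\varphi(x_o^\star)\Bigr|\le \tfrac{\ell L^{k+1}}{1-L}\|\epsilon(n)\|_\infty+\bigl|\langle\mathcal P_n,\varphi\circ\chi_k\rangle-\langle\mathcal P,\varphi\circ\chi_k\rangle\bigr|+\tfrac{\ell L^{k+1}M_\epsilon}{1-L},
\]
where \(\chi_k(\xi)=x_o^k\) is the depth-\(k\) root iterate. By Lemma~\ref{lem:fixed-depth-continuity}, \(\chi_k\) is continuous on \(\widetilde{\mathcal G}_*\). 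Hence \(\varphi\circ\chi_k\in C_b\), and the middle term tends to zero in probability by (A3). For the first term, Markov's inequality gives \(\mathbb P(\|\epsilon(n)\|_\infty>\eta^{-1})\le \eta M_\epsilon\). Choosing \(k\) large after \(\eta\) makes the first and third terms small with high probability uniformly in \(n\). Convergence in probability follows, and bounded Lipschitz test functions suffice for \(x^\star_{o_n}(n)\Rightarrow x_o^\star\) (take expectations and use bounded convergence).

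\emph{Main obstacle.} The delicate step is the limit-side control: showing the limiting forward component has a.s. bounded shocks, or at least summable weighted shocks, so that the equilibrium exists and the truncation error is uniformly integrable. The Fatou/local-convergence transfer of (A2) requires \(\max_{V_R^+(o)}|\epsilon_v|\wedge K\) to be a continuous local functional. This holds because the depth-\(R\) neighborhood is finite and marks vary continuously. One also needs a.s. finiteness of \(V_R^+(o)\), which is built into forward-local finiteness. If this transfer failed, I would fall back to defining \(x_o^\star\) as the \(L^1\)-limit of the Cauchy sequence \(x_o^k\), using only the root-level bound.
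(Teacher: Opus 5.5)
Your proposal is correct and follows essentially the same route as the paper: Banach contraction with the telescoping $L^{k+1}/(1-L)$ bound; transfer of (A2) to the limit through the bounded continuous functionals $K\wedge\max_{V_R^+(o)}|\epsilon_v|$ followed by monotone convergence; Banach's theorem on $\ell^\infty(V(G))$; and a three-term split using fixed-depth continuity. The only differences are minor. You control the finite-network truncation term in probability via Markov's inequality, whereas the paper takes expectations and obtains $L^1$ convergence directly. Also, in your Fatou display the truncation $\wedge K$ must appear on the left-hand side as well, with $K\to\infty$ taken only afterwards.
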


The strict response-contraction assumption excludes capped payment
maps whose Lipschitz constant is one. We therefore consider bounded
monotone nonexpansive responses, for which lower and upper iterations
bound every equilibrium. The next theorem replaces strict contraction
with coalescence of these iterations at the limiting root, without
requiring finite-network uniqueness.

\begin{theorem}[Monotone nonexpansive equilibrium limit]
\label{thm:Lleq1}
Let \(\{(G_n,\mathcal M(G_n))\}_{n\geq1}\) be finite marked
networks with empirical rooted laws \(\mathcal P_n\). Assume:
\begin{enumerate}[label=(B\arabic*),ref=B\arabic*]
\item\label{ass:B1} \textbf{Marked local convergence.}
\(\mathcal P_n\overset{\mathbb P}{\Rightarrow}\mathcal P\)
for a deterministic rooted law \(\mathcal P\).

\item\label{ass:B2} \textbf{Bounded nonexpansive responses.}
Every finite-network response \(f_i\) is bounded,
nonnegative, nondecreasing, and \(1\)-Lipschitz.
The same conditions hold \(\mathcal P\)-almost surely at every
vertex of the limiting forward component.

\item\label{ass:B3} \textbf{Root coalescence.}
On the limiting network, iterate the update in
\eqref{eq:limit_truncated_recursion} from
\(x_v^{-,0}=0\) and \(x_v^{+,0}=\mathfrak b(f_v)\).
The resulting root iterates satisfy
\[
x_o^{+,k}-x_o^{-,k}\longrightarrow0
\qquad\text{a.s. as }k\to\infty.
\]
\end{enumerate}
Every finite system admits least and greatest equilibria,
and the limiting extremal root iterates have a common limit
\(x_o^\star\).
For every measurable selection \(x^{\star}(n)\) of a
finite-network equilibrium and every bounded Lipschitz
\(\varphi:\mathbb R\to\mathbb R\),
\[
\frac1n\sum_{i=1}^n
\varphi\bigl(x_i^{\star}(n)\bigr)
\xrightarrow{\mathbb P}
\mathbb E[\varphi(x_o^\star)].
\]
In particular, \(x_{o_n}^{\star}(n)\Rightarrow x_o^\star\).
\end{theorem}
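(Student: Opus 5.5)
Our plan is a sandwich argument: the extremal iterations trap every equilibrium, and each fixed-depth iterate converges by local weak convergence. \textbf{Step 1 (finite-network extremal equilibria).} On \([n]\) the update map \(T_n(x)_i=f_i(\sum_jW_{ij}x_j+\epsilon_i)\) is order-preserving. It maps the box \(\prod_i[0,\mathfrak b(f_i)]\) into itself, by nonnegativity and boundedness of the \(f_i\) and row-substochasticity of \(W(n)\). Tarski's theorem, or simply continuity plus monotone iteration in finite dimension, gives least and greatest fixed points \(\underline x(n)\le\overline x(n)\). The lower iterates \(x^{-,k}(n)\) from \(0\) increase to \(\underline x(n)\), and the upper iterates \(x^{+,k}(n)\) from \(\mathfrak b(f)\) decrease to \(\overline x(n)\). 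By induction and monotonicity, any equilibrium \(x^\star(n)\) satisfies \(x^{-,k}_i(n)\le x^\star_i(n)\le x^{+,k}_i(n)\) for every \(k\) and \(i\).

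\textbf{Step 2 (limit object).} On the limiting forward component, which is a.s. forward-locally-finite, the same induction shows that \(x^{-,k}_v\) is nondecreasing and \(x^{+,k}_v\) is nonincreasing in \(k\), with \(0\le x^{-,k}_v\le x^{+,k}_v\le\mathfrak b(f_v)\). Hence both sequences have coordinatewise limits. Assumption (B3) then forces the two root limits to coincide, and we call the common value \(x_o^\star\).

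\textbf{Step 3 (fixed-depth convergence).} Fix \(k\). The root values \(x^{\pm,k}_o\) depend only on \([\xi]_k\); the upper initialization uses \(\mathfrak b(f_v)\), which is continuous in the mark. By Lemma~\ref{lem:fixed-depth-continuity}, extended to the upper initialization using the continuity of \(\mathfrak b\), the maps \(\xi\mapsto x^{\pm,k}_o(\xi)\) are continuous on \(\widetilde{\mathcal G}_*\). For bounded Lipschitz \(\varphi\), the functions \(\varphi\circ x^{\pm,k}_o\) are therefore in \(C_b\), and (B1) gives \(n^{-1}\sum_i\varphi(x^{\pm,k}_i(n))\to\mathbb E\varphi(x^{\pm,k}_o)\) in probability. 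The unbounded envelope is harmless here because \(\varphi\) is bounded.

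\textbf{Step 4 (sandwich).} Write \(\mathrm{Lip}(\varphi)=\ell\) and \(\|\varphi\|_\infty\le B\). Then
\[
\Bigl|\tfrac1n\textstyle\sum_i\varphi(x^\star_i(n))-\tfrac1n\sum_i\varphi(x^{-,k}_i(n))\Bigr|\le\tfrac1n\textstyle\sum_i\min\{2B,\ell(x^{+,k}_i(n)-x^{-,k}_i(n))\}.
\]
The function \(\xi\mapsto\min\{2B,\ell(x^{+,k}_o-x^{-,k}_o)\}\) is bounded continuous, so the right side converges in probability to \(\mathbb E\min\{2B,\ell(x^{+,k}_o-x^{-,k}_o)\}\). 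By (B3) and bounded convergence, this tends to \(0\) as \(k\to\infty\). Likewise \(\mathbb E\varphi(x^{-,k}_o)\to\mathbb E\varphi(x^\star_o)\). A standard \(\varepsilon/3\) argument completes the proof: first choose \(k\) large, then \(n\) large. Measurability of the selection is needed only so that the averages are random variables. The weak convergence \(x^\star_{o_n}(n)\Rightarrow x^\star_o\) follows by taking expectations of the bounded averages and using the fact that bounded Lipschitz functions determine weak convergence.

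The main obstacle is Step 3. Continuity of the fixed-depth map in the nonexpansive mark space must hold even though envelopes are not uniformly bounded and the upper initialization depends on the marks at depth \(k\). We would first check that the proof of Lemma~\ref{lem:fixed-depth-continuity} goes through with arbitrary continuous initial vertex functionals; the truncation by \(2B\) in Step 4 avoids any need for integrability of \(\mathfrak b\).
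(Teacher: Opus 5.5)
Your proposal is correct and follows essentially the same route as the paper: the extremal iterates sandwich every measurable equilibrium selection, and Lemma~\ref{lem:fixed-depth-continuity} gives fixed-depth convergence. That lemma already covers both initializations \(x_v^{\pm,0}\), so no extension is needed. The truncated bracket functional \(\min\{\operatorname{Lip}(\varphi)(\Phi_k^+-\Phi_k^-),2\|\varphi\|_\infty\}\), together with root coalescence and bounded convergence, then removes the truncation by letting \(n\to\infty\) before \(k\to\infty\), exactly as in the paper's proof.
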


\begin{remark}[Equilibrium-selection robustness and scope]
Under Theorem~\ref{thm:Lleq1}, let
\(x^{\star}(n)\) and \(y^{\star}(n)\) be any two measurable
equilibrium selections on the same marked network.
The monotone-sandwich argument also gives
\[
\frac1n\sum_{i=1}^n
\Bigl(1\wedge
|x_i^{\star}(n)-y_i^{\star}(n)|\Bigr)
\xrightarrow{\mathbb P}0.
\]
Indeed, for every fixed \(k\), the left-hand side is bounded by
\[
\frac1n\sum_{i=1}^n
\Bigl(1\wedge
\bigl(x_i^{+,k}(n)-x_i^{-,k}(n)\bigr)\Bigr)
\xrightarrow{\mathbb P}
\mathbb E\Bigl[
1\wedge\bigl(x_o^{+,k}-x_o^{-,k}\bigr)
\Bigr],
\]
by Lemma~\ref{lem:lwc-upper-lower-iterates}.
The limiting expectation vanishes as \(k\to\infty\) by
root coalescence and bounded convergence.
Taking first \(n\to\infty\) and then \(k\to\infty\)
therefore proves the claim.
The truncation by one avoids any need for a common bound
or moment assumption on the response envelopes.
Consequently, for every \(\varepsilon>0\), the fraction of
vertices at which the selections differ by more than
\(\varepsilon\) vanishes in probability.
Neither finite-network uniqueness nor convergence of the
maximum coordinatewise difference is implied.

Neither theorem requires connected finite networks or tree-valued
local limits. In Theorem~\ref{thm:L<1}, uniqueness on the
limiting forward component is asserted within the class of bounded
configurations. In Theorem~\ref{thm:Lleq1}, coalescence is
required only at a \(\mathcal P\)-distributed root; global
uniqueness of the limiting configuration is not assumed.
Proposition~\ref{prop:coalescence-conditions} gives a
branching-process criterion for root coalescence, verified for
bounded-kernel limits in
Corollary~\ref{coro:bounded-kernel-irg-coalescence}.
\end{remark}

Both proofs first apply marked local convergence at a fixed
recursion depth and then remove the truncation. Uniform contraction
controls the approximation error in the first regime; root
coalescence closes the monotone bracket in the second.
Figure~\ref{fig:two-proof-diagrams} summarizes these arguments.

\begin{figure}[t]
\centering
\begin{minipage}[t]{0.46\textwidth}
\vspace{0pt}
\centering
\small\textbf{Contractive regime}
\[
\begin{tikzcd}[
ampersand replacement=\&,
column sep=small,row sep=large,
every label/.append style={font=\scriptsize}]
x_{o_n}^{\star}(n)
  \arrow[r,dashed,"\Rightarrow"]
  \& x_o^\star \\
x_{o_n}^{k}(n)
  \arrow[u,"k\to\infty"]
  \arrow[r,"\Rightarrow"']
  \& x_o^{k} \arrow[u,"k\to\infty"']
\end{tikzcd}
\]
\end{minipage}
\hfill
\begin{minipage}[t]{0.50\textwidth}
\vspace{0pt}
\centering
\small\textbf{Monotone nonexpansive regime}
\[
\begin{tikzcd}[
ampersand replacement=\&,
column sep=small,row sep=normal,
every label/.append style={font=\scriptsize}]
x_{o_n}^{+,k}(n)
  \arrow[r,"\Rightarrow"]
  \& x_o^{+,k} \arrow[d,"\downarrow"] \\
x_{o_n}^{\star}(n)
  \arrow[u,phantom,"\leq" description]
  \arrow[d,phantom,"\geq" description]
  \arrow[r,dashed,"\Rightarrow"]
  \& x_o^\star \\
x_{o_n}^{-,k}(n)
  \arrow[r,"\Rightarrow"']
  \& x_o^{-,k} \arrow[u,"\uparrow"']
\end{tikzcd}
\]
\end{minipage}
\caption{Finite-depth approximation of equilibrium.
Solid horizontal arrows denote convergence as \(n\to\infty\)
at fixed \(k\), and vertical arrows denote limits as
\(k\to\infty\). The nonexpansive inequalities hold for every
\(n\) and \(k\). Dashed arrows are the conclusions of the
limit theorems.}
\label{fig:two-proof-diagrams}
\end{figure}
\subsection{Contractive Equilibria and Truncation}

We first establish the finite- and limiting-network bounds
used in Theorem~\ref{thm:L<1}.

\begin{proposition}[Finite-network equilibrium and truncation]
\label{prop:finite-fp-tail}
Under \assref{A1}, every finite realization has a unique
equilibrium \(x^{\star}(n)\), and
\[
\|x^{\star}(n)-x^{k}(n)\|_\infty
\leq
\frac{L^{k+1}}{1-L}\|\epsilon(n)\|_\infty,
\qquad k\geq0.
\]
Under \assref{A2}, it follows that
\[
\sup_{n\geq1}
\mathbb E\bigl[
|x_{o_n}^{\star}(n)-x_{o_n}^{k}(n)|
\bigr]
\leq \frac{M_\epsilon L^{k+1}}{1-L}
\longrightarrow0.
\]
\end{proposition}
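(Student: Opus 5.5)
The plan is to run the standard Banach contraction argument for the finite-network update map, with explicit care over the constants and the iteration index. Define \(T_n:\mathbb R^n\to\mathbb R^n\) by
\[
(T_n x)_i=f_i\Bigl(\sum_j W_{ij}(n)x_j+\epsilon_i(n)\Bigr).
\]
Using \(\operatorname{Lip}(f_i)\le L\) together with row-substochasticity \(\sum_j W_{ij}(n)\le 1\), we get for any \(x,y\)
\[
|(T_nx)_i-(T_ny)_i|\le L\sum_j W_{ij}(n)|x_j-y_j|\le L\|x-y\|_\infty .
\]
So \(T_n\) is an \(L\)-contraction on the complete space \((\mathbb R^n,\|\cdot\|_\infty)\). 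Banach's theorem then gives a unique fixed point \(x^\star(n)\), which is exactly an equilibrium in the sense of \eqref{eq:1}. The iterates satisfy \(x^k(n)=T_n^k(0)\).

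For the truncation bound, the key point is that the first iterate already costs only a factor \(L\). Since \(f_i(0)=0\), we have \(|x_i^1(n)|=|f_i(\epsilon_i(n))|\le L|\epsilon_i(n)|\), hence
\[
\|x^1(n)-x^0(n)\|_\infty\le L\|\epsilon(n)\|_\infty .
\]
The usual a priori estimate then gives
\[
\|x^\star(n)-x^k(n)\|_\infty\le\sum_{m\ge k}\|x^{m+1}(n)-x^m(n)\|_\infty\le\sum_{m\ge k}L^m\|x^1(n)-x^0(n)\|_\infty\le\frac{L^{k+1}}{1-L}\|\epsilon(n)\|_\infty .
\]
This holds deterministically for every realization, and it is the claimed bound.

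For the expectation statement, first bound the root coordinate by the sup norm:
\[
|x^\star_{o_n}(n)-x^k_{o_n}(n)|\le\|x^\star(n)-x^k(n)\|_\infty .
\]
The right-hand side does not depend on the root, so the uniform choice of \(o_n\) plays no role. Taking expectations and applying \assref{A2} gives the bound \(M_\epsilon L^{k+1}/(1-L)\) uniformly in \(n\). This tends to \(0\) because \(L<1\).

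I do not expect a genuine obstacle here. The only points that need care are measurability and the indexing of the constant. For measurability, \(x^\star(n)\) is the pointwise limit of the measurable maps \(T_n^k(0)\), so the expectations are well defined. For the constant, it is the \(f_i(0)=0\) step that produces the exponent \(k+1\) rather than \(k\).
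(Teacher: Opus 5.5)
Your proposal is correct and follows essentially the same route as the paper's proof. Both use the Banach fixed-point theorem for the sup-norm $L$-contraction $T_n$, bound the first increment by $L\|\epsilon(n)\|_\infty$ via $f_i(0)=0$, and sum the geometric increments $L^{j+1}\|\epsilon(n)\|_\infty$ over $j\geq k$, then take expectations using \assref{A2}.
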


\begin{proof}
Define
\((T_nz)_i=f_i\bigl(
\sum_jW_{ij}(n)z_j+\epsilon_i(n)\bigr)\).
The row-sum bound and \assref{A1} give
\(\|T_nz-T_nz'\|_\infty\leq L\|z-z'\|_\infty\).
The Banach fixed-point theorem yields existence, uniqueness,
and convergence of the iterates.
Since \(f_i(0)=0\), the first increment is bounded by
\(L\|\epsilon(n)\|_\infty\). Hence
\[
\|x^{j+1}(n)-x^{j}(n)\|_\infty
\leq L^{j+1}\|\epsilon(n)\|_\infty.
\]
Summing over \(j\geq k\) gives the pathwise bound.
Taking expectations and using \assref{A2} gives the second.
\end{proof}

At a fixed recursion depth, the root output is a continuous local
observable. The next proposition therefore follows directly from
marked local convergence.

\begin{proposition}[Fixed-depth empirical and sampled-root convergence]
\label{prop:finite-iteration-convergence}
Under \assref{A1} and \assref{A3}, set
\(\Phi_k(\xi):=x_o^{k}\).
For every fixed \(k\geq0\) and \(g\in C_b(\mathbb R)\),
\[
\langle\mathcal P_n,g\circ\Phi_k\rangle
\xrightarrow{\mathbb P}
\langle\mathcal P,g\circ\Phi_k\rangle.
\]
Consequently, \(x_{o_n}^{k}(n)\Rightarrow x_o^{k}\).
\end{proposition}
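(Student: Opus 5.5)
The plan is to show that \(\Phi_k\) is a continuous functional on \((\widetilde{\mathcal G}_*,d_{\mathrm{loc}})\), at least on a set of full \(\mathcal P\)-measure containing every finite-network root configuration. The first conclusion then follows from \assref{A3}, because \(g\circ\Phi_k\in C_b(\widetilde{\mathcal G}_*)\). Since \(\mathcal P_n\) is supported on finite networks, the same argument gives \(\langle\mathcal P_n,g\circ\Phi_k\rangle=\frac1n\sum_i g(x_i^k(n))\).

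\emph{Well-definedness.} I first check that \(\Phi_k\) depends only on \([\xi]_k\). By induction on \(j\le k\), the value \(x_v^{j}\) for \(v\in V^+_{k-j}(o)\) uses only the marks \(f_v,\epsilon_v\), the outgoing star of \(v\), and the values \(x_u^{j-1}\) for out-neighbours \(u\in V^+_{k-j+1}(o)\). The depth-\(k\) restriction retains every outgoing edge from vertices at distance \(<k\), so all normalized weights \(W_{vu}\) entering the recursion are determined by \([\xi]_k\). This is exactly the observation made before Figure~\ref{fig:horizontal_exploration_normalization}. The recursion is invariant under marked isomorphism, so \(\Phi_k\) is well defined on the quotient.

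\emph{Continuity.} This step is the content of Lemma~\ref{lem:fixed-depth-continuity}, which I would invoke directly. In outline, suppose \(\xi_m\to\xi\). Then for large \(m\), \([\xi_m]_k\) and \([\xi]_k\) have the same finite rooted shape, and there are isomorphisms \(\phi_m\) under which the vertex marks \((\tau,\epsilon,f)\) and the edge exposures \(C\) converge. Convergence of \(\log C\) makes the finitely many normalizations \(W_{vu}=C_{vu}/\sum_w C_{vw}\) converge, since the denominators are positive finite sums. I then propagate convergence up the tree by induction on depth. If the arguments \(z_m\to z\), then \(f_m(z_m)\to f(z)\). This holds because \(|f_m(z_m)-f(z)|\le \sup_{|y|\le q}|f_m-f|(y)+L|z_m-z|\) with \(q\) exceeding \(\sup_m|z_m|\), and \(d_{\mathrm c}\)-convergence gives locally uniform convergence. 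Finitely many sums and products of convergent quantities converge, so \(x_o^{k}(\xi_m)\to x_o^k(\xi)\). I expect this to be the main obstacle. The delicate point is that continuity needs the full outgoing star, so that the normalizing denominators are exactly captured. It also needs the response metric to yield joint continuity of evaluation \((f,z)\mapsto f(z)\), which the local-uniform topology and the uniform Lipschitz bound \(L\) supply. \assref{A1} guarantees that the relevant marks a.s. lie in \(\mathsf F_{\mathrm c}\).

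\emph{Sampled-root statement.} Conditionally on the marked network, \(\mathbb E[g(x^k_{o_n}(n))\mid G_n,\mathcal M(G_n)]=\langle\mathcal P_n,g\circ\Phi_k\rangle\). This conditional expectation is bounded by \(\|g\|_\infty\) and converges in probability to \(\langle\mathcal P,g\circ\Phi_k\rangle=\mathbb E[g(x_o^k)]\). Bounded convergence therefore gives \(\mathbb E[g(x^k_{o_n}(n))]\to\mathbb E[g(x^k_o)]\) for every \(g\in C_b(\mathbb R)\), which is \(x^k_{o_n}(n)\Rightarrow x^k_o\).
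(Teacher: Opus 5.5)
Your proposal is correct and follows the paper's proof essentially verbatim. You use Lemma~\ref{lem:fixed-depth-continuity} to put \(g\circ\Phi_k\) in \(C_b(\widetilde{\mathcal G}_*)\) so that \assref{A3} gives the empirical limit, and then bounded convergence with conditional uniform-root sampling gives \(x_{o_n}^{k}(n)\Rightarrow x_o^{k}\). Your hedge about continuity only on a full-measure set is unnecessary, since under \assref{A1} every response lies in \(\mathsf F_{\mathrm c}\) and the lemma gives continuity on the whole contractive space.
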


\begin{proof}
Lemma~\ref{lem:fixed-depth-continuity} gives
\(g\circ\Phi_k\in C_b(\widetilde{\mathcal G}_*)\), so
\assref{A3} proves the first assertion.
The empirical integrals are uniformly bounded, hence also
converge in \(L^1\). Conditional uniform-root sampling gives
\[
\mathbb E[g(x_{o_n}^{k}(n))]
=
\mathbb E\langle\mathcal P_n,g\circ\Phi_k\rangle
\longrightarrow
\mathbb E[g(x_o^{k})].
\]
\end{proof}

To apply the same contraction argument on the limiting forward
component, we first transfer the shock-envelope bound to the limit.

\begin{lemma}[Limiting shock envelope]
\label{lem:limiting-shock-envelope}
Under \assref{A2} and \assref{A3}, the limiting forward
component satisfies
\[
\mathbb E[\|\epsilon\|_\infty]\leq M_\epsilon,
\qquad
\|\epsilon\|_\infty
:=\sup_{r\geq0}\max_{v\in V_r^+(o)}|\epsilon_v|.
\]
In particular, a.s.,  \(\|\epsilon\|_\infty<\infty\).
\end{lemma}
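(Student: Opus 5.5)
We prove the bound at each fixed depth \(r\) and for each truncation level, and then pass to monotone limits. For \(r\geq0\) and \(M>0\), set
\[
S_r(\xi):=M\wedge\max_{v\in V_r^+(o)}|\epsilon_v|.
\]
This is a local observable of the depth-\(r\) restriction. If \([\xi_m]_r\to[\xi]_r\), then eventually the restrictions have the same rooted shape, and under suitable isomorphisms the vertex marks converge. The shock coordinate is continuous in the product metric on \(\mathsf M^V\). Hence the maximum over the finitely many vertices of \(V_r^+(o)\) converges, so \(S_r\in C_b(\widetilde{\mathcal G}_*)\). The finite vertex set is guaranteed because forward local finiteness makes \(V_r^+(o)\) finite. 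If needed, I would cite the continuity facts behind Lemma~\ref{lem:fixed-depth-continuity}.

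By \assref{A3}, \(\langle\mathcal P_n,S_r\rangle\to\langle\mathcal P,S_r\rangle\) in probability. Since \(0\leq S_r\leq M\), bounded convergence upgrades this to convergence of expectations:
\[
\mathbb E\langle\mathcal P_n,S_r\rangle\to\langle\mathcal P,S_r\rangle=\mathbb E[S_r(\xi)].
\]
Here the limit is deterministic because \(\mathcal P\) is.

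On the finite network, every vertex of \(V_r^+(i)\) lies in \([n]\). Therefore \(S_r([G_n,i,\mathcal M(G_n)])\leq\|\epsilon(n)\|_\infty\) for every \(i\), and averaging over \(i\) gives
\[
\mathbb E\langle\mathcal P_n,S_r\rangle\leq\mathbb E\|\epsilon(n)\|_\infty\leq M_\epsilon
\]
by \assref{A2}. Combining with the previous step yields \(\mathbb E\bigl[M\wedge\max_{v\in V_r^+(o)}|\epsilon_v|\bigr]\leq M_\epsilon\) for all \(r\) and \(M\). The quantity inside the expectation is nondecreasing in both \(r\) (the sets \(V_r^+(o)\) are nested) and \(M\), and it converges to \(\|\epsilon\|_\infty\) as \(r,M\to\infty\), since \(V(G)=\bigcup_rV_r^+(o)\) for the forward component. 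Monotone convergence then gives \(\mathbb E\|\epsilon\|_\infty\leq M_\epsilon\), so \(\|\epsilon\|_\infty<\infty\) a.s.

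The only delicate step is the continuity of \(S_r\). Both the cap \(M\) and the finite depth \(r\) are needed so that the observable is a bounded continuous function, and hence covered by Definition~\ref{def:lwc}. Passing directly to the unbounded supremum would instead require uniform integrability, which \assref{A2} does not supply. The monotone truncation argument above avoids that issue entirely.
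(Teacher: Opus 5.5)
Your proposal is correct and follows essentially the same route as the paper. The paper likewise applies local convergence to the truncated observable $K\wedge\max_{v\in V_r^+(o)}|\epsilon_v|$, bounds its empirical average by $\|\epsilon(n)\|_\infty$, uses boundedness to pass the expectation to the limit, and concludes by monotone convergence in $K$ and then $r$.
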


\begin{proof}
For \(r\geq0\) and \(K>0\), the functional
\(H_{r,K}(\xi):=K\wedge\max_{v\in V_r^+(o)}|\epsilon_v|\)
is bounded and continuous. Moreover,
\(\langle\mathcal P_n,H_{r,K}\rangle
\leq\|\epsilon(n)\|_\infty\).
Local convergence and boundedness give
\[
\langle\mathcal P,H_{r,K}\rangle
=
\lim_{n\to\infty}
\mathbb E\langle\mathcal P_n,H_{r,K}\rangle
\leq M_\epsilon.
\]
Letting first \(K\to\infty\) and then \(r\to\infty\),
monotone convergence proves the claim.
\end{proof}

The limiting shock field is thus bounded a.s., so the
update acts on the Banach space of bounded vertex configurations.

\begin{proposition}[Limiting equilibrium and truncation]
\label{prop:infinite-iteration}
Under \assref{A1}--\assref{A3}, the limiting update has,
a.s., a unique fixed-point
\(x^\star\in\ell^\infty(V(G))\). Its zero-initialized
iterates satisfy
\[
\|x^\star-x^{k}\|_\infty
\leq\frac{L^{k+1}}{1-L}\|\epsilon\|_\infty,
\qquad
\mathbb E|x_o^\star-x_o^{k}|
\leq\frac{M_\epsilon L^{k+1}}{1-L}.
\]
Thus \(x_o^{k}\to x_o^\star\) a.s. and in \(L^1\).
\end{proposition}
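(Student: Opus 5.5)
We mirror the proof of Proposition~\ref{prop:finite-fp-tail}, now on the Banach space \(\ell^\infty(V(G))\) of bounded configurations on the limiting forward component. First we fix a full-measure event. By Lemma~\ref{lem:limiting-shock-envelope}, \(\|\epsilon\|_\infty<\infty\) a.s. By \assref{A1}, a.s.\ every vertex of \(V(G)\) has a continuous nondecreasing response with \(f_v(0)=0\) and \(\operatorname{Lip}(f_v)\leq L\). The forward component is a countable union of the finite sets \(V_r^+(o)\), because \(G\) is forward-locally-finite. The "almost surely at every vertex" clause of \assref{A1} is therefore a single event of probability one, and we work on the intersection of these two events.

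On that event we define
\[
(Tz)_v:=f_v\Bigl(\sum_{u:(v,u)\in E(G)}W_{vu}z_u+\epsilon_v\Bigr),
\qquad z\in\ell^\infty(V(G)).
\]
Each sum is finite, since the out-degree is finite, and the normalized weights satisfy \(\sum_u W_{vu}\in\{0,1\}\). Using \(f_v(0)=0\) and the Lipschitz bound, we get
\[
|(Tz)_v|\leq L\bigl(\|z\|_\infty+\|\epsilon\|_\infty\bigr),
\]
so \(T\) maps \(\ell^\infty(V(G))\) into itself. The same computation as in the finite case gives \(\|Tz-Tz'\|_\infty\leq L\|z-z'\|_\infty\). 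Banach's fixed-point theorem then yields a unique bounded fixed point \(x^\star\) and convergence \(x^k=T^k0\to x^\star\) in sup norm. These iterates coincide with the recursion \eqref{eq:limit_truncated_recursion}.

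For the quantitative bound, note that \(\|x^1-x^0\|_\infty=\sup_v|f_v(\epsilon_v)|\leq L\|\epsilon\|_\infty\). By induction, \(\|x^{j+1}-x^j\|_\infty\leq L^{j+1}\|\epsilon\|_\infty\). Summing the geometric tail over \(j\geq k\) gives
\[
\|x^\star-x^k\|_\infty\leq\frac{L^{k+1}}{1-L}\|\epsilon\|_\infty .
\]
Evaluating at the root and taking expectations with Lemma~\ref{lem:limiting-shock-envelope} gives the \(L^1\) bound \(M_\epsilon L^{k+1}/(1-L)\). Almost sure convergence follows directly from the pathwise bound, because \(\|\epsilon\|_\infty<\infty\) a.s.

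The only delicate point is measurability. We need \(x_o^\star\) to be a random variable, so that the expectation makes sense. This holds because each \(x_o^k=\Phi_k(\xi)\) is continuous in \(\xi\) by Lemma~\ref{lem:fixed-depth-continuity}, and \(x_o^\star\) is their a.s.\ pointwise limit. We define \(x_o^\star:=\limsup_k x_o^k\) off the null set. The second delicate point is ensuring that \assref{A1} holds simultaneously at all vertices; countability of \(V(G)\) settles this. Beyond these two points the argument is routine.
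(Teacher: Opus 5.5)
Your proposal is correct and follows the paper's proof essentially line for line. It applies Banach's fixed-point theorem to the limiting update on \(\ell^\infty(V(G))\), using Lemma~\ref{lem:limiting-shock-envelope} for a.s.\ boundedness of the shocks, reuses the telescoping estimate from the finite case, and obtains measurability of \(x_o^\star\) as a pointwise limit of the continuous functionals \(\Phi_k\); your remark that countability of the forward component makes the almost-sure clause of \assref{A1} a single full-measure event is a harmless clarification the paper leaves implicit.
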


\begin{proof}
By Lemma~\ref{lem:limiting-shock-envelope},
\(\epsilon\in\ell^\infty(V(G))\) a.s.
Define
\((T_Gz)_v=f_v\bigl(\sum_uW_{vu}z_u+\epsilon_v\bigr)\).
Then
\(\|T_Gz\|_\infty\leq L(\|z\|_\infty+\|\epsilon\|_\infty)\)
and
\(\|T_Gz-T_Gz'\|_\infty\leq L\|z-z'\|_\infty\).
The Banach fixed-point theorem therefore applies on
\(\ell^\infty(V(G))\). The same telescoping estimate as in
Proposition~\ref{prop:finite-fp-tail} gives the pathwise bound,
and the shock-envelope lemma gives the \(L^1\) bound.
The root limit is measurable as a pointwise limit of the
continuous finite-depth functionals.
\end{proof}

We now combine fixed-depth convergence with the two truncation
bounds to prove the contractive limit theorem.

\begin{proof}[Proof of Theorem~\ref{thm:L<1}]
The preceding propositions give the finite and limiting
equilibria. Fix a bounded Lipschitz \(\varphi\) and set
\(a_k:=L^{k+1}/(1-L)\). The truncation estimates imply
\begin{align*}
\mathbb E\Bigl|
\frac1n\sum_{i=1}^n\varphi(x_i^{\star}(n))
-\mathbb E[\varphi(x_o^\star)]
\Bigr|\leq
2\operatorname{Lip}(\varphi)a_kM_\epsilon
+
\mathbb E\Bigl|
\langle\mathcal P_n,\varphi\circ\Phi_k\rangle
-\langle\mathcal P,\varphi\circ\Phi_k\rangle
\Bigr|.
\end{align*}
For fixed \(k\), the final term tends to zero by
Proposition~\ref{prop:finite-iteration-convergence} and
uniform boundedness. Letting \(k\to\infty\) proves
\(L^1\) convergence, and hence convergence in probability,
of the empirical averages. Finally,
\[
\mathbb E[\varphi(x_{o_n}^{\star}(n))]
=
\mathbb E\Bigl[\frac1n\sum_{i=1}^n
\varphi(x_i^{\star}(n))\Bigr]
\longrightarrow\mathbb E[\varphi(x_o^\star)],
\]
which proves sampled-root convergence.
\end{proof}

\subsection{Monotone Bounds and Equilibrium Selection}

Without strict contraction, we compare equilibria with lower and
upper iterations. Their construction uses the coordinatewise order,
and their convergence will provide the bounds needed for
Theorem~\ref{thm:Lleq1}.

Set \(B_i(n):=\mathfrak b(f_i)\) and
\(B_v:=\mathfrak b(f_v)\).
The product intervals
\([0,\mathbf B(n)]:=\prod_{i=1}^n[0,B_i(n)]\)
and
\([0,\mathbf B_G]:=\prod_{v\in V(G)}[0,B_v]\)
are complete lattices under coordinatewise order. We write \(x\preceq y\) if \(x_v\leq y_v\) at every coordinate.
Under \assref{B2}, the corresponding update maps preserve
these intervals and are order preserving. Tarski's theorem \citep{Tarski1955} therefore gives least and
greatest fixed points.
For either a finite graph or the limiting forward component,
write \(T_G\) for the update and
\(\mathcal E_B(G):=\{y\in[0,\mathbf B_G]:T_Gy=y\}\).
The extremal iterations are
\begin{equation}
\label{eq:type-dependent-extremal-iterates}
x_v^{-,0}=0,
\qquad x_v^{+,0}=B_v,
\qquad x^{\pm,k+1}=T_G(x^{\pm,k}).
\end{equation}
On \(G_n\), denote them by \(x^{\pm,k}(n)\).

\begin{proposition}[Extremal monotone iteration]
\label{prop:infinite-monotone-lattice}
Let \(G\) be finite or forward locally finite, and suppose
\(T_G:[0,\mathbf B_G]\to[0,\mathbf B_G]\) is order
preserving with continuous vertex responses.
Then
\[
x^{-,k}\uparrow x^{-,\star},
\qquad x^{+,k}\downarrow x^{+,\star}
\]
coordinatewise, where \(x^{-,\star}\) and \(x^{+,\star}\) are the least and greatest fixed-points. Every equilibrium \(y\) satisfies
\(x^{-,k}\preceq y\preceq x^{+,k}\) for all \(k\). For finite \(G\), convergence also holds in every norm.
Equilibria here are taken in \([0,\mathbf B_G]\), which contains
every fixed-point of the bounded nonnegative response map.
\end{proposition}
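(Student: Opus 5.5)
We prove the proposition by standard monotone (Kleene-type) iteration on the complete lattice \([0,\mathbf B_G]\). The one non-routine point is that continuity of \(T_G\) along monotone sequences comes from local finiteness: each coordinate of \(T_G\) involves only finitely many inputs.

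\emph{Monotonicity of the iterates.} Since \(0\) is the least element and \(\mathbf B_G\) the greatest, we have \(x^{-,0}\preceq T_G(x^{-,0})=x^{-,1}\) and \(x^{+,1}\preceq x^{+,0}\). Because \(T_G\) is order preserving, induction gives \(x^{-,k}\preceq x^{-,k+1}\) and \(x^{+,k+1}\preceq x^{+,k}\). Similarly \(x^{-,0}\preceq x^{+,0}\) propagates to \(x^{-,k}\preceq x^{+,k}\).

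\emph{Sandwich of equilibria.} Let \(y\in\mathcal E_B(G)\) be any equilibrium. Then \(0\preceq y\preceq\mathbf B_G\). If \(x^{-,k}\preceq y\preceq x^{+,k}\), applying \(T_G\) gives \(x^{-,k+1}\preceq T_Gy=y\preceq x^{+,k+1}\). By induction the bracket holds for every \(k\). For the final remark of the statement, any fixed point \(y\) of the response map satisfies \(y_v=f_v(\cdot)\in[0,B_v]\), so it lies in the interval automatically.

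\emph{Passage to the limit.} Each coordinate sequence \(x^{-,k}_v\) is nondecreasing and bounded by \(B_v\), so it converges to some \(x^{-,\star}_v\in[0,B_v]\). Likewise \(x^{+,k}_v\downarrow x^{+,\star}_v\). To show these limits are fixed points, fix \(v\). By forward local finiteness, \((T_Gz)_v=f_v\bigl(\sum_{u:(v,u)\in E}W_{vu}z_u+\epsilon_v\bigr)\) is a finite sum. Hence \(\sum_u W_{vu}x^{\pm,k}_u\to\sum_uW_{vu}x^{\pm,\star}_u\), and continuity of \(f_v\) yields \(x^{\pm,\star}_v=\lim_k x^{\pm,k+1}_v=(T_Gx^{\pm,\star})_v\). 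So both limits are equilibria.

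\emph{Extremality.} The sandwich inequality passes to the limit, giving \(x^{-,\star}\preceq y\preceq x^{+,\star}\) for every equilibrium \(y\). Thus \(x^{-,\star}\) and \(x^{+,\star}\) are the least and greatest fixed points, consistent with Tarski's theorem.

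\emph{Finite case.} For finite \(G\), coordinatewise convergence in \(\mathbb R^n\) is equivalent to convergence in any norm.

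The only step needing care is the interchange of limit and update on an infinite \(G\). It is handled by finiteness of the out-star, so no uniform or dominated-convergence argument is required. Also, no bound on in-degrees is needed, since coordinate \(v\) reads only its outgoing neighbours.
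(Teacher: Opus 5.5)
Your proof is correct and follows essentially the same route as the paper's: you order the extremal iterates monotonically inside \([0,\mathbf B_G]\), take coordinatewise monotone limits, pass to the limit in each update using finiteness of the out-star and continuity of \(f_v\), and get extremality by letting the inductive sandwich \(x^{-,k}\preceq y\preceq x^{+,k}\) pass to the limit. The only addition beyond the paper's proof is your explicit check that every fixed point automatically lies in \([0,\mathbf B_G]\) and the norm-equivalence remark for finite \(G\), both of which are fine.
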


\begin{proof}
Since \(T_G0\succeq 0\), \(T_G\mathbf B_G\preceq\mathbf B_G\),
and \(T_G\) is order preserving,
\[
0\preceq x^{-,k}\preceq x^{-,k+1}
\preceq x^{+,k+1}\preceq x^{+,k}\preceq\mathbf B_G.
\]
Both coordinatewise limits therefore exist.
Each update uses finitely many neighbors, so continuity of
\(f_v\) allows passage to the limit and gives
\(T_Gx^{-,\star}=x^{-,\star}\) and
\(T_Gx^{+,\star}=x^{+,\star}\).
Any equilibrium lies between the initial vectors and hence,
by induction, between every pair of iterates. Passing to the
limit proves extremality.
\end{proof}

At each fixed depth, both bounding iterates are continuous local
observables, even though their limits need not depend continuously
on the full marked network.

\begin{lemma}[Local convergence of monotone bounds]
\label{lem:lwc-upper-lower-iterates}
Under \assref{B1} and \assref{B2}, for every fixed
\(k\geq0\) and \(\psi\in C_b(\mathbb R^2)\),
\[
\frac1n\sum_{i=1}^n
\psi\bigl(x_i^{-,k}(n),x_i^{+,k}(n)\bigr)
\xrightarrow{\mathbb P}
\mathbb E\bigl[\psi(x_o^{-,k},x_o^{+,k})\bigr].
\]
Consequently,
\(\bigl(x_{o_n}^{-,k}(n),x_{o_n}^{+,k}(n)\bigr)
\Rightarrow\bigl(x_o^{-,k},x_o^{+,k}\bigr)\).
\end{lemma}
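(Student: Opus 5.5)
The argument mirrors Proposition~\ref{prop:finite-iteration-convergence}. Define on \(\widetilde{\mathcal G}_*\) the fixed-depth functional
\[
\Psi_k(\xi):=\psi\bigl(x_o^{-,k},x_o^{+,k}\bigr),
\]
where the two root iterates are computed from the extremal initializations \(x_v^{-,0}=0\) and \(x_v^{+,0}=\mathfrak b(f_v)\). Since \(\langle\mathcal P_n,\Psi_k\rangle\) equals the empirical average in the statement, it is enough to show \(\Psi_k\in C_b(\widetilde{\mathcal G}_*)\). Assumption \assref{B1} then gives the first assertion directly. The sampled-root assertion follows because the empirical averages are bounded by \(\|\psi\|_\infty\), so convergence in probability upgrades to \(L^1\) convergence. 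Conditional uniform sampling of \(o_n\) gives \(\mathbb E[\psi(x_{o_n}^{-,k}(n),x_{o_n}^{+,k}(n))]=\mathbb E\langle\mathcal P_n,\Psi_k\rangle\), and this converges to \(\mathbb E[\psi(x_o^{-,k},x_o^{+,k})]\).

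\textbf{Locality.} An induction on \(k\) shows that \(x_v^{\pm,k}\) depends only on the depth-\(k\) outgoing restriction at \(v\). Consequently the root pair at step \(k\) is a function of \([\xi]_k\). It uses the marks \((\epsilon_v,f_v)\) on \(V_k^+(o)\), the complete outgoing stars at vertices of distance less than \(k\) (these determine the normalized weights \(W_{vu}\)), and the envelopes \(\mathfrak b(f_v)\) at vertices of distance exactly \(k\). The envelopes at distance \(k\) enter only through the upper initialization. The vertex marks at depth \(k\) are retained by the restriction, so this dependence is legitimate. This part is bookkeeping.

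\textbf{Continuity (main step).} Suppose \(\xi_m\to\xi\). For fixed \(k\), eventually \([\xi_m]_k\) and \([\xi]_k\) have the same rooted shape, and there are isomorphisms \(\phi_m\) along which all vertex and edge marks converge. I would then argue inductively over depth, from the leaves of the restriction toward the root:
\begin{enumerate}
\item Edge marks \(C_{vu}\) converge in \((0,\infty)\) with finitely many terms in each star, so the normalized weights \(W_{vu}\) converge.
\item Shocks converge in \(\mathbb R\), and responses converge uniformly in \(d_{\mathrm{ne}}\).
\item The initial values \(\mathfrak b(f_v)\) converge by the continuity of \(\mathfrak b\) noted earlier.
\item For the inductive step, apply \(|f^m(z_m)-f(z)|\le\|f^m-f\|_\infty+|z_m-z|\), which uses the \(1\)-Lipschitz property from \assref{B2}.
\end{enumerate}
This yields convergence of the root iterates, and hence \(\Psi_k(\xi_m)\to\Psi_k(\xi)\) by continuity of \(\psi\). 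Boundedness of \(\Psi_k\) is immediate from \(\psi\in C_b\). The step I expect to require the most care is this continuity, specifically the upper iterate's dependence on \(\mathfrak b(f_v)\) at the boundary depth. I would handle it by invoking Lemma~\ref{lem:fixed-depth-continuity} if its statement covers arbitrary continuous initializations given by vertex-mark functionals. Otherwise I would run the induction above directly, treating \(\mathfrak b\circ f\) as a continuous function of the vertex mark.
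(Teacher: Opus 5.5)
Your proposal is correct and follows essentially the same route as the paper. The paper also obtains continuity of \(\Phi_k^\pm\) from Lemma~\ref{lem:fixed-depth-continuity}, whose Case~2 treats exactly the initializations \(x_v^{-,0}=0\) and \(x_v^{+,0}=\mathfrak b(f_v)\) by the induction you sketch. It then applies \assref{B1} to the bounded continuous observable \(\psi(\Phi_k^-,\Phi_k^+)\), and uses boundedness together with conditional uniform-root sampling for the sampled-root conclusion.
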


\begin{proof}
By Lemma~\ref{lem:fixed-depth-continuity}, the root maps
\(\Phi_k^\pm(\xi)=x_o^{\pm,k}\) are continuous.
Thus \(\psi(\Phi_k^-,\Phi_k^+)\) is a bounded continuous
rooted observable, and \assref{B1} gives the empirical limit.
Taking expectations and using conditional uniform-root
sampling gives the sampled-root conclusion.
\end{proof}

The remaining step is to let the recursion depth increase.
Root coalescence makes the limiting bracket collapse, while
bounded test functions control the error without a common
envelope bound.

\begin{proof}[Proof of Theorem~\ref{thm:Lleq1}]
Proposition~\ref{prop:infinite-monotone-lattice} gives finite
extremal equilibria; they are measurable limits of iterates.
It also gives the limiting extremal configurations, whose
root values coincide under \assref{B3}.
For any measurable finite-network equilibrium selection,
\(x_i^{-,k}(n)\leq x_i^{\star}(n)
\leq x_i^{+,k}(n)\).
Fix a bounded Lipschitz \(\varphi\) and define the bounded
continuous rooted functional
\[
\Psi_{\varphi,k}(\xi)
:=
\min\Bigl\{
\operatorname{Lip}(\varphi)
\bigl(\Phi_k^+(\xi)-\Phi_k^-(\xi)\bigr),
2\|\varphi\|_\infty
\Bigr\}.
\]
The sandwich and boundedness of \(\varphi\) give
\begin{equation}
\label{eq:finite-iterate-error}
\Bigl|
\frac1n\sum_{i=1}^n\varphi(x_i^{\star}(n))
-\langle\mathcal P_n,\varphi\circ\Phi_k^-\rangle
\Bigr|
\leq\langle\mathcal P_n,\Psi_{\varphi,k}\rangle.
\end{equation}
For fixed \(k\), both rooted integrals on the right and
in the approximation converge in probability to their
\(\mathcal P\)-integrals. Root coalescence and bounded
convergence imply
\[
\langle\mathcal P,\Psi_{\varphi,k}\rangle\longrightarrow0,
\qquad
\langle\mathcal P,\varphi\circ\Phi_k^-\rangle
\longrightarrow\mathbb E[\varphi(x_o^\star)].
\]
First letting \(n\to\infty\) at fixed \(k\), then
\(k\to\infty\), proves empirical convergence in probability.
The truncation by \(2\|\varphi\|_\infty\) avoids any need
for uniform bounds or moment assumptions on the envelopes.
Boundedness of the empirical averages upgrades convergence
to \(L^1\), and conditional uniform-root sampling gives
\(x_{o_n}^{\star}(n)\Rightarrow x_o^\star\).
\end{proof}

\subsection{Joint and Type-Conditional Limits}

The same arguments retain the root type together with its
equilibrium output. This joint limit yields the type-conditional
results below and allows type-dependent losses in the risk analysis.

\begin{corollary}[Joint type--output convergence]
\label{coro:joint-type-output-convergence}
Under either Theorem~\ref{thm:L<1} or
Theorem~\ref{thm:Lleq1}, and for any measurable equilibrium
selection in the nonexpansive case,
\[
\frac1n\sum_{i=1}^n
\delta_{(\tau_i(n),x_i^{\star}(n))}
\overset{\mathbb P}{\Rightarrow}
\operatorname{Law}(\tau_o,x_o^\star)
\qquad\text{on }\mathcal T\times\mathbb R.
\]
\end{corollary}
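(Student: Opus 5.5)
The plan is to reduce joint weak convergence in probability on \(\mathcal T\times\mathbb R\) to convergence of empirical averages of a convergence-determining class of test functions, and then repeat the truncation arguments of Theorems~\ref{thm:L<1} and~\ref{thm:Lleq1} with the root type carried along. Since \(\mathcal T\times\mathbb R\) is Polish, weak convergence of probability measures is metrized by the bounded-Lipschitz distance. Moreover, convergence in probability of \(\langle\nu_n,h\rangle\) for every bounded Lipschitz \(h\) (with respect to the product metric \(d_{\mathcal T}+|\cdot|\)) implies \(\nu_n\overset{\mathbb P}{\Rightarrow}\nu\) in the sense of Definition~\ref{def:lwc}. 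This follows by the standard subsequence argument along a countable convergence-determining family, which exists by separability, or by the portmanteau theorem applied to bounded Lipschitz approximations of \(C_b\) functions. So it suffices to fix a bounded Lipschitz \(h:\mathcal T\times\mathbb R\to\mathbb R\) and show
\[
\frac1n\sum_{i=1}^n h\bigl(\tau_i(n),x_i^\star(n)\bigr)\xrightarrow{\mathbb P}\mathbb E\bigl[h(\tau_o,x_o^\star)\bigr].
\]

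At fixed depth, the functional \(\xi\mapsto h(\vartheta(\xi),\Phi_k(\xi))\), or \(h(\vartheta(\xi),\Phi_k^-(\xi))\) in the nonexpansive case, is bounded and continuous on \(\widetilde{\mathcal G}_*\). Indeed, \(\vartheta\) is continuous, as noted in Section~\ref{sec:marked_loc_top}, and \(\Phi_k,\Phi_k^\pm\) are continuous by Lemma~\ref{lem:fixed-depth-continuity}. Hence \assref{A3}, respectively \assref{B1}, gives convergence in probability of the fixed-depth empirical averages to their \(\mathcal P\)-integrals.

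Next we remove the truncation, using the fact that \(h\) is Lipschitz in its second argument uniformly in the type. In the contractive case,
\[
\bigl|h(\tau_i,x_i^\star(n))-h(\tau_i,x_i^k(n))\bigr|\le \operatorname{Lip}(h)\,|x_i^\star(n)-x_i^k(n)|.
\]
Proposition~\ref{prop:finite-fp-tail} bounds the expectation of the empirical average of the right-hand side by \(\operatorname{Lip}(h)M_\epsilon a_k\), and Proposition~\ref{prop:infinite-iteration} gives the same bound on the limit side. The three-term \(L^1\) estimate from the proof of Theorem~\ref{thm:L<1} then goes through verbatim.

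In the nonexpansive case, the monotone sandwich \(x_i^{-,k}(n)\le x_i^\star(n)\le x_i^{+,k}(n)\) from Proposition~\ref{prop:infinite-monotone-lattice} gives
\[
\bigl|h(\tau_i,x_i^\star(n))-h(\tau_i,x_i^{-,k}(n))\bigr|\le\Psi_{h,k}(\xi_i),
\qquad
\Psi_{h,k}:=\min\bigl\{\operatorname{Lip}(h)(\Phi_k^+-\Phi_k^-),\,2\|h\|_\infty\bigr\}.
\]
Here \(\Psi_{h,k}\) is bounded and continuous, so its empirical average converges in probability to \(\langle\mathcal P,\Psi_{h,k}\rangle\). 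This limit tends to \(0\) as \(k\to\infty\) by \assref{B3} and bounded convergence. Likewise \(\mathbb E[h(\tau_o,x_o^{-,k})]\to\mathbb E[h(\tau_o,x_o^\star)]\) by bounded convergence. Taking \(n\to\infty\) first and then \(k\to\infty\) gives the claim for every measurable selection.

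The main obstacle is the first reduction, from bounded Lipschitz test functions on \(\mathcal T\times\mathbb R\) to the full definition of weak convergence in probability. Unlike the real-valued case, there is no CDF argument to fall back on. I would handle it by fixing a countable family \(\{h_j\}\) of bounded Lipschitz functions that determines weak convergence on the separable space \(\mathcal T\times\mathbb R\). Given any subsequence, a diagonal argument extracts a further subsequence along which all \(h_j\)-averages converge almost surely. Along it the empirical measures converge weakly almost surely, so every \(C_b\) average converges almost surely. Since this holds for every subsequence, each \(C_b\) average converges in probability.
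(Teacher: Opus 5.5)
Your proposal is correct and follows essentially the same route as the paper. The shared steps are: bounded Lipschitz test functions under the product metric; fixed-depth continuity of \(\xi\mapsto h(\tau_o,\Phi_k(\xi))\) or \(h(\tau_o,\Phi_k^-(\xi))\); the contraction or capped monotone-sandwich truncation bounds with the type held fixed; and finally a countable bounded-Lipschitz convergence-determining class combined with the subsequence criterion.
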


\begin{proof}
Take a bounded Lipschitz \(g:\mathcal T\times\mathbb R\to\mathbb R\)
under the product metric \(d_{\mathcal T}(s,t)+|z-z'|\).
For fixed \(k\), the maps
\(\xi\mapsto g(\tau_o,\Phi_k(\xi))\) and
\(\xi\mapsto g(\tau_o,\Phi_k^-(\xi))\)
are bounded continuous local observables in their respective
regimes. Since the type is unchanged by truncation,
\[
|g(t,z)-g(t,z')|
\leq\min\{\operatorname{Lip}(g)|z-z'|,2\|g\|_\infty\}.
\]
The truncation or sandwich arguments above therefore yield
\(n^{-1}\sum_i g(\tau_i(n),x_i^{\star}(n))
\xrightarrow{\mathbb P}\mathbb E[g(\tau_o,x_o^\star)]\).
A countable bounded-Lipschitz convergence-determining class
on the Polish space \(\mathcal T\times\mathbb R\), together
with the subsequence criterion for convergence in probability,
gives the measure-valued conclusion.
\end{proof}

Projecting onto the output coordinate gives the empirical output
law and, at continuity points, its tail probabilities.

\begin{corollary}[Empirical output-law and tail convergence]
\label{cor:empirical-tail-convergence}
Under the same hypotheses,
\[
\frac1n\sum_{i=1}^n\delta_{x_i^{\star}(n)}
\overset{\mathbb P}{\Rightarrow}
\operatorname{Law}(x_o^\star).
\]
At every continuity point \(r\) of the limiting output law,
\[
\frac1n\sum_{i=1}^n\mathbf1_{\{x_i^{\star}(n)>r\}}
\xrightarrow{\mathbb P}\mathbb P(x_o^\star>r).
\]
\end{corollary}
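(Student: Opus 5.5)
The plan is to derive both assertions from Corollary~\ref{coro:joint-type-output-convergence} by projection, followed by a standard Portmanteau argument for tails.

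\emph{Output law.} The projection \(\pi:\mathcal T\times\mathbb R\to\mathbb R\), \((t,z)\mapsto z\), is continuous. For \(g\in C_b(\mathbb R)\), the function \(g\circ\pi\) lies in \(C_b(\mathcal T\times\mathbb R)\). Hence
\[
\frac1n\sum_{i=1}^n g\bigl(x_i^{\star}(n)\bigr)
=
\Bigl\langle \frac1n\sum_{i=1}^n\delta_{(\tau_i(n),x_i^{\star}(n))},\,g\circ\pi\Bigr\rangle
\xrightarrow{\mathbb P}
\mathbb E[g(x_o^\star)].
\]
Even without invoking the measure-valued corollary, the scalar conclusion for bounded Lipschitz \(g\) is already stated in Theorems~\ref{thm:L<1} and~\ref{thm:Lleq1}. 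To obtain convergence of the random measure in probability, I take a countable convergence-determining family \(\{g_m\}\) of bounded Lipschitz functions on \(\mathbb R\). Given any subsequence, a diagonal argument extracts a further subsequence along which \(\langle\nu_n,g_m\rangle\to\langle\nu,g_m\rangle\) almost surely for every \(m\) simultaneously, where \(\nu_n\) is the empirical output law and \(\nu=\operatorname{Law}(x_o^\star)\). Along that subsequence, \(\nu_n\Rightarrow\nu\) almost surely. The subsequence criterion then gives \(\nu_n\overset{\mathbb P}{\Rightarrow}\nu\), in the sense that \(\langle\nu_n,h\rangle\xrightarrow{\mathbb P}\langle\nu,h\rangle\) for every \(h\in C_b(\mathbb R)\).

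\emph{Tails.} Fix a continuity point \(r\), so that \(\mathbb P(x_o^\star=r)=0\), and let \(\eta>0\). For \(\delta>0\), I define piecewise-linear bounded Lipschitz functions
\[
\ell_\delta \;\le\; \mathbf 1_{(r,\infty)} \;\le\; u_\delta,
\]
with \(\ell_\delta=1\) on \([r+\delta,\infty)\), \(\ell_\delta=0\) on \((-\infty,r]\), \(u_\delta=1\) on \([r,\infty)\), and \(u_\delta=0\) on \((-\infty,r-\delta]\). Their gap satisfies
\[
\mathbb E\bigl[u_\delta(x_o^\star)-\ell_\delta(x_o^\star)\bigr]
\le
\mathbb P\bigl(|x_o^\star-r|\le\delta\bigr),
\]
which tends to \(\mathbb P(x_o^\star=r)=0\) as \(\delta\downarrow0\). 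Choose \(\delta\) so that this gap is below \(\eta\). The empirical tail fraction is then sandwiched between \(\langle\nu_n,\ell_\delta\rangle\) and \(\langle\nu_n,u_\delta\rangle\), and each of these converges in probability to a limit within \(\eta\) of \(\mathbb P(x_o^\star>r)\). Therefore
\[
\mathbb P\Bigl(\Bigl|\frac1n\sum_{i=1}^n\mathbf 1_{\{x_i^\star(n)>r\}}-\mathbb P(x_o^\star>r)\Bigr|>2\eta\Bigr)\to0,
\]
and since \(\eta>0\) was arbitrary, the tail statement follows.

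The only delicate point is upgrading convergence of countably many test integrals to random-measure convergence in probability. That step is routine via the subsequence criterion and requires only that \(\mathbb R\) admits a countable bounded-Lipschitz convergence-determining class.
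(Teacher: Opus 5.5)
Your proposal is correct and follows the paper's proof: you project the joint type--output convergence onto the output coordinate, then handle tails at a continuity point. Your explicit Lipschitz sandwich replaces the paper's appeal to the portmanteau theorem along almost surely convergent subsequences; the countable-class upgrade is harmless but redundant, because the continuous-mapping step already gives $\langle\nu_n,h\rangle\xrightarrow{\mathbb P}\langle\nu,h\rangle$ for every $h\in C_b(\mathbb R)$, which is the definition of $\overset{\mathbb P}{\Rightarrow}$.
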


\begin{proof}
Project the joint law onto its output coordinate, then apply
the portmanteau theorem to the continuity set \((r,\infty)\),
using the subsequence characterization of convergence in
probability.
\end{proof}

We can also restrict attention to a type set with positive limiting
mass, provided its boundary has zero mass under the limiting type law.

\begin{corollary}[Type-conditional convergence]
\label{coro:multitype_convergence}
Under the same hypotheses, let \(A\subseteq\mathcal T\)
be Borel with \(\mu(A)>0\) and \(\mu(\partial A)=0\).
Conditionally on the marked system and equilibrium selection,
let \(o_n^{(A)}\) be uniform among vertices of type in \(A\)
when \(\mu_n(A)>0\), and choose it arbitrarily otherwise.
Let \((G,o^{(A)},\mathcal M(G))\) have law \(\mathcal P^A\).
Then
\[
x_{o_n^{(A)}}^{\star}(n)\Rightarrow x_{o^{(A)}}^\star.
\]
At every continuity point \(r\) of the conditional output law,
\[
\frac{1}{n\mu_n(A)}
\sum_{i:\tau_i(n)\in A}
\mathbf1_{\{x_i^{\star}(n)>r\}}
\xrightarrow{\mathbb P}
\mathbb P(x_{o^{(A)}}^\star>r),
\]
where the empirical proportion is defined as zero when
\(\mu_n(A)=0\).
\end{corollary}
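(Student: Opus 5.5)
Corollary~\ref{coro:joint-type-output-convergence} gives convergence in probability of the joint empirical type--output law \(\Lambda_n:=n^{-1}\sum_i\delta_{(\tau_i(n),x_i^{\star}(n))}\) to \(\Lambda:=\operatorname{Law}(\tau_o,x_o^\star)\). I will reduce the conditional statements to integrals against \(\Lambda_n\) of functions of the form \(\mathbf 1_A(t)\,g(z)\), and then take ratios. Here \(\mathcal P^A\) is interpreted as \(\mathcal P\) conditioned on \(\tau_o\in A\). Hence \(\operatorname{Law}(x_{o^{(A)}}^\star)=\Lambda(\,\cdot\,\mid A\times\mathbb R)\), where \(x^\star\) is measurable in the rooted network as a limit of local functionals.

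The first step is the numerator. Fix \(g\in C_b(\mathbb R)\). The set \(A\times\mathbb R\) has boundary contained in \(\partial A\times\mathbb R\), which is \(\Lambda\)-null because \(\mu(\partial A)=0\). Therefore the bounded function \(h(t,z)=\mathbf 1_A(t)g(z)\) is continuous \(\Lambda\)-a.e. By the extended portmanteau (continuous mapping) theorem, \(\Lambda_n\Rightarrow\Lambda\) implies \(\langle\Lambda_n,h\rangle\to\langle\Lambda,h\rangle\). I use the subsequence criterion: every subsequence has a further subsequence along which \(\Lambda_n\Rightarrow\Lambda\) almost surely. This can be arranged for all \(g\) in a countable convergence-determining class at once, because the space of probability measures on the Polish space \(\mathcal T\times\mathbb R\) is metrizable. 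Along that subsequence, \(\mu_n(A)=\Lambda_n(A\times\mathbb R)\to\mu(A)>0\) and \(\langle\Lambda_n,\mathbf 1_A\otimes g\rangle\to\mathbb E[g(x_o^\star);\tau_o\in A]\) simultaneously, almost surely.

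The second step is the ratio. Since \(\mu(A)>0\), on the same subsequence \(\mu_n(A)>0\) eventually. The conditional empirical law \(\Lambda_n^A:=(n\mu_n(A))^{-1}\sum_{i:\tau_i\in A}\delta_{x_i^\star(n)}\) then satisfies \(\langle\Lambda_n^A,g\rangle\to\mathbb E[g(x_{o^{(A)}}^\star)]\). Hence \(\Lambda_n^A\Rightarrow\operatorname{Law}(x^\star_{o^{(A)}})\) almost surely along the subsequence, and so \(\Lambda_n^A\) converges weakly in probability along the full sequence. The tail statement follows from the portmanteau theorem applied to the continuity set \((r,\infty)\), exactly as in Corollary~\ref{cor:empirical-tail-convergence}. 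The convention that the proportion is zero when \(\mu_n(A)=0\) is harmless because \(\mathbb P(\mu_n(A)=0)\to0\).

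The third step is sampled-root convergence. Conditionally on the system, \(\mathbb E[g(x^\star_{o_n^{(A)}}(n))\mid\cdot]=\langle\Lambda_n^A,g\rangle\) on \(\{\mu_n(A)>0\}\), and it is bounded by \(\|g\|_\infty\) otherwise. Bounded convergence in probability implies \(L^1\) convergence. Taking expectations gives \(\mathbb E g(x^\star_{o_n^{(A)}}(n))\to\mathbb E g(x^\star_{o^{(A)}})\).

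The main obstacle is the first step. The indicator \(\mathbf 1_A\) is discontinuous, so I cannot feed it directly into Definition~\ref{def:lwc} or the bounded-Lipschitz class of Corollary~\ref{coro:joint-type-output-convergence}. The boundary condition \(\mu(\partial A)=0\) must be transferred to \(\Lambda\), and the almost-sure subsequence device is needed to use the deterministic portmanteau theorem. If that device is awkward, an alternative is to sandwich \(\mathbf 1_A\) between Lipschitz functions of \(d_{\mathcal T}(t,A)\) and \(d_{\mathcal T}(t,A^c)\), whose \(\mu\)-integrals differ by at most \(\mu(\partial A^\delta)\to\mu(\partial A)=0\).
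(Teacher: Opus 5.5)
Your proposal is correct and follows essentially the same route as the paper. The paper restricts the joint type--output empirical law of Corollary~\ref{coro:joint-type-output-convergence} to the continuity set \(A\times\mathbb R\) along almost-surely convergent subsequences, and divides by \(\mu_n(A)\xrightarrow{\mathbb P}\mu(A)>0\). It then projects to the output coordinate, applies portmanteau at continuity points, and uses conditional uniform sampling with bounded convergence; your write-up fills in the details that the paper compresses into its reference to the continuity-set argument of Proposition~\ref{prop:type-set-mixture}.
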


\begin{proof}
By joint type--output convergence,
\(\mu_n(A)\xrightarrow{\mathbb P}\mu(A)>0\).
Restricting the joint empirical measures to
\(A\times\mathbb R\) and dividing by their masses gives
convergence to the conditional joint law, by the same
continuity-set argument as in
Proposition~\ref{prop:type-set-mixture}.
Projection onto the output coordinate and conditional
uniform sampling prove the first assertion; portmanteau
gives the tail limit. The zero-denominator event has
probability tending to zero.
\end{proof}

\subsection{Risk Functionals of the Sampled Output}
\label{subsec:risk-functionals}
\label{Risk Measure}

We now transform equilibrium outputs into losses and evaluate their
cross-sectional distribution. Let \(\mathcal F_n\) be generated by the
finite network, its marks, and the selected equilibrium vector. Conditional
on \(\mathcal F_n\), sample \(o_n\) uniformly from \([n]\) and define
\[
Y_{o_n}(n)
:=\ell_{\tau_{o_n}(n)}\bigl(x_{o_n}^{\star}(n)\bigr),
\qquad
Y_o:=\ell_{\tau_o}(x_o^\star).
\]
The associated loss laws are
\[
\eta_n
:=\operatorname{Law}\bigl(Y_{o_n}(n)\mid\mathcal F_n\bigr)
=\frac1n\sum_{i=1}^n
\delta_{\ell_{\tau_i(n)}(x_i^{\star}(n))},
\qquad
\eta:=\operatorname{Law}(Y_o).
\]
Thus \(\eta_n\) is random, whereas \(\eta\) is deterministic.
Risk is evaluated from the empirical loss law of each realized network,
without an outer expectation over network realizations.
The notation \(x\) denotes states and \(Y\) denotes losses; when the
type is understood, we abbreviate \(\ell_{\tau_o}(x_o^\star)\)
by \(\ell(x_o^\star)\).

The equilibrium theorems concern weak convergence of output laws.
For risk evaluation, we additionally impose a common bounded loss
domain and continuity of the loss transformation.

\begin{assumption}[Bounded risk domain]
\label{ass:bounded-risk-domain}
In the contractive regime, let \((\bar\epsilon_t)_{t\in\mathcal T}\)
be deterministic nonnegative bounds, measurable in \(t\), with
\(\bar\epsilon:=\sup_t\bar\epsilon_t<\infty\). Almost surely,
\[
|\epsilon_i(n)|\leq\bar\epsilon_{\tau_i(n)}
\quad\text{for all }n,i,
\qquad
|\epsilon_v|\leq\bar\epsilon_{\tau_v}
\quad\text{for all limiting vertices }v.
\]
In the nonexpansive regime, assume a deterministic \(B_*<\infty\)
such that, a.s.,
\(\mathfrak b(f_i)\leq B_*\) and
\(\mathfrak b(f_v)\leq B_*\) for all finite and limiting vertices.
Define the state interval
\[
K_x:=
\begin{cases}
[-\bar\epsilon/(1-L),\,\bar\epsilon/(1-L)],
&\text{in the contractive regime},\\
[0,B_*],&\text{in the nonexpansive regime}.
\end{cases}
\]
The loss map \(\ell:\mathcal T\times\mathbb R\to\mathbb R\),
\(\ell(t,z)=\ell_t(z)\), is jointly continuous and uniformly
bounded on \(\mathcal T\times K_x\).
\end{assumption}

The equilibrium states and the iterates used below lie in \(K_x\).
Choose a compact interval \(K_\ell\) containing
\(\ell(\mathcal T\times K_x)\). All corresponding losses are bounded,
and their laws belong to \(\mathscr P(K_\ell)\), equipped with the
weak topology. Weak convergence in probability of these random laws
has the same meaning as in Definition~\ref{def:lwc}, with test functions
on \(K_\ell\).

\begin{definition}[Law-invariant risk functional]
\label{def:law_inv}
Take \(L^\infty\) on a common atomless probability space, so that
all bounded real-valued loss laws can be realized there.
A functional \(\rho:L^\infty\to\mathbb R\) is law invariant if
\(X\stackrel{d}{=}Y\) implies \(\rho(X)=\rho(Y)\).
We use its law representation
\(\varrho:\mathcal D\to\mathbb R\), for a class
\(\mathcal D\subseteq\mathscr P(K_\ell)\) containing the loss
laws considered, so that
\(\rho(Z)=\varrho(\operatorname{Law}(Z))\).
\end{definition}

Law invariance alone does not imply continuity with respect to the
loss distribution; see e.g.,~\citet{mcneil2015quantitative, shen2026partial}. We impose the following condition separately.

\begin{assumption}[Weak continuity of the law functional]
\label{ass:weak_continuous_risk}
The measures \(\eta_n\) belong to \(\mathcal D\) a.s.,
\(\eta\in\mathcal D\), and \(\varrho\) is continuous at \(\eta\)
under weak convergence on \(\mathcal D\).
The quantities \(\varrho(\eta_n)\) are measurable; for example,
this holds when \(\varrho\) is Borel measurable on \(\mathcal D\).
\end{assumption}

Joint type--output convergence now passes through the loss map.
The additional continuity assumption is needed only for the
subsequent passage from loss laws to risk values.

\begin{theorem}[Loss-law and risk convergence]
\label{thm:risk_convergence_law}
Assume the hypotheses of either Theorem~\ref{thm:L<1} or
Theorem~\ref{thm:Lleq1}, together with
Assumption~\ref{ass:bounded-risk-domain}.
In the nonexpansive case, let \(x^{\star}(n)\) be any measurable
finite-network equilibrium selection. Then
\(\eta_n\overset{\mathbb P}{\Rightarrow}\eta\).
If Assumption~\ref{ass:weak_continuous_risk} also holds, then
\begin{equation}
\label{eq:qualitative-risk-convergence}
\varrho(\eta_n)
\xrightarrow{\mathbb P}
\varrho(\eta)=\rho(Y_o).
\end{equation}
\end{theorem}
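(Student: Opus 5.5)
The plan is to deduce \(\eta_n\overset{\mathbb P}{\Rightarrow}\eta\) from Corollary~\ref{coro:joint-type-output-convergence} by a continuous-mapping argument on the compact state interval. Then the risk statement follows from continuity of \(\varrho\) at \(\eta\) through the subsequence criterion.

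The first step is to check that all finite-network equilibria and the limiting equilibrium lie in \(K_x\) almost surely. In the contractive regime, the Banach fixed-point bound \(\|x^\star\|_\infty\leq L(\|x^\star\|_\infty+\|\epsilon\|_\infty)\) gives \(\|x^\star\|_\infty\leq L\bar\epsilon/(1-L)\leq\bar\epsilon/(1-L)\). This holds on both \(G_n\) and the limiting forward component, using \(|\epsilon_v|\leq\bar\epsilon_{\tau_v}\leq\bar\epsilon\). In the nonexpansive regime, every equilibrium lies in \([0,\mathbf B]\subseteq[0,B_*]^V\) by Proposition~\ref{prop:infinite-monotone-lattice}. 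Consequently the joint empirical measures \(\nu_n:=n^{-1}\sum_i\delta_{(\tau_i(n),x_i^\star(n))}\) and the limit \(\nu:=\operatorname{Law}(\tau_o,x_o^\star)\) are all supported on the closed set \(\mathcal T\times K_x\).

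The second step passes the weak convergence through \(\ell\). Fix \(h\in C_b(K_\ell)\). The function \(g:=h\circ\ell\) is bounded and continuous on \(\mathcal T\times K_x\). To use Corollary~\ref{coro:joint-type-output-convergence}, which is stated on \(\mathcal T\times\mathbb R\), I would compose \(\ell\) with the retraction \(\pi(t,z)=(t,\mathrm{clip}_{K_x}(z))\). This gives \(\tilde g:=h\circ\ell\circ\pi\in C_b(\mathcal T\times\mathbb R)\), and \(\tilde g\) agrees with \(g\) on the supports, so \(\langle\eta_n,h\rangle=\langle\nu_n,\tilde g\rangle\xrightarrow{\mathbb P}\langle\nu,\tilde g\rangle=\langle\eta,h\rangle\). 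Note that Corollary~\ref{coro:joint-type-output-convergence} is stated as convergence of random measures in probability. I would read it through the subsequence criterion: every subsequence has a further subsequence with \(\nu_n\Rightarrow\nu\) almost surely, since the bounded-Lipschitz class is countable. Along that subsequence, \(\langle\nu_n,\tilde g\rangle\to\langle\nu,\tilde g\rangle\) a.s. for every bounded continuous \(\tilde g\), including non-Lipschitz ones. This gives \(\eta_n\overset{\mathbb P}{\Rightarrow}\eta\), and moreover along subsequences \(\eta_n\Rightarrow\eta\) almost surely on \(\mathscr P(K_\ell)\).

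The final step is the risk convergence. Given any subsequence, I extract a further subsequence on which \(\eta_n\Rightarrow\eta\) a.s. Assumption~\ref{ass:weak_continuous_risk} gives \(\eta_n\in\mathcal D\) a.s. and continuity of \(\varrho\) at \(\eta\), so \(\varrho(\eta_n)\to\varrho(\eta)\) a.s. along it. Measurability of \(\varrho(\eta_n)\) and the subsequence criterion then yield convergence in probability. The identity \(\varrho(\eta)=\rho(Y_o)\) is Definition~\ref{def:law_inv}: \(Y_o\) is bounded with law \(\eta\), realized on the atomless space.

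The main obstacle is the bookkeeping in the second step, which has two parts. The first is ensuring that \(\ell\) need only be continuous on \(\mathcal T\times K_x\) rather than globally, which the clipping retraction handles. The second is upgrading the bounded-Lipschitz convergence of Corollary~\ref{coro:joint-type-output-convergence} to all bounded continuous test functions. I expect the a.s.-subsequence formulation to resolve this cleanly, because \(\mathcal T\times\mathbb R\) is Polish and weak convergence is metrizable.
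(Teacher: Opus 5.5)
Your proposal is correct and follows the paper's proof. The paper likewise pushes the joint type--output convergence of Corollary~\ref{coro:joint-type-output-convergence} forward under \(\ell\), obtaining \(\eta_n=\ell_\#\chi_n\overset{\mathbb P}{\Rightarrow}\ell_\#\chi=\eta\), and then invokes continuity of \(\varrho\) at \(\eta\); your check that all states lie in \(K_x\), the clipping retraction, and the explicit subsequence argument spell out what the paper compresses into ``continuity of \(\ell\) gives.''
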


\begin{proof}
Set \(\chi_n:=n^{-1}\sum_i
\delta_{(\tau_i(n),x_i^{\star}(n))}\) and
\(\chi:=\operatorname{Law}(\tau_o,x_o^\star)\).
Corollary~\ref{coro:joint-type-output-convergence} gives
\(\chi_n\overset{\mathbb P}{\Rightarrow}\chi\).
Writing \(\ell_\#\nu:=\nu\circ\ell^{-1}\) for the pushforward
of \(\nu\) under \(\ell\), continuity of \(\ell\) gives
\[
\eta_n=\ell_\#\chi_n
\overset{\mathbb P}{\Rightarrow}
\ell_\#\chi=\eta.
\]
Continuity of \(\varrho\) at \(\eta\) then gives
\eqref{eq:qualitative-risk-convergence}.
\end{proof}

The examples used below are mean loss,
\(\rho_{\mathrm{mean}}(Y):=\mathbb E[Y]\), and conditional
value-at-risk \citep{rockafellar2002cvar_general},
\[
\operatorname{CVaR}_\alpha(Y)
:=\inf_{z\in\mathbb R}
\Bigl\{z+\frac{1}{1-\alpha}\mathbb E[(Y-z)_+]\Bigr\},
\qquad \alpha\in(0,1).
\]
We use the same CVaR notation for its induced law functional.
For \(\nu,\nu'\in\mathscr P(K_\ell)\), its variational
representation gives
\[
\bigl|\operatorname{CVaR}_\alpha(\nu)
-\operatorname{CVaR}_\alpha(\nu')\bigr|
\leq\frac{1}{1-\alpha}W_1(\nu,\nu').
\]
Indeed, for any coupling of the two laws, the integrals of
\((y-z)_+\) differ by at most the expected absolute difference
of the coupled losses, uniformly in \(z\).
Taking infima over \(z\) and then over couplings proves the bound.
On a common compact support, weak convergence and \(W_1\)
convergence are equivalent; see \citet{villani2009optimal}.
Thus mean loss and CVaR satisfy the continuity assumption.

\section{Coalescence Criteria and Approximation Bounds}
\label{sec:coalescence-approximation}

We first give a sufficient condition for root coalescence in terms of
weighted influence on the limiting network. We then verify local
convergence and coalescence for directed marked inhomogeneous random
graphs. Finally, we derive recursion-depth bounds for equilibrium risk,
distinguishing truncation from fixed-depth local approximation.

\subsection{Weighted Offspring Operators and Coalescence}
\label{subsec:criterion-coalescence}

The coalescence argument combines a pathwise bound with a
weighted offspring operator that records both
type propagation and normalized exposures; compare the
multitype operator framework in
\citet[Section~3.4]{Hofstad_2024}.
The pathwise bound applies to general forward-locally-finite
graphs, while the operator representation uses the branching
property specified below.

\begin{lemma}[Gap recursion]
\label{lem:gap-subrecursion}
Under the bounded nonexpansive response conditions, define
\(\delta_v^{k}:=x_v^{+,k}-x_v^{-,k}\) and
\(B_v:=\mathfrak b(f_v)\).
Then \(\delta_v^{0}=B_v\) and
\[
0\leq\delta_v^{k+1}
\leq\sum_{u:(v,u)\in E(G)}W_{vu}\delta_u^{k}.
\]
Consequently, \(0\leq\delta_v^{k}\leq H_k^B(v)\), where
\[
H_k^B(v)
:=
\sum_{v=v_0\to v_1\to\cdots\to v_k}
\Bigl(\prod_{j=0}^{k-1}W_{v_jv_{j+1}}\Bigr)B_{v_k},
\qquad H_0^B(v):=B_v.
\]
The sum is over directed walks of length \(k\); an empty
sum is zero.
\end{lemma}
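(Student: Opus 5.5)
The plan is to first establish the one-step gap inequality and then iterate it along outgoing walks.

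\emph{Base case and nonnegativity.} At step zero, \(\delta_v^{0}=x_v^{+,0}-x_v^{-,0}=B_v-0=B_v\) by \eqref{eq:type-dependent-extremal-iterates}. Nonnegativity of \(\delta_v^{k}\) for all \(k\) comes directly from the ordered chain \(x^{-,k}\preceq x^{+,k}\) in the proof of Proposition~\ref{prop:infinite-monotone-lattice}. That chain uses only \(T_G0\succeq0\), \(T_G\mathbf B_G\preceq\mathbf B_G\) and order preservation, so it holds on finite graphs and on the forward-locally-finite limit alike.

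\emph{One-step bound.} Fix \(v\) and write
\[
z_v^{\pm}:=\sum_{u:(v,u)\in E(G)}W_{vu}x_u^{\pm,k}+\epsilon_v .
\]
This is a finite sum by forward local finiteness. Then \(\delta_v^{k+1}=f_v(z_v^+)-f_v(z_v^-)\). Since \(f_v\) is nondecreasing and \(z_v^+\geq z_v^-\) (the weights are nonnegative and \(x^{+,k}\succeq x^{-,k}\)), this difference is nonnegative. The \(1\)-Lipschitz property of \(f_v\) then bounds it by
\[
z_v^+-z_v^-=\sum_u W_{vu}\delta_u^{k}.
\]
If \(v\) has no outgoing edges, then \(z_v^+=z_v^-=\epsilon_v\), so \(\delta_v^{k+1}=0\), which matches the empty-sum convention.

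\emph{Iteration.} I would prove \(\delta_v^{k}\leq H_k^B(v)\) for all \(v\) by induction on \(k\). The case \(k=0\) is the base case above. For the inductive step, apply the one-step bound and the induction hypothesis, using nonnegativity of the weights \(W_{vu}\):
\[
\delta_v^{k+1}\leq\sum_u W_{vu}H_k^B(u).
\]
Next, verify the walk identity
\[
H_{k+1}^B(v)=\sum_{u:(v,u)\in E(G)}W_{vu}H_k^B(u).
\]
This holds because every walk of length \(k+1\) from \(v\) splits uniquely into a first edge \((v,u)\) followed by a walk of length \(k\) from \(u\), and the path weight factorizes accordingly. All the sums involved are finite, since only finitely many walks of length \(k\) leave \(v\) in a forward-locally-finite graph, so no convergence issues arise.

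\emph{Main obstacle.} The only delicate point is that on the infinite limiting graph the iterates must be well defined and ordered. This is already supplied by Proposition~\ref{prop:infinite-monotone-lattice}, so I expect the argument to be entirely routine. The only real care needed is in the Lipschitz step, where the sign of \(z_v^+-z_v^-\) must be established first so that the bound can be written without absolute values.
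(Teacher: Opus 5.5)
Your proposal is correct and follows the paper's proof. Monotonicity orders the extremal iterates, the $1$-Lipschitz property bounds $\delta_v^{k+1}$ by $\sum_u W_{vu}\delta_u^{k}$, and iterating through the first-edge decomposition $H_{k+1}^B(v)=\sum_u W_{vu}H_k^B(u)$ gives the walk bound; you simply supply the details the paper leaves implicit (the sign of $z_v^+-z_v^-$, the leaf case, and finiteness of the sums).
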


\begin{proof}
Monotonicity orders the two iterates, and the
\(1\)-Lipschitz property of \(f_v\) bounds their difference
by the weighted sum of the neighbor gaps.
Iterating from \(\delta_v^{0}=B_v\) gives the path bound.
\end{proof}

For the operator criterion, assume a measurable type-indexed
family of responses, \(f_v=f_{\tau_v}\), and write
\(B(t):=\mathfrak b(f_t)\).
Let \(\mu\) denote the root-type law. Choose a measurable
version of the weighted offspring kernel
\[
Q_W(s,A)
:=
\mathbb E\Bigl[
\sum_{u:(o,u)\in E(G)}W_{ou}\mathbf1_{\{\tau_u\in A\}}
\,\Big|\,\tau_o=s
\Bigr],
\qquad A\subseteq\mathcal T\text{ Borel}.
\]
Such a version exists because \(\mathcal T\) is Polish and
the random weighted measure has mass at most one.
For measurable \(h:\mathcal T\to[0,\infty]\), define
\((T_Wh)(s):=\int h(t)Q_W(s,dt)\).
Identities involving conditional laws are understood
\(\mu\)-almost everywhere.
The associated mean-envelope quantity is
\begin{equation}
\label{eq:mean-envelope-sequence}
\Gamma_k(B)
:=\int_{\mathcal T}(T_W^kB)(s)\,\mu(ds),
\qquad k\geq0.
\end{equation}
Under the branching assumption below, this equals
\(\mathbb E[H_k^B(o)]\).

For finite types of positive \(\mu\)-mass, \(T_W\) is
represented by the nonnegative matrix \(\mathbf M\), with
\[
M_{st}
=Q_W(s,\{t\})
=
\mathbb E\Bigl[
\sum_{\substack{u:(o,u)\in E(G)\\\tau_u=t}}W_{ou}
\,\Big|\,\tau_o=s
\Bigr].
\]
Thus \(M_{st}\) is the expected total normalized weight
assigned by a type-\(s\) parent to its type-\(t\) children,
not an individual edge weight. With
\(\boldsymbol\mu=(\mu(\{t\}))_t\) and
\(\mathbf B=(B(t))_t\),
\(\Gamma_k(B)=\boldsymbol\mu^\top\mathbf M^k\mathbf B\).
Null types can be omitted when \(T_W\) is well defined
on \(L^1(\mathcal T,\mu)\).

The next proposition turns decay of this mean weighted boundary
mass into almost-sure coalescence. The branching property is used
only to identify the mean through iterates of \(T_W\).

\begin{proposition}[Coalescence from decay of weighted influence]
\label{prop:coalescence-conditions}
Suppose the limiting marked network is a forward-locally-finite
multitype tree with responses
\(f_v=f_{\tau_v}\in\mathsf F_{\mathrm{ne}}\) and the
following branching property: at every vertex, conditional on
its type, its children's types, and its primitive outgoing
exposures, the descendant marked subtrees rooted at the children
are independent and have the same type-conditioned rooted laws
as the original tree.
Assume \(B\in L^1(\mathcal T,\mu)\), and that \(T_W\)
is well defined as a bounded positive operator on
\(L^1(\mathcal T,\mu)\), with spectral radius
\(r_1(T_W)<1\).
Then
\[
x_o^{-,k}\uparrow x_o^\star,
\qquad
x_o^{+,k}\downarrow x_o^\star
\qquad\text{a.s}.
\]
In particular, condition \assref{B3} of
Theorem~\ref{thm:Lleq1} holds.
\end{proposition}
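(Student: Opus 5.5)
The plan is to combine the pathwise gap bound of Lemma~\ref{lem:gap-subrecursion} with a first-moment computation. By Proposition~\ref{prop:infinite-monotone-lattice}, applied to the limiting forward component (forward locally finite, with the update preserving \([0,\mathbf B_G]\) under the \(\mathsf F_{\mathrm{ne}}\) assumptions), the root iterates satisfy \(x_o^{-,k}\uparrow x_o^{-,\star}\) and \(x_o^{+,k}\downarrow x_o^{+,\star}\) a.s. It therefore suffices to show \(\delta_o^k=x_o^{+,k}-x_o^{-,k}\to0\) a.s. Since \(\delta_o^k\) is nonincreasing in \(k\) (lower iterates increase and upper iterates decrease), convergence in \(L^1\) of \(\delta_o^k\) to zero along the full sequence already identifies the a.s. limit as zero. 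Once this holds, \(x_o^{-,\star}=x_o^{+,\star}=:x_o^\star\), and \assref{B3} follows.

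\textbf{Step 1 (mean identity).} Show \(\mathbb E[H_k^B(o)]=\Gamma_k(B)=\int T_W^kB\,d\mu\) by induction on \(k\), using the branching property. Write \(H_{k+1}^B(o)=\sum_{u:(o,u)\in E}W_{ou}H_k^B(u)\), where \(H_k^B(u)\) is computed in the subtree rooted at \(u\). This is valid because the graph is a tree, so walks from \(o\) of length \(k+1\) decompose uniquely through the first child.

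Condition on the root type, the children's types and the root's outgoing exposures; the weights \(W_{ou}\) are measurable with respect to this information. By the branching property, the conditional expectation of \(H_k^B(u)\) is \(g_k(\tau_u)\), where \(g_k(t):=\mathbb E[H_k^B(o)\mid\tau_o=t]\). Because everything is nonnegative, Tonelli applies and gives
\[
g_{k+1}(s)=\int g_k(t)\,Q_W(s,dt)=(T_Wg_k)(s).
\]
With \(g_0=B\), this yields \(g_k=T_W^kB\) \(\mu\)-a.e., and integrating against \(\mu\) gives the identity. The operator is defined on \([0,\infty]\)-valued functions, so no integrability is needed at this stage.

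\textbf{Step 2 (decay).} Since \(r_1(T_W)<1\), Gelfand's formula gives \(\|T_W^k\|_{L^1\to L^1}^{1/k}\to r_1(T_W)\). Fix \(\rho\in(r_1(T_W),1)\); then \(\|T_W^k\|\le C\rho^k\) for all \(k\). Because \(B\in L^1(\mu)\) and \(T_W\) is positive, \(T_W^kB\ge0\), so
\[
\Gamma_k(B)=\|T_W^kB\|_{L^1(\mu)}\le C\rho^k\|B\|_{L^1}.
\]
Lemma~\ref{lem:gap-subrecursion} then gives \(\mathbb E[\delta_o^k]\le\Gamma_k(B)\to0\).

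As a strengthening, \(\sum_k\mathbb E\delta_o^k<\infty\), so Borel--Cantelli or monotone convergence gives \(\delta_o^k\to0\) a.s. directly; monotonicity of \(\delta_o^k\) makes even this unnecessary. Finally, the sandwich from Proposition~\ref{prop:infinite-monotone-lattice}, namely \(x_o^{-,k}\le x_o^{-,\star}\le x_o^{+,\star}\le x_o^{+,k}\), forces the common limit.

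\textbf{Main obstacle.} The delicate step is Step 1. It requires justifying that the type-conditioned law of the subtree at a child, together with the measurability of \(W_{ou}\) given the root's primitive exposures, lets us replace \(H_k^B(u)\) by \(g_k(\tau_u)\) inside the weighted sum. This involves a countable sum over a random, finite set of children, and it needs the identification of the \(\mu\)-a.e. version of \(Q_W\) with the children's type distribution at non-root vertices.

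I would handle this by conditioning on the depth-one data \(\mathcal F_1\), applying the branching property to obtain \(\mathbb E[H_k^B(u)\mid\mathcal F_1]=g_k(\tau_u)\), and then using the definition of \(Q_W\) through the conditional expectation given \(\tau_o\). Any discrepancy from the a.e. qualifiers is absorbed by noting that children's types under the type-conditioned laws are absolutely continuous with respect to \(\mu\) whenever \(T_W\) is well defined on \(L^1(\mu)\), which is how the paper's convention on null types is meant.
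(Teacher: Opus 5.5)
Your proposal is correct and follows the paper's proof: the mean identity \(\mathbb E[H_k^B(o)\mid\tau_o=s]=(T_W^kB)(s)\) by induction via the branching property, the spectral-radius bound \(\Gamma_k(B)\le K_a a^k\|B\|_{L^1(\mu)}\), and the gap lemma \(\delta_o^k\le H_k^B(o)\). The differences are minor: the paper gets almost-sure decay from \(\sum_k\mathbb E[H_k^B(o)]<\infty\) via Tonelli, whereas you use monotonicity of \(\delta_o^k\) (you note both routes), and your remark on the \(\mu\)-a.e.\ version of the type-conditioned laws states explicitly a point the paper leaves implicit.
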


\begin{proof}
The identity
\(H_{k+1}^B(o)=\sum_uW_{ou}H_k^B(u)\), together with
the branching property, yields by induction
\[
\mathbb E[H_k^B(o)\mid\tau_o=s]=(T_W^kB)(s),
\qquad
\mathbb E[H_k^B(o)]
=\Gamma_k(B)=\|T_W^kB\|_{L^1(\mu)}.
\]
For any \(a\in(r_1(T_W),1)\), the spectral-radius formula
gives a finite \(K_a\) such that
\(\|T_W^k\|_{1\to1}\leq K_a a^k\) for all \(k\).
Consequently,
\[
\Gamma_k(B)\leq K_a a^k\|B\|_{L^1(\mu)},
\qquad
\sum_{k=0}^{\infty}\mathbb E[H_k^B(o)]<\infty.
\]
Tonelli's theorem implies \(H_k^B(o)\to0\) a.s.;
the expectation bound also gives convergence in \(L^1\).
Lemma~\ref{lem:gap-subrecursion} now gives root coalescence.
Since every equilibrium lies between the extremal iterates,
all limiting equilibria have the same root coordinate.
\end{proof}

\begin{remark}[Interpreting the spectral criterion]
\label{rem:coalescence-reading}
For finite types, write \(r(\mathbf M)\) for the spectral
radius of the weighted offspring matrix.
If outgoing weights sum to one at nonleaves and zero at leaves,
then \(\sum_t M_{st}
=\mathbb P(\deg^+(o)>0\mid\tau_o=s)\).
A positive leaf probability at every type therefore gives
\(r(\mathbf M)\leq\|\mathbf M\|_\infty<1\).
If the offspring-count mean matrix \(\mathbf K=(K_{st})\), with
\(K_{st}:=\mathbb E[\#\{u:(o,u)\in E,\tau_u=t\}
\mid\tau_o=s]\), has finite entries, then
\(\mathbf M\leq\mathbf K\) entrywise and
\(r(\mathbf M)\leq r(\mathbf K)\).
Subcriticality \(r(\mathbf K)<1\) is therefore sufficient
but not necessary for coalescence: normalized influence may
decay even on a supercritical tree.
At \(r(\mathbf M)=1\), coalescence need not hold.
For example, on an infinite one-child chain with unit weights,
zero shocks, and \(f(z)=\min\{B,\max\{0,z\}\}\), \(B>0\),
the gap remains \(x_o^{+,k}-x_o^{-,k}=B\) for every \(k\).
\end{remark}

\subsection{Directed Marked Inhomogeneous Random Graphs}
\label{subsec:directed-marked-irg}
We now construct directed marked inhomogeneous random graphs (IRGs)
for which the local convergence and coalescence assumptions can be verified.
Let \(\tau(n)=(\tau_i(n))_{i=1}^n\) be a deterministic
type array with empirical law
\(\mu_n:=n^{-1}\sum_i\delta_{\tau_i(n)}\Rightarrow\mu\).
Fix a bounded continuous kernel
\(\kappa:\mathcal T\times\mathcal T\to[0,\infty)\), with
\(\|\kappa\|_\infty\leq\bar\kappa<\infty\).
Conditional on the types, generate independent directed edge
indicators for \(i\ne j\) with
\[
\mathbb P\bigl(A_{ij}(n)=1\mid\tau(n)\bigr)
=
p_{ij}(n)
:=\frac{\kappa(\tau_i(n),\tau_j(n))}{n}\wedge1,
\qquad A_{ii}(n)=0.
\]
Set \(E_n:=\{(i,j):A_{ij}(n)=1\}\) and
\(G_n^\kappa:=([n],E_n)\).

For this construction, responses are type-indexed, with
\(f_s\in\mathsf F_{\mathrm{resp}}\), where
\(\mathsf F_{\mathrm{resp}}\) denotes the response space of
the applicable regime. Let \(\zeta_s\) be the shock law at type
\(s\), and set
\(\mathsf K_s^V(de,df):=\zeta_s(de)\delta_{f_s}(df)\).
Let \(\mathsf K_{s,t}^E\) be the probability law of a positive
primitive exposure on a present edge from type \(s\) to type \(t\).

\begin{assumption}[Feller type-dependent marking kernels]
\label{ass:feller-marking-kernels}
The maps \(s\mapsto\mathsf K_s^V\) and
\((s,t)\mapsto\mathsf K_{s,t}^E\) are weakly continuous.
Conditional on the type array, the vertex marks are independent,
with
\((\epsilon_i(n),f_i)
\sim\mathsf K_{\tau_i(n)}^V\),
and independent of the edge indicators.
Conditional on the types and realized edge set, present-edge
marks are mutually independent, independent of the vertex marks,
and satisfy
\(C_{ij}(n)
\sim\mathsf K_{\tau_i(n),\tau_j(n)}^E\).
\end{assumption}

The assumption includes continuity of \(s\mapsto f_s\) in the
chosen response topology. Let \(\mathcal M_n\) collect the
vertex marks \((\tau_i(n),\epsilon_i(n),f_i)\)
and present-edge exposures \(C_{ij}(n)\).
Weights are obtained by the normalization in
Section~\ref{sec:mark_assumption}.
Thus \(\kappa\) controls link formation and
\(\mathsf K_{s,t}^E\) specifies exposure conditional on a link.

The corresponding local limit replaces each newly explored
neighborhood by independent offspring with the following marked law.

\begin{definition}[Directed marked kernel branching process]
\label{def:directed-marked-kernel-bp}
Set \(\Lambda_s(dt):=\kappa(s,t)\mu(dt)\).
The directed marked kernel branching process
\((\mathsf T_\kappa,o,\mathcal M(\mathsf T_\kappa))\)
has root type \(\tau_o\sim\mu\) and edges directed from
parents to children. For every \(r\geq0\), conditional on
the typed tree through generation \(r\), the offspring-type
point measures of its generation-\(r\) vertices are
independent and satisfy
\[
\Pi_v:=\sum_{u:(v,u)\in E}\delta_{\tau_u}
\sim\operatorname{PRM}(\Lambda_{\tau_v}),
\]
where \(\operatorname{PRM}\) denotes a Poisson random measure.
Since \(\Lambda_s(\mathcal T)\leq\bar\kappa\), every offspring count
is a.s. finite. Atoms are counted with multiplicity and
correspond to distinct children.
Conditional on the entire typed tree, vertex and edge marks
are mutually independent, with respective laws
\(\mathsf K_{\tau_v}^V\) and
\(\mathsf K_{\tau_v,\tau_u}^E\).
\end{definition}

This construction identifies the limiting rooted law. The following
result gives convergence of the empirical rooted measures in probability,
as required by the equilibrium theorems.

\begin{theorem}[Marked local convergence of directed IRGs]
\label{thm:directed-marked-irg-lwc}
Under the preceding construction, suppose that
\(\mu_n\Rightarrow\mu\), \(\kappa\) is bounded and continuous,
and Assumption~\ref{ass:feller-marking-kernels} holds.
Let \(\mathcal P\) be the law of the rooted marked branching
process in Definition~\ref{def:directed-marked-kernel-bp}.
Then, on \(\widetilde{\mathcal G}_*\),
\[
\mathcal P_n
:=\frac1n\sum_{i=1}^n
\delta_{[G_n^\kappa,i,\mathcal M_n]}
\overset{\mathbb P}{\Rightarrow}\mathcal P.
\]
\end{theorem}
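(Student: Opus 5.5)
The plan is the standard second-moment route for local weak convergence in probability. Since \((\widetilde{\mathcal G}_*,d_{\mathrm{loc}})\) is Polish, it suffices to prove \(\langle\mathcal P_n,h\rangle\to\langle\mathcal P,h\rangle\) in probability for \(h\) in a convergence-determining class. By the definition of \(d_{\mathrm{loc}}\) as a weighted sum of depth-\(r\) distances, we may take \(h\) bounded, Lipschitz and depending only on \([\xi]_r\) for a fixed depth \(r\). Such local functionals approximate any bounded Lipschitz \(h\) uniformly up to \(2^{-r}\).

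For such \(h\) the argument has two parts. First, we show the first moment \(\mathbb E\langle\mathcal P_n,h\rangle=\mathbb E[h(G_n^\kappa,o_n)]\to\langle\mathcal P,h\rangle\). Second, we show the variance vanishes, via \(\mathbb E[h(G_n^\kappa,o_n)h(G_n^\kappa,o_n')]\to\langle\mathcal P,h\rangle^2\) for two independent uniform roots. Chebyshev then gives convergence in probability.

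\textbf{First moment: coupling of the exploration.} Explore the outgoing depth-\(r\) neighborhood of \(o_n\) breadth-first. When a vertex \(v\) of type \(s=\tau_v(n)\) is processed, the out-neighbours among the \(n-O(1)\) unexplored vertices are independent Bernoulli with parameters \(\kappa(s,\tau_j(n))/n\wedge1\). Their type point measure is therefore close in total variation to \(\operatorname{PRM}(\kappa(s,t)\mu_n(dt))\), with error \(O(\bar\kappa^2/n)\) by the Le Cam/Poisson approximation. Collisions with already explored vertices have probability \(O(|\text{explored}|\,\bar\kappa/n)\). The explored set is tight because offspring are dominated by \(\operatorname{Poisson}(\bar\kappa)\), so with probability \(1-o(1)\) the depth-\(r\) neighborhood is a tree.

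One subtle point is outgoing exploration: the depth-\(r\) restriction requires the complete outgoing star of vertices at distance less than \(r\), which is exactly what the exploration reveals, so no in-edges are needed.

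It remains to pass from \(\mu_n\) to \(\mu\). The root type \(\tau_{o_n}\sim\mu_n\Rightarrow\mu\). The intensity \(\kappa(s,t)\mu_n(dt)\) converges weakly to \(\kappa(s,t)\mu(dt)\), uniformly over \(s\) in compacts, by boundedness and continuity of \(\kappa\). Marks are then attached independently via the Feller kernels \(\mathsf K^V\) and \(\mathsf K^E\), so the marked depth-\(r\) tree law depends weakly continuously on the types.

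I expect this step to be the main obstacle. Because \(\mathcal T\) is Polish and not finite, there is no exact coupling of the types. I would handle it with Skorokhod-type representations on a finite-dimensional description of the depth-\(r\) tree: fix the shape, and then the types and marks form a vector in a product Polish space. Weak continuity of the finite-dimensional laws, namely the Poisson intensities and the mark kernels, together with tightness of the shape, gives convergence of \(\mathbb E\,h\) for \(h\) continuous in \(d_r\). Edge marks are handled using the metric \(|\log c-\log c'|\) on \((0,\infty)\).

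\textbf{Second moment.} For two independent uniform roots, run the two explorations jointly. Their neighborhoods have tight sizes and meet with probability \(O(1/n)\) times the tight sizes. Off that event the explorations use disjoint vertex sets, and conditional on the types they are asymptotically independent; the edge indicators and marks are independent. The same coupling then shows the pair converges to two independent copies of the depth-\(r\) marked tree, giving the product limit. Combining the two moments, using Chebyshev, and approximating a general \(h\in C_b\) through the countable bounded-Lipschitz class with the subsequence criterion yields \(\mathcal P_n\overset{\mathbb P}{\Rightarrow}\mathcal P\).
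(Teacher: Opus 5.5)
Your proposal is correct and follows essentially the same route as the paper's proof: a breadth-first outgoing exploration with row-by-row collision control, a Poisson limit for the new-child type measures together with a continuity argument to pass from \(\mu_n\) to \(\mu\), independent Feller marking, and a two-root second-moment bound giving convergence in probability. The differences are technical variants. You use Le Cam's total-variation approximation where the paper uses Laplace functionals, and you couple the marks directly in the two-root exploration where the paper first proves typed convergence and then bounds the conditional variance of the marks by the overlap probability. You also reduce to depth-\(r\) test functions via the \(\operatorname{Lip}(h)\,2^{-r-1}\) truncation error instead of the projective-limit corollary. One small correction: a sum of Bernoulli variables is not stochastically dominated by a Poisson variable with the same mean, so tightness of the explored set should come from first-moment bounds or from a Poisson\((2\bar\kappa)\) domination, as in the paper.
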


The proof is given in Appendix~\ref{sec:lwc-irg-proof}.
The next corollary verifies root coalescence for the nonexpansive
specialization of this limit.

\begin{corollary}[Coalescence for bounded-kernel IRG limits]
\label{coro:bounded-kernel-irg-coalescence}
Let the limiting network be the directed marked kernel
branching process of
Definition~\ref{def:directed-marked-kernel-bp}, with
\(\|\kappa\|_\infty\leq\bar\kappa<\infty\).
If \(f_v=f_{\tau_v}\in\mathsf F_{\mathrm{ne}}\) and
\(B(t)=\mathfrak b(f_t)\in L^1(\mathcal T,\mu)\),
then root coalescence holds.
\end{corollary}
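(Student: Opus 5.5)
The plan is to reduce Corollary~\ref{coro:bounded-kernel-irg-coalescence} to Proposition~\ref{prop:coalescence-conditions}. The directed marked kernel branching process of Definition~\ref{def:directed-marked-kernel-bp} is a forward-locally-finite multitype tree. It has the required branching property: offspring point measures are conditionally independent Poisson random measures given the parent type, and marks are conditionally independent with type-determined laws. So it suffices to show that \(T_W\) is a bounded positive operator on \(L^1(\mathcal T,\mu)\) with \(r_1(T_W)<1\).

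\textbf{Step 1: a density for \(Q_W\).} I would first show that \(Q_W(s,\cdot)\) has a density \(q(s,\cdot)\) with respect to \(\mu\) satisfying \(0\leq q(s,t)\leq\kappa(s,t)\leq\bar\kappa\). The tool is the Mecke (Palm) formula for \(\Pi_o\sim\operatorname{PRM}(\Lambda_s)\). It gives
\[
(T_Wh)(s)=\int_{\mathcal T}\kappa(s,t)\,h(t)\,
\mathbb E\Bigl[\frac{C}{C+S}\Bigr]\mu(dt),
\]
where \(C\sim\mathsf K^E_{s,t}\) is the exposure to an added child of type \(t\). Here \(S\) is the total exposure to the children from an independent copy of \(\operatorname{PRM}(\Lambda_s)\), with independent edge marks. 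The expectation lies in \([0,1]\), which gives the density bound. Consequently \(\|T_Wh\|_{L^\infty(\mu)}\leq\bar\kappa\|h\|_{L^1(\mu)}\), and in particular \(\|T_W\|_{1\to1}\leq\bar\kappa\). So \(T_W\) is bounded and positive on \(L^1(\mu)\).

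\textbf{Step 2: uniform contraction in \(L^\infty\).} Outgoing weights sum to one at nonleaves and to zero at leaves, so
\[
(T_W\mathbf 1)(s)=\mathbb P(\deg^+(o)>0\mid\tau_o=s)=1-e^{-\Lambda_s(\mathcal T)}\leq c:=1-e^{-\bar\kappa}<1.
\]
Positivity then gives \(\|T_W\|_{\infty\to\infty}\leq c\).

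\textbf{Step 3: combine the two bounds.} Writing \(T_W^k=T_W^{k-1}T_W\) gives
\[
\|T_W^kh\|_{L^1(\mu)}\leq\|T_W^kh\|_{L^\infty(\mu)}\leq c^{k-1}\bar\kappa\|h\|_{L^1(\mu)},
\qquad k\geq1.
\]
Hence \(\|T_W^k\|_{1\to1}\leq\bar\kappa c^{k-1}\). Gelfand's formula then yields \(r_1(T_W)\leq c<1\). Since \(B\in L^1(\mu)\) by hypothesis, Proposition~\ref{prop:coalescence-conditions} applies and gives root coalescence. Alternatively, one can directly repeat its Tonelli argument with the summable bound \(\Gamma_k(B)\leq\bar\kappa c^{k-1}\|B\|_{L^1(\mu)}\).

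The main obstacle is Step 1. One must justify the Palm computation carefully: the normalized weight of one child depends on all siblings' exposures. One must also check measurability of the resulting version of \(Q_W\). The key point is that no integrability of \(C\) is needed, because \(C/(C+S)\in[0,1]\). Everything else is routine operator-norm bookkeeping.
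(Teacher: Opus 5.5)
Your proposal is correct and follows essentially the same route as the paper: the smoothing bound $\|T_Wh\|_\infty\leq\bar\kappa\|h\|_{L^1(\mu)}$, the contraction $(T_W\mathbf 1)(s)=1-e^{-\lambda(s)}\leq c$ on bounded functions, and their combination $\|T_W^k\|_{1\to1}\leq\bar\kappa c^{k-1}$, which feeds Proposition~\ref{prop:coalescence-conditions} via the spectral-radius formula. The only difference is your Step~1, where the Mecke-formula density is more than needed: the paper uses $W_{ou}\leq1$ to dominate $Q_W(s,A)$ by the expected number of type-$A$ children, $\int_A\kappa(s,t)\mu(dt)\leq\bar\kappa\mu(A)$, so the step you flagged as the main obstacle is in fact immediate.
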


\begin{proof}
The case \(\bar\kappa=0\) is immediate.
Otherwise, set
\(\lambda(s):=\int\kappa(s,t)\mu(dt)\) and
\(c:=1-e^{-\bar\kappa}<1\).
A type-\(s\) parent has
\(D_s\sim\operatorname{Poisson}(\lambda(s))\) children.
Its normalized weights sum to one when \(D_s>0\) and
to zero otherwise, so
\[
(T_W\mathbf1)(s)=1-e^{-\lambda(s)}\leq c.
\]
Since each weight is at most one, for Borel \(A\),
\[
Q_W(s,A)
\leq\mathbb E[\#\{u:(o,u)\in E,\ \tau_u\in A\}
\mid\tau_o=s]
=\int_A\kappa(s,t)\mu(dt)
\leq\bar\kappa\mu(A).
\]
Hence
\(\|T_Wh\|_\infty\leq\bar\kappa\|h\|_{L^1(\mu)}\)
and
\(\|T_Wq\|_\infty\leq c\|q\|_\infty\)
for integrable \(h\) and bounded \(q\).
Because \(\mu\) is a probability measure, \(T_W\) is
bounded on \(L^1(\mu)\), and
\[
\|T_W^k\|_{L^1(\mu)\to L^1(\mu)}
\leq\bar\kappa c^{k-1},\qquad k\geq1.
\]
Thus \(r_1(T_W)\leq c<1\), and
Proposition~\ref{prop:coalescence-conditions} applies.
\end{proof}

A one-point type space with \(\kappa\equiv\lambda\) gives a
directed Erd\H{o}s--R\'enyi graph and a Poisson Galton--Watson
limit of mean \(\lambda\). For finitely many types, this is a
directed stochastic block model. A type-\(s\) parent has independent
Poisson offspring counts, with mean \(\kappa(s,t)\mu(\{t\})\)
for type \(t\).
The general equilibrium theorems also apply to other graph
models satisfying marked local convergence; they do not depend
on the independent-edge construction above.

\subsection{Recursion-Depth Approximation Bounds}
\label{sec:rates}

We separate recursion-depth truncation from fixed-depth local
approximation. Throughout, assume the relevant equilibrium theorem
of Section~\ref{sec:equilibrium-risk} and
Assumption~\ref{ass:bounded-risk-domain}, so all loss laws are
supported on the common compact interval \(K_\ell\).
The bounds do not include Monte Carlo estimation error.
To quantify truncation, we strengthen continuity to the following
uniform Lipschitz conditions.

\begin{assumption}[Quantitative loss and risk regularity]
\label{ass:w1-lipschitz-risk}
There is a finite constant \(\operatorname{Lip}_\ell\) such that
\[
|\ell_t(z)-\ell_t(z')|
\leq\operatorname{Lip}_\ell|z-z'|,
\qquad t\in\mathcal T,\quad z,z'\in K_x.
\]
The risk functional \(\rho\) is monotone and cash invariant
under the loss convention, \(\rho(Z+c)=\rho(Z)+c\).
All loss laws introduced below belong to \(\mathcal D\), and
there is \(L_\varrho<\infty\) such that
\begin{equation}
\label{eq:w1-lipschitz-risk}
|\varrho(\nu)-\varrho(\nu')|
\leq L_\varrho W_1(\nu,\nu'),
\qquad \nu,\nu'\in\mathcal D.
\end{equation}
\end{assumption}

Monotonicity and cash invariance also imply
\(|\rho(Z)-\rho(Z')|\leq\|Z-Z'\|_\infty\) for losses on the
same probability space: with \(c=\|Z-Z'\|_\infty\), use
\(Z\leq Z'+c\) and \(Z'\leq Z+c\).
This bound controls pathwise truncation without the factor
\(L_\varrho\). Mean loss and \(\operatorname{CVaR}_\alpha\)
satisfy the risk assumptions with \(L_\varrho=1\) and
\(L_\varrho=(1-\alpha)^{-1}\), respectively.

\subsubsection{Error decomposition}
\label{subsec:rate-decomposition}

In the contractive regime, define the depth-\(k\) loss laws
\[
\eta_n^k
:=\frac1n\sum_{i=1}^n
\delta_{\ell_{\tau_i(n)}(x_i^{k}(n))},
\qquad
\eta^k:=\operatorname{Law}\bigl(\ell_{\tau_o}(x_o^{k})\bigr).
\]
Thus the superscript \(k\) indexes recursion depth, as for the state
iterates. The triangle inequality gives
\begin{equation}
\label{eq:rate-decomp}
|\varrho(\eta_n)-\varrho(\eta)|
\leq
\underbrace{|\varrho(\eta_n)-\varrho(\eta_n^k)|}_{\beta_{n,k}}
+\underbrace{|\varrho(\eta_n^k)-\varrho(\eta^k)|}_{E_{n,k}}
+\underbrace{|\varrho(\eta^k)-\varrho(\eta)|}_{\beta_k}.
\end{equation}
The truncation biases \(\beta_{n,k}\) and \(\beta_k\) compare
iterates and equilibria on the same network.
The term \(E_{n,k}\) compares finite- and limiting-network
laws at fixed depth. By fixed-depth continuity and marked local
convergence, \(\eta_n^k\overset{\mathbb P}{\Rightarrow}\eta^k\).
On \(K_\ell\), this is equivalent to convergence in \(W_1\), so
\begin{equation}
\label{eq:fixed-depth-rate}
E_{n,k}
\leq L_\varrho W_1\bigl(\eta_n^k,\eta^k\bigr)
\xrightarrow{\mathbb P}0
\qquad\text{for every fixed }k.
\end{equation}
This is a qualitative local-approximation statement; the assumptions
do not specify a finite-\(n\) rate.

\begin{remark}[The finite-network term]
\label{rem:general-irg-finite-n}
A quantitative estimate requires both a local coupling bound and
control of empirical fluctuations. Writing
\(\bar\eta_n^k:=\mathbb E[\eta_n^k]\), one has
\[
W_1\bigl(\eta_n^k,\eta^k\bigr)
\leq
W_1\bigl(\eta_n^k,\bar\eta_n^k\bigr)
+W_1\bigl(\bar\eta_n^k,\eta^k\bigr).
\]
The first term concerns fluctuations around the annealed law;
the second concerns its approximation by the limiting law.
Branching-process couplings are developed in
\citet{BollobasJansonRiordan2007,Hofstad_2024,OlveraCravioto2022}.
For our marked IRG construction, the exploration argument in
Appendix~\ref{sec:lwc-irg-proof} controls collision probabilities.
Rates for type and mark approximation, and for empirical
fluctuations, require additional quantitative assumptions.
We therefore retain \(E_{n,k}=o_{\mathbb P}(1)\) at fixed \(k\).
\end{remark}

\subsubsection{Contractive regime}
\label{subsec:contraction-error}

We first use the uniform contraction estimate, then refine the
limiting bias bound through the weighted branching structure.
Let \(\Delta_k:=\bar\epsilon L^{k+1}/(1-L)\).
The contraction estimates give
\begin{equation}
\label{eq:contraction-output-bound}
|x_o^\star-x_o^{k}|\leq\Delta_k,
\qquad
\max_{i\in[n]}
|x_i^{\star}(n)-x_i^{k}(n)|\leq\Delta_k.
\end{equation}
Comparing the losses on the same limiting network, or at the
same uniformly sampled vertex of a realized finite network,
and using the \(L^\infty\) risk bound yields
$\beta_{n,k}\vee\beta_k \leq\operatorname{Lip}_\ell\Delta_k.$

A second bound incorporates the weighted branching structure.
Assume that the limiting network has the branching property in
Proposition~\ref{prop:coalescence-conditions}, with
\(f_v=f_{\tau_v}\in\mathsf F_{\mathrm c}\).
Set \(L_t:=\operatorname{Lip}(f_t)\) and define
\[
(R_Wh)(s):=L_s(T_Wh)(s),
\qquad
q_\epsilon(t):=L_t\mathbb E[|\epsilon_o|\mid\tau_o=t].
\]
Here \(T_W\) is the weighted offspring kernel action; only
its branching structure, not the nonexpansive response
assumptions, is used in this specialization.
The profile \(q_\epsilon\) is integrable because
\(q_\epsilon(t)\leq L_t\bar\epsilon_t\leq L\bar\epsilon\).
Suppose \(R_W\) is well defined as a bounded positive operator
on \(L^1(\mathcal T,\mu)\), with \(r_1(R_W)<1\), and set
\[
h_\epsilon:=(I-R_W)^{-1}q_\epsilon
=\sum_{j=0}^{\infty}R_W^jq_\epsilon.
\]
For any bounded positive operator \(A\) on \(L^1(\mu)\) and
\(h\geq0\), write
\(\Gamma_k^A(h):=\int_{\mathcal T}(A^kh)(t)\,\mu(dt)\).
In particular, \(\Gamma_k(B)=\Gamma_k^{T_W}(B)\).

To verify the operator bound, set
\(d_j(t):=\mathbb E[|x_o^{j+1}-x_o^{j}|\mid\tau_o=t]\).
Since \(f_t(0)=0\), one has \(d_0(t)\leq q_\epsilon(t)\).
For subsequent increments, the Lipschitz inequality gives
\[
|x_o^{j+2}-x_o^{j+1}|
\leq L_{\tau_o}\sum_u W_{ou}|x_u^{j+1}-x_u^{j}|.
\]
Conditioning on the offspring types and outgoing exposures, then
using the branching property, yields \(d_{j+1}\leq R_Wd_j\).
Induction gives \(d_j\leq R_W^jq_\epsilon\).
Summing the nonnegative increment bounds over \(j\geq k\)
and applying Tonelli's theorem yields
\begin{equation*}
\label{eq:contraction-graph-certificate}
\mathbb E|x_o^\star-x_o^{k}|
\leq\Gamma_k^{R_W}(h_\epsilon)
=\int_{\mathcal T}
\bigl(R_W^k(I-R_W)^{-1}q_\epsilon\bigr)(t)\,\mu(dt).
\end{equation*}
The coupling on the same rooted tree therefore gives
\(W_1(\eta^k,\eta)
\leq\operatorname{Lip}_\ell\Gamma_k^{R_W}(h_\epsilon)\).
Combining the two truncation bounds, we obtain
\begin{equation}
\label{eq:contraction-min-risk-bound}
\beta_k
\leq
\min\Bigl\{
\operatorname{Lip}_\ell\Delta_k,
L_\varrho\operatorname{Lip}_\ell
\Gamma_k^{R_W}(h_\epsilon)
\Bigr\}.
\end{equation}
The first term follows from a pathwise bound and uses monotonicity
and cash invariance. The second uses an expected error and
\(W_1\)-Lipschitz continuity.
For every \(a\in(r_1(R_W),1)\), the spectral-radius formula gives
a finite \(K_a\) such that
\(\Gamma_k^{R_W}(h_\epsilon)
\leq K_a\|h_\epsilon\|_{L^1(\mu)}a^k\).
Thus decay of weighted influence can improve the worst-type
truncation estimate.

Returning to \eqref{eq:rate-decomp},
\begin{equation*}
\label{eq:contraction-finite-limit}
\begin{aligned}
|\varrho(\eta_n)-\varrho(\eta)|
\leq{}&\operatorname{Lip}_\ell\Delta_k+\min\Bigl\{
\operatorname{Lip}_\ell\Delta_k,
L_\varrho\operatorname{Lip}_\ell
\Gamma_k^{R_W}(h_\epsilon)
\Bigr\}+E_{n,k}.
\end{aligned}
\end{equation*}
For fixed \(k\), the last term vanishes in probability as
\(n\to\infty\); the remaining terms vanish as \(k\to\infty\).
This recovers risk convergence by first passing to the local
limit and then removing the truncation.
It also gives, for every \(\varepsilon>0\),
\begin{equation*}
\label{eq:contraction-two-step-limit}
\lim_{k\to\infty}\limsup_{n\to\infty}
\mathbb P\Bigl(
|\varrho(\eta_n^k)-\varrho(\eta)|>\varepsilon
\Bigr)=0.
\end{equation*}

For CVaR, \eqref{eq:contraction-min-risk-bound} becomes
\[
\bigl|\operatorname{CVaR}_\alpha(\eta^k)
-\operatorname{CVaR}_\alpha(\eta)\bigr|
\leq\operatorname{Lip}_\ell
\min\Bigl\{\Delta_k,
\frac{\Gamma_k^{R_W}(h_\epsilon)}{1-\alpha}\Bigr\}.
\]
A depth for which this bound is at most \(\varepsilon\)
certifies the truncation bias of the limiting risk value.
It does not by itself control the finite-network or Monte Carlo error.
The operator term applies to the limiting branching process;
the uniform bound in \eqref{eq:contraction-output-bound}
applies to both finite and limiting networks.

\subsubsection{Nonexpansive regime}
\label{subsec:nonexpansive-error}

In the nonexpansive regime, the width of the monotone bracket
replaces the contraction error. We apply the preceding decomposition
to either bounding recursion.
For \(\sigma\in\{-,+\}\), define
\[
\eta_n^{\sigma,k}
:=\frac1n\sum_{i=1}^n
\delta_{\ell_{\tau_i(n)}(x_i^{\sigma,k}(n))},
\qquad
\eta^{\sigma,k}
:=\operatorname{Law}\bigl(\ell_{\tau_o}(x_o^{\sigma,k})\bigr).
\]
Let \(\beta_{n,k}^\sigma\), \(E_{n,k}^\sigma\), and
\(\beta_k^\sigma\) be the terms in \eqref{eq:rate-decomp}
with \(\eta_n^k,\eta^k\) replaced by
\(\eta_n^{\sigma,k},\eta^{\sigma,k}\).
Write
\(\delta_i^{k}(n):=x_i^{+,k}(n)-x_i^{-,k}(n)\)
and \(\delta_o^{k}:=x_o^{+,k}-x_o^{-,k}\).

The finite-network truncation is controlled by the empirical
bracket width, while the weighted offspring operator bounds its
limiting mean.

\begin{theorem}[Nonexpansive truncation bounds]
\label{thm:rate-nonexpansive}
Assume the hypotheses of Theorem~\ref{thm:Lleq1},
Assumptions~\ref{ass:bounded-risk-domain}
and~\ref{ass:w1-lipschitz-risk}, and the branching,
type-indexing, integrability, and operator hypotheses of
Proposition~\ref{prop:coalescence-conditions}.
In particular, \(B(t)=\mathfrak b(f_t)\in L^1(\mu)\)
and \(r_1(T_W)<1\).
For every measurable finite-network equilibrium selection
and either \(\sigma\in\{-,+\}\),
\[
\beta_{n,k}^\sigma
\leq L_\varrho\operatorname{Lip}_\ell
\frac1n\sum_{i=1}^n\delta_i^{k}(n),
\qquad
\beta_k^\sigma
\leq L_\varrho\operatorname{Lip}_\ell\Gamma_k(B).
\]
Consequently, for every fixed \(k\),
\begin{equation}
\label{eq:nonexpansive-finite-limit}
|\varrho(\eta_n)-\varrho(\eta)|
\leq2L_\varrho\operatorname{Lip}_\ell\Gamma_k(B)
+o_{\mathbb P}(1),
\qquad n\to\infty.
\end{equation}
\end{theorem}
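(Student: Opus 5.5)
The argument couples, on the same probability space, the equilibrium loss with the corresponding bounding-iterate loss. I then pass the expected coupling error through the $W_1$-Lipschitz property \eqref{eq:w1-lipschitz-risk}, and finally reassemble the decomposition \eqref{eq:rate-decomp}.

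\emph{Finite-network bias.} Fix a realized network, an equilibrium selection, and $\sigma\in\{-,+\}$. By Proposition~\ref{prop:infinite-monotone-lattice}, every equilibrium satisfies $x_i^{-,k}(n)\le x_i^\star(n)\le x_i^{+,k}(n)$. Hence
\[
|x_i^\star(n)-x_i^{\sigma,k}(n)|\le\delta_i^k(n).
\]
All of these states lie in $K_x=[0,B_*]$ by Assumption~\ref{ass:bounded-risk-domain}, so the loss Lipschitz bound of Assumption~\ref{ass:w1-lipschitz-risk} applies. Couple $\eta_n$ and $\eta_n^{\sigma,k}$ through the same uniformly sampled vertex $o_n$. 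This gives
\[
W_1(\eta_n,\eta_n^{\sigma,k})\le \operatorname{Lip}_\ell\, n^{-1}\sum_i\delta_i^k(n),
\]
and multiplying by $L_\varrho$ yields the bound on $\beta_{n,k}^\sigma$.

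\emph{Limiting bias.} On the limiting tree, Theorem~\ref{thm:Lleq1} (via Proposition~\ref{prop:coalescence-conditions}) gives $x_o^{-,k}\le x_o^\star\le x_o^{+,k}$, so $|x_o^\star-x_o^{\sigma,k}|\le\delta_o^k$. Lemma~\ref{lem:gap-subrecursion} gives $\delta_o^k\le H_k^B(o)$. The proof of Proposition~\ref{prop:coalescence-conditions}, using the branching property, identifies $\mathbb E[H_k^B(o)]=\Gamma_k(B)$. Coupling $\eta$ and $\eta^{\sigma,k}$ through the same root then gives $W_1(\eta^{\sigma,k},\eta)\le\operatorname{Lip}_\ell\Gamma_k(B)$, which yields the bound on $\beta_k^\sigma$.

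\emph{Assembly and the main obstacle.} For \eqref{eq:nonexpansive-finite-limit}, I use \eqref{eq:rate-decomp} with the $\sigma$-iterates. The fixed-depth term satisfies $E_{n,k}^\sigma\le L_\varrho W_1(\eta_n^{\sigma,k},\eta^{\sigma,k})\xrightarrow{\mathbb P}0$. This holds because Lemma~\ref{lem:lwc-upper-lower-iterates}, extended to include the type coordinate as in Corollary~\ref{coro:joint-type-output-convergence}, gives joint convergence of $(\tau,x^{\sigma,k})$. Pushing forward through the continuous loss map gives weak convergence in probability on $K_\ell$, which is equivalent to $W_1$ convergence there.

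The main obstacle is $\beta_{n,k}^\sigma$. Its empirical bracket width must be shown to be at most $\Gamma_k(B)+o_{\mathbb P}(1)$. The map $\xi\mapsto\delta_o^k$ is a continuous local observable, by Lemma~\ref{lem:fixed-depth-continuity}, and on $K_x$ it is bounded by $B_*$. So $n^{-1}\sum_i\delta_i^k(n)\xrightarrow{\mathbb P}\mathbb E[\delta_o^k]$, and $\mathbb E[\delta_o^k]\le\mathbb E[H_k^B(o)]=\Gamma_k(B)$. Combining the three terms gives
\[
|\varrho(\eta_n)-\varrho(\eta)|\le L_\varrho\operatorname{Lip}_\ell\bigl(\Gamma_k(B)+o_{\mathbb P}(1)\bigr)+o_{\mathbb P}(1)+L_\varrho\operatorname{Lip}_\ell\Gamma_k(B),
\]
which is \eqref{eq:nonexpansive-finite-limit}. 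The care needed here is to use the bounded risk domain to justify both the boundedness of the local observable and the compact-support equivalence of weak and $W_1$ convergence.
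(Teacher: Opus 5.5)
Your proposal is correct and follows essentially the same route as the paper. You sandwich the selected equilibrium between the \(\sigma\)-iterates, couple on the same network to obtain the two \(W_1\) bias bounds via Lemma~\ref{lem:gap-subrecursion} and the branching identity \(\mathbb E[H_k^B(o)]=\Gamma_k(B)\), and close the decomposition using local convergence of both the empirical bracket width and the fixed-depth loss laws. The one detail the paper makes explicit that you leave implicit is the test function: since the nonexpansive mark space has no common envelope, the gap is replaced by \(B_*\wedge(\Phi_k^+-\Phi_k^-)\), which lies in \(C_b(\widetilde{\mathcal G}_*)\) and agrees with \(\delta_o^k\) on the supports of \(\mathcal P_n\) and \(\mathcal P\).
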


\begin{proof}
The finite and limiting sandwiches give
\(|x_i^{\star}(n)-x_i^{\sigma,k}(n)|
\leq\delta_i^{k}(n)\) and
\(|x_o^\star-x_o^{\sigma,k}|\leq\delta_o^{k}\).
Coupling each iterate and equilibrium on the same network yields
\[
\begin{aligned}
W_1\bigl(\eta_n^{\sigma,k},\eta_n\bigr)
&\leq\operatorname{Lip}_\ell
\frac1n\sum_{i=1}^n\delta_i^{k}(n), \quad
W_1\bigl(\eta^{\sigma,k},\eta\bigr)
\leq\operatorname{Lip}_\ell\mathbb E[\delta_o^{k}]
\leq\operatorname{Lip}_\ell\Gamma_k(B).
\end{aligned}
\]
The last inequality follows from
Lemma~\ref{lem:gap-subrecursion} and the branching
representation of \(\Gamma_k(B)\).
Applying \eqref{eq:w1-lipschitz-risk} proves the two bias bounds.

For fixed \(k\), the functional
\(\Psi_k:=B_*\wedge(\Phi_k^+-\Phi_k^-)\) is bounded
and continuous and agrees with the gap on the supports
of \(\mathcal P_n\) and \(\mathcal P\).
Thus marked local convergence gives
\[
\frac1n\sum_{i=1}^n\delta_i^{k}(n)
=\langle\mathcal P_n,\Psi_k\rangle
\xrightarrow{\mathbb P}\mathbb E[\delta_o^{k}].
\]
Also, \(E_{n,k}^\sigma\xrightarrow{\mathbb P}0\) by
fixed-depth convergence on the common compact loss domain.
The triangle inequality now gives
\eqref{eq:nonexpansive-finite-limit}; its remainder can be
bounded by the nonnegative quantity
\[
L_\varrho\operatorname{Lip}_\ell
\Bigl|\frac1n\sum_i\delta_i^{k}(n)
-\mathbb E[\delta_o^{k}]\Bigr|
+E_{n,k}^\sigma,
\]
which tends to zero in probability.
\end{proof}

The condition \(r_1(T_W)<1\) gives \(\Gamma_k(B)\to0\).
Accordingly, \eqref{eq:nonexpansive-finite-limit} combines
a deterministic limiting truncation bound with a vanishing
fixed-depth remainder. It is not a finite-\(n\) error rate.
No monotonicity of the loss transformations is needed for
these absolute-error bounds.

\section{Numerical Experiments}
\label{sec:numerical}

We examine the local approximation in two network models: a
sector-calibrated production network and a row-normalized
Eisenberg--Noe-type (EN-type) payment system. The first two
subsections study finite-network approximation as the network size
\(n\) increases and compare the runtime and accuracy of
whole-network and branching-process (BP) calculations. The final
subsection examines recursion-depth truncation and the bounds in
Section~\ref{sec:coalescence-approximation}.

In each finite-network replication, we generate an independent graph
and one realization of its vertex and edge marks. We report mean
loss and \(\CVaR_{0.95}\) of the empirical loss law, with
\[
\CVaR_{0.95}(\eta_n)
=
\CVaR_{0.95}\!\Bigl(
\frac1n\sum_{i=1}^n
\delta_{\ell_{\tau_i(n)}(x_i^{\star}(n))}
\Bigr).
\]
This is the conditional empirical risk in
Section~\ref{subsec:risk-functionals}; losses are not pooled
across shock scenarios before computing CVaR.
For production, \(\ell_t(x)=x\), so CVaR measures upper-tail
output rather than adverse low-output risk. The latter would
instead use \(\ell_t(x)=-x\). For the EN-type system,
\(\ell_t(x)=\bar p_t-x\) is payment shortfall.
We use \(x_i^{k}(n)\) and \(x_o^{k}\) for finite and limiting
iterates, and \(\eta^k\) for the limiting depth-\(k\) loss law.
Depth \(k\) means \(k\) applications of the update from the stated
initialization.

For the runtime comparisons, let \(\widehat R_r\) be the CVaR
estimate in replication \(r\) at a given computational budget,
and let \(\widehat R_{\mathrm{large}}\) be the corresponding
large-network reference. We measure accuracy by the root mean squared error (RMSE),
\[
\operatorname{RMSE}
=
\Bigl\{\frac1{N_{\mathrm{rep}}}
\sum_{r=1}^{N_{\mathrm{rep}}}
(\widehat R_r-\widehat R_{\mathrm{large}})^2
\Bigr\}^{1/2}.
\]
The reference is itself a numerical estimate, not an exact
limiting risk value. Timings were recorded on a 10-core arm64
host running macOS~26.5.2, Python~3.13.5, and NumPy~2.2.3,
with numerical-library thread counts fixed at one.
The comparisons concern distributional risk approximation;
BP calculations do not recover the full equilibrium vector of
a prescribed finite network.

The finite generators use Poisson supplier or dependency counts
rather than the Bernoulli construction in
Section~\ref{subsec:directed-marked-irg}.
Self-links are suppressed in both models. EN-type dependencies
are sampled without replacement, whereas production suppliers
are sampled with replacement and repeated draws are merged by
adding their primitive exposures. Since every type has positive
\(\mu\)-mass, the corresponding limiting kernels are
\[
\kappa_{\mathrm{prod}}(s,t)=\frac{8P_{st}}{\mu(\{t\})},
\qquad
\kappa_{\mathrm{EN}}(s,t)=\frac{\lambda_s q_{st}}{\mu(\{t\})},
\]
where \(P_{st}\), \(\lambda_s\), and \(q_{st}\) are specified
below. For fixed exploration size \(K\), repeated-label and
cross-exploration discrepancies have probability \(O(K^2/n)\).
Together with tightness of the explored population, the coupling
argument in Appendix~\ref{sec:lwc-irg-proof} yields the same
multitype branching-process limits.

\subsection{Production-network approximation}
\label{sec:num-production}

\paragraph{Design.}
We use a heterogeneous version of
\eqref{eq:production-log-linear}, with type-dependent response
coefficients and bounded nonnegative productivity increments.
The increments form a reduced-form stress design, not a
calibration of historical log-productivity shocks.
The type law is calibrated from 2023 Census County Business
Patterns (CBP) establishment counts, supplemented by 2023 Bureau of Labor Statistics Quarterly Census of Employment
and Wages (QCEW) counts for
Farms (\texttt{111CA}) and Rail transportation (\texttt{482}).
Input coefficients come from the 2023 Bureau of Economic Analysis summary direct-requirements table
\citet{BEA2023,USCensusCBP2023,BLSQCEW2023}.
The QCEW rail count covers establishments in the state
unemployment-insurance system and is used as a sector-weight
proxy, not a comprehensive rail-establishment count.
Removing imputed-housing and government sectors leaves
\(65\) private-sector types.
For the nonnegative requirement \(a_{ts}\) from supplier
sector \(t\) to buyer sector \(s\), set
\(\alpha_s:=\sum_t a_{ts}\) and \(P_{st}:=a_{ts}/\alpha_s\).
A type-\(s\) establishment draws
\(N_t^{(s)}\sim\operatorname{Poisson}(8P_{st})\) suppliers
of type \(t\), assigning equal primitive exposure to each
supplier draw before merging duplicates and normalizing.

At a type-\(s\) vertex \(v\), draw
\(\widetilde\epsilon_v\sim
\operatorname{Gamma}(r_s^\Gamma,\theta_s^\Gamma)\)
in the shape--scale convention and set
\begin{equation}
\label{eq:num-production-bounded-shock}
b_v:=\min\{\widetilde\epsilon_v,1\},
\qquad
\epsilon_v:=b_v/\alpha_s,
\qquad f_s(z)=\alpha_s z.
\end{equation}
The shapes range from \(1.8\) to \(3.0\), and the scales
from \(0.0314127\) to \(0.0877168\); all type-specific
values are recorded in the replication files.
The parameters are rescaled to give an establishment-weighted
mean first increment of \(0.14\), rather than to estimate
historical sectoral volatility.
The finite-network update is therefore
\begin{equation}
\label{eq:num-production-iteration}
x_i^{k+1}(n)
=
b_i(n)+
\alpha_{\tau_i(n)}
\sum_{j=1}^n W_{ij}(n)x_j^{k}(n),
\qquad x_i^{0}(n)=0.
\end{equation}
This is \eqref{eq:finite-iteration}, since
\(b_i(n)=f_{\tau_i(n)}(\epsilon_i(n))\).
Only the sampled increment is capped; the response remains linear.
The smallest and largest coefficients are approximately
\(0.187558\) and \(0.774077\), respectively. Thus
\(L:=\max_s\alpha_s<1\) and
\(|\epsilon_v|\leq\bar\epsilon:=1/\min_s\alpha_s\).
Displayed calibration constants are rounded; the symbolic bounds
refer to the unrounded coefficients.
Together with marked local convergence, these conditions
verify Theorem~\ref{thm:L<1} and
Assumption~\ref{ass:bounded-risk-domain}.

We use \(50\) logarithmically spaced sizes between \(100\)
and \(10{,}000\), with \(20\) independent graph--mark
realizations per size. Deterministic type counts retain all
\(65\) types, overrepresenting rare types in the smallest
graphs. The discrepancy of these type proportions from
\(\mu\) is \(O(1/n)\).
The fixed BP reference averages five depth-\(32\) calculations
with \(M=2\times10^5\) reported roots. These calculations
reuse type-specific support pools and are finite-particle
approximations, not independent exact samples from the
limiting equilibrium law.

\paragraph{Results.}
Figure~\ref{fig:num-production-finite} shows the finite-network
mean and CVaR approaching the BP reference levels
\(0.249065\) and \(0.562221\), with decreasing
cross-realization dispersion.
As a separate large-network check, \(20\) independent graphs
of size \(n=8{,}432{,}428\) give mean output \(0.249135\)
and \(\CVaR_{0.95}=0.561906\), compared with BP estimates
\(0.249068\) and \(0.561998\) at \(M=10^5\).

\begin{figure}[tbp]
\centering
\includegraphics[width=0.9\linewidth]
{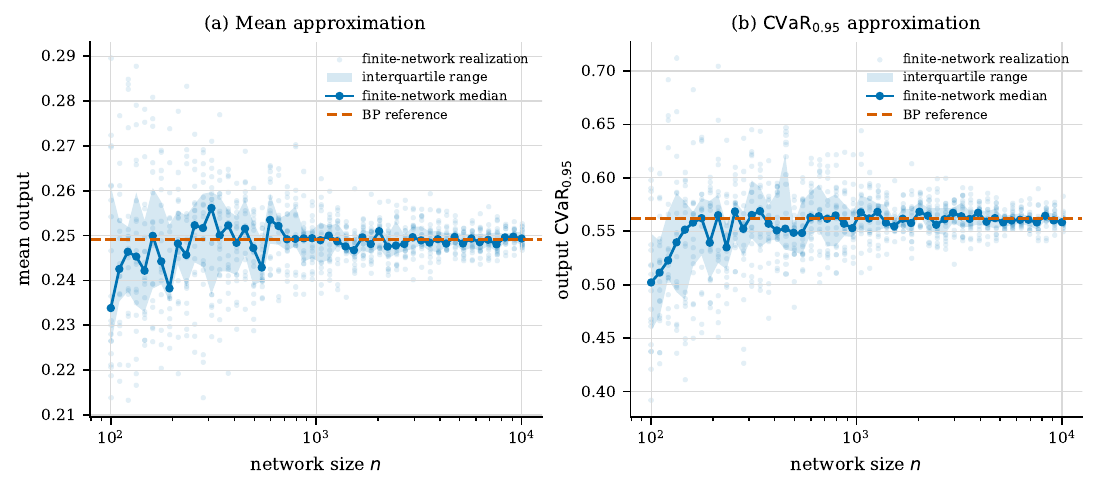}
\caption{Production-network approximation. Points are independent
graph--mark realizations; solid curves and shaded regions show
within-size medians and interquartile ranges. Dashed lines are
numerical BP references. Network-size axes are logarithmic.}
\label{fig:num-production-finite}
\end{figure}

\paragraph{Runtime and accuracy.}
The whole-network design uses one discarded warm-up and eight
measured graphs at each of ten equally spaced sizes through
\(10^6\), followed by \(n=1.1,1.5,2\) million. Size order
is randomized within replicate blocks.
The accuracy reference is the mean CVaR over \(20\)
independent graphs of size \(8{,}432{,}428\).
The whole-network clock includes graph construction, shock
generation, equilibrium solution, and risk evaluation;
the BP clock also includes its final risk evaluation.
All BP timing points use depth \(32\).

Median BP times at
\(M=5{,}000,20{,}000,100{,}000,200{,}000\) are
\(0.207,0.390,1.364,2.572\) seconds, respectively.
On the measured grid, whole-network median runtime first
exceeds these times at
\(n=2\times10^5,3\times10^5,9\times10^5,2\times10^6\).
At \(M=2\times10^5\), BP CVaR RMSE is \(0.001186\).
Figure~\ref{fig:num-production-runtime} relates runtime to
the empirical accuracy attained at each computational budget.

\begin{figure}[tbp]
\centering
\includegraphics[width=0.9\linewidth]
{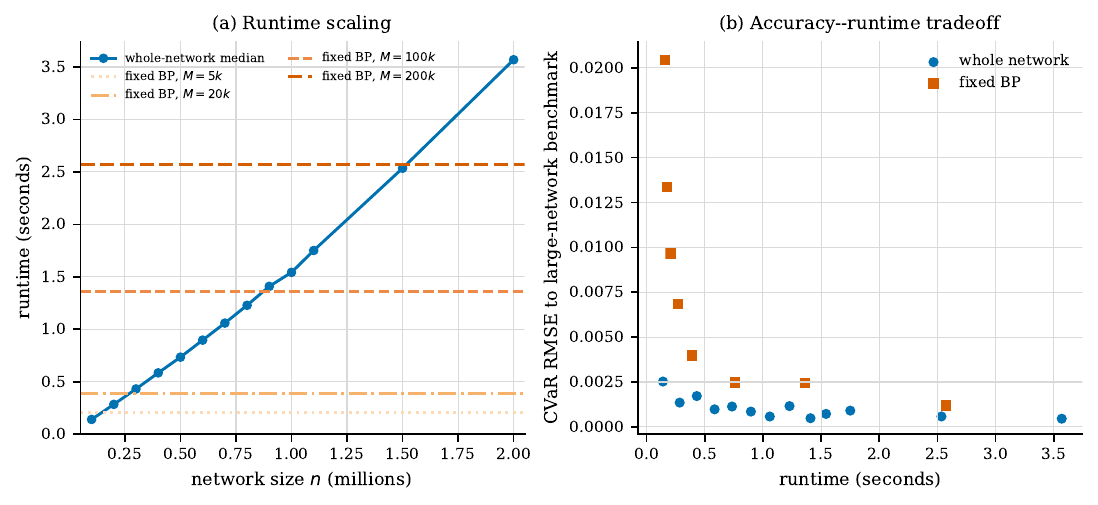}
\caption{Production runtime and accuracy. Panel (a) shows
whole-network median times and fixed-BP times. Panel (b) shows
CVaR RMSE relative to the mean over \(20\) independent
\(8{,}432{,}428\)-node graphs. BP squares correspond to
\(M=1{,}000,2{,}000,5{,}000,10{,}000,20{,}000,50{,}000,
10^5,2\times10^5\); whole-network circles vary \(n\).
Both panels use linear axes.}
\label{fig:num-production-runtime}
\end{figure}

\subsection{Row-normalized EN-type approximation}
\label{sec:num-en}

\paragraph{Design.}
The stylized clearing network has core and peripheral types
with probabilities \((0.1,0.9)\), conditional mean degrees
\((\lambda_1,\lambda_2)=(5.9,1.45)\), and neighbor-type
probabilities
\[
(q_{st})_{s,t=1}^2
=
\begin{pmatrix}
0.0847457627&0.9152542373\\
0.0689655172&0.9310344828
\end{pmatrix}.
\]
At a type-\(s\) vertex, the sampled
\(\operatorname{Poisson}(\lambda_s)\) degree is capped at \(n-1\).
Dependency types are drawn sequentially according to \(q_{s\cdot}\),
renormalized over types with remaining eligible nonself labels.
Labels are sampled uniformly without replacement within each selected
type. The matrix \(q\) is fixed across the experiments.
Nominal payment caps are \((\bar p_1,\bar p_2)=(12,4)\).
Conditional on vertex types, external assets are independent
and satisfy
\[
Z_v\sim\operatorname{Gamma}(r_s,\theta_s),
\qquad a_v=(\gamma_s-Z_v)_+,
\qquad \tau_v=s,
\]
where the Gamma law uses shape--scale parameters
\(\boldsymbol r=(6,2)\),
\(\boldsymbol\theta=(0.5,0.35)\), and
\(\boldsymbol\gamma=(5,1.8)\).
All present dependencies in an outgoing row receive equal primitive
exposures. Thus, if \(D_i(n)\) is the realized out-degree,
\(W_{ij}(n)=1/D_i(n)\) for each selected dependency
when \(D_i(n)>0\); otherwise the network input is zero.
The finite-network and BP calculations use this same weighting rule.
The implemented update is
\begin{equation}
\label{eq:num-en-iteration}
x_i^{k+1}(n)
=
\min\Bigl\{
\bar p_{\tau_i(n)},\,
\Bigl(a_i(n)+\sum_{j=1}^n
W_{ij}(n)x_j^{k}(n)\Bigr)_+
\Bigr\},
\qquad x_i^{0}(n)=0.
\end{equation}
Thus \(\epsilon_v=a_v\) and
\(f_s(z)=\min\{\bar p_s,z_+\}\).
Here \(x_v\) is a payment amount, not a repayment fraction;
the experiment concerns a row-normalized clearing-type map,
not an unrestricted Eisenberg--Noe liabilities network.
The responses satisfy the bounded nonexpansive assumptions,
with \(B_s=\bar p_s\), and the loss is
\(\ell_s(x)=\bar p_s-x\).
For the finite-type Poisson limit, root coalescence follows
from Corollary~\ref{coro:bounded-kernel-irg-coalescence}.


The finite-network experiment uses \(13\) sizes from
\(n=50\) to \(5{,}000\), with \(30\) independent
graph--mark realizations per size. For each graph, the
endowment field is sampled independently conditional on types,
and CVaR is evaluated across its \(n\) node losses.
The dashed BP reference is a prespecified \(60\)-update
population-dynamics calculation with \(2\times10^6\)
particles per type and \(2\times10^6\) reported roots.
Four additional independent-seed calculations assess its
stability. Across the five runs, mean shortfall and CVaR
average \(1.522542\) and \(7.743522\), with standard
deviations \(0.001414\) and \(0.002227\), respectively.
Reported roots reuse type-specific particle pools, so these
are finite-particle reference calculations rather than exact
limiting laws.

\paragraph{Results.}
Figure~\ref{fig:num-en-finite} shows concentration toward the
BP reference values \(1.522734\) for mean shortfall and
\(7.744048\) for CVaR. Between \(n=50\) and \(5{,}000\),
the median absolute gap decreases from \(0.321364\) to
\(0.023319\) for mean shortfall and from \(0.766859\)
to \(0.057367\) for CVaR.
Across the \(390\) finite-network solves, the largest
lower--upper endpoint gap is \(2.33\times10^{-10}\);
the largest lower- and upper-solution fixed-point residuals
are \(7.42\times10^{-11}\) and \(3.96\times10^{-11}\).
These diagnostics concern numerical agreement of the extremal
solutions, not an independent proof of exact uniqueness.

\begin{figure}[ht]
\centering
\includegraphics[width=0.9\linewidth]
{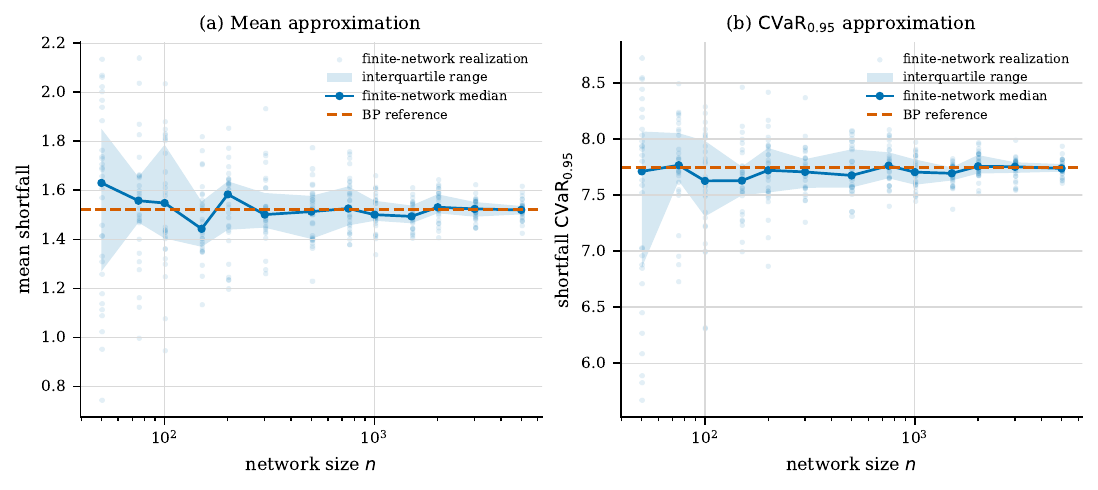}
\caption{EN-type finite-network approximation. Each point is an
independent graph--mark realization. Curves and shaded regions
show within-size medians and interquartile ranges across \(30\)
realizations. Dashed lines are numerical BP references, and
network-size axes are logarithmic.}
\label{fig:num-en-finite}
\end{figure}

\paragraph{Runtime and accuracy.}
The whole-network design uses one discarded warm-up and eight
measured realizations at each of \(16\) sizes: ten equally
spaced sizes from \(10^5\) through \(10^6\), followed by
\(1.25,1.5,1.75,2,2.5,3\) million nodes.
Figure~\ref{fig:num-en-runtime} displays sizes through
\(2\times10^6\); the remaining two are retained as diagnostics.
The accuracy reference is computed from a disjoint set of
\(20\) independent \(3\times10^6\)-node realizations,
giving \(\widehat R_{\mathrm{large}}=7.744923\).
The whole-network clock includes graph generation, the primary
least-payment iteration, and risk evaluation. The two-endpoint
coalescence check is timed separately and excluded.
The BP clock includes depth-\(8\) independent-tree sampling
and risk evaluation, unlike the depth-\(60\) population-dynamics
reference used in the finite-size experiment.

Median BP times at
\(M=2\times10^4,10^5,5\times10^5,10^6,2\times10^6\)
are \(0.185,0.950,4.925,9.867,21.719\) seconds, respectively.
On the measured grid, whole-network median time first exceeds
the first two levels at \(n=10^5\), and the remaining
levels at \(n=2\times10^5,4\times10^5,9\times10^5\).
At \(M=2\times10^6\), BP CVaR RMSE is \(0.003697\).
Accuracy is estimated from \(20\) replications; adjacent
budgets need not produce monotone empirical RMSE values.

\begin{figure}[ht]
\centering
\includegraphics[width=0.9\linewidth]
{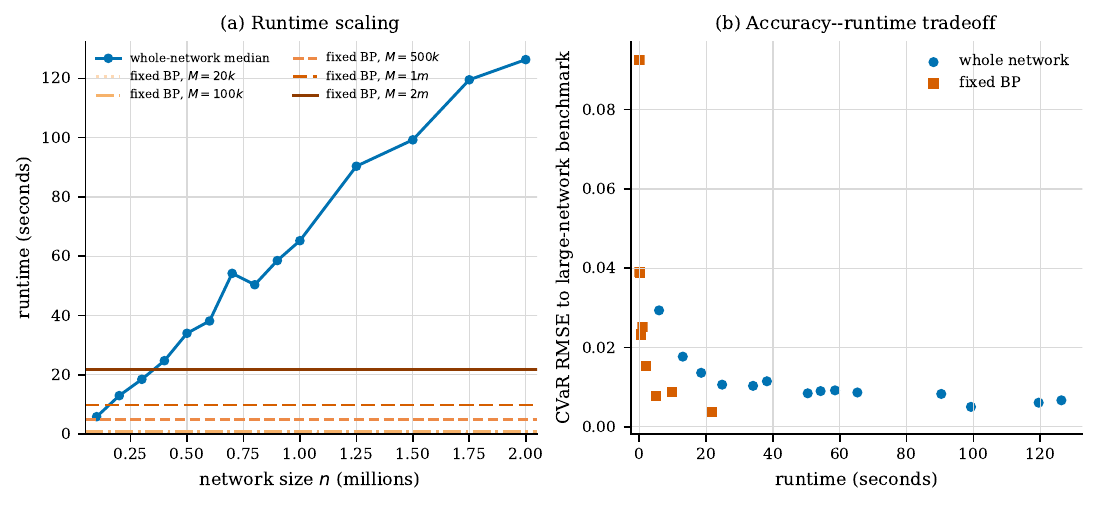}
\caption{EN-type runtime and accuracy. Panel (a) shows
whole-network median times and fixed-BP times. Panel (b) shows
CVaR RMSE relative to the mean over \(20\) independent
\(3\times10^6\)-node realizations. BP squares correspond to
\(M=5{,}000,10{,}000,20{,}000,50{,}000,10^5,2\times10^5,
5\times10^5,10^6,2\times10^6\); whole-network circles vary
\(n\). Both panels use linear axes.}
\label{fig:num-en-runtime}
\end{figure}

\subsection{Recursion-depth truncation}
\label{sec:num-bp-truncation}

We now hold the model fixed and vary the recursion depth.
For production, each truncated output is compared with the
numerical equilibrium on the same finite graph and shock
realization, using \(20\) newly generated
\(10^5\)-node networks. For the EN-type model, coupled lower
and upper recursions are evaluated directly on the BP limit.
The BP certificates concern the exact limiting laws;
finite-particle approximations introduce additional numerical error.

\paragraph{Production network.}
The update \eqref{eq:num-production-iteration} has
\(L_t=\alpha_t\) and \(L\approx0.774077\).
Its first increment satisfies
\(|x_i^{1}(n)-x_i^{0}(n)|\leq d_*:=1\), with the same
bound at the limiting root. The contraction argument therefore gives
\[
\|x^{\star}(n)-x^{k}(n)\|_\infty
\vee |x_o^\star-x_o^{k}|
\leq d_*\frac{L^k}{1-L}.
\]
This improves the generic shock-cap bound
\(\bar\epsilon L^{k+1}/(1-L)\) for this specification.
A second bound retains type-dependent sensitivities and
normalized influence along the branching process.
The total offspring count is Poisson with mean \(8\).
Equal primitive weights give expected total weight
\((1-e^{-8})P_{st}\) from a type-\(s\) parent to type-\(t\)
children, and hence the response-weighted matrix
\[
\mathbf M_L^{\mathrm{prod}}
:=\bigl(\alpha_s(1-e^{-8})P_{st}\bigr)_{s,t},
\qquad r(\mathbf M_L^{\mathrm{prod}})\approx0.410398.
\]
The factor \(1-e^{-8}\) accounts for zero-degree vertices;
it is not the survival probability of the entire tree.
Let \(\boldsymbol\mu=(\mu(\{t\}))_t\) and
\(\mathbf m=(m_t)_t\), where
\(m_t:=\mathbb E[b_o\mid\tau_o=t]=q_\epsilon(t)\).
The equality holds because \(b_o=\alpha_t\epsilon_o\geq0\)
and \(f_t(z)=\alpha_t z\).
With \(\operatorname{Lip}_\ell=1\) and \(L_\varrho=20\),
the finite-type specialization of
\eqref{eq:contraction-min-risk-bound} yields
\begin{equation}
\label{eq:num-production-depth-bound}
\begin{aligned}
\bigl|\CVaR_{0.95}(\eta)-\CVaR_{0.95}(\eta^k)\bigr|\leq
\min\Bigl\{
d_*\frac{L^k}{1-L},\,
20\boldsymbol\mu^\top
(\mathbf M_L^{\mathrm{prod}})^k
(\mathbf I-\mathbf M_L^{\mathrm{prod}})^{-1}\mathbf m
\Bigr\}.
\end{aligned}
\end{equation}
The pathwise bound uses monotonicity and cash invariance of
CVaR, whereas the expected-error bound uses its
\(W_1\)-Lipschitz constant \(20\).
Only the pathwise term applies directly to both finite and
limiting networks. The matrix term bounds the BP truncation
bias; its overlay on finite-network errors is a diagnostic,
not a finite-graph guarantee.
A depth satisfying \eqref{eq:num-production-depth-bound}
certifies the limiting truncation bias, excluding finite-network
and Monte Carlo error.
The smallest certified depths are \(7\) for tolerance
\(0.025\) and \(8\) for tolerance \(0.01\).

The empirical median CVaR bias decreases from \(0.5621\)
at \(k=0\) to \(4.00\times10^{-5}\) at \(k=10\)
and \(5.41\times10^{-9}\) at \(k=20\).
The corresponding certificates are
\(4.426\), \(7.22\times10^{-4}\), and
\(9.80\times10^{-8}\).
At depths \(10\) and \(20\), the matrix term gives the
smaller certificate, reflecting the response-weighted spectral
radius rather than the worst-type factor \(L\).

\paragraph{Payment-clearing system.}
No uniform response Lipschitz constant below one is available.
We instead couple lower and upper recursions using the same
offspring, marks, and child selections, with
\(5\times10^5\) particles per type and eight replications.
The initializations are \(x_v^{-,0}=0\) and
\(x_v^{+,0}=B_{\tau_v}\), where
\(\boldsymbol B^{\mathrm{EN}}=(12,4)^\top\).
Under the equal-exposure specification in
Section~\ref{sec:num-en},
the weighted offspring matrix is
\[
\mathbf M^{\mathrm{EN}}
=\bigl((1-e^{-\lambda_s})q_{st}\bigr)_{s,t=1}^2,
\qquad r(\mathbf M^{\mathrm{EN}})\approx0.7817465.
\]
The matrix-based values below use this equal-exposure
specialization. By Lemma~\ref{lem:gap-subrecursion},
\(\mathbb E[x_o^{+,k}-x_o^{-,k}]
\leq\boldsymbol\mu^\top(\mathbf M^{\mathrm{EN}})^k
\boldsymbol B^{\mathrm{EN}}\).
Since shortfall is decreasing in payments, the limiting risk
lies between the risks of the upper and lower payment iterates.
Their width satisfies
\begin{equation}
\label{eq:num-en-depth-bound}
0\leq
\CVaR_{0.95}(\eta^{-,k})
-\CVaR_{0.95}(\eta^{+,k})
\leq
20\boldsymbol\mu^\top
(\mathbf M^{\mathrm{EN}})^k\boldsymbol B^{\mathrm{EN}}.
\end{equation}
Here \(\eta^{\pm,k}\) are the loss laws generated by
\(x_o^{\pm,k}\), and
\(\boldsymbol\mu=(0.1,0.9)^\top\), so
\(\boldsymbol\mu^\top\boldsymbol B^{\mathrm{EN}}=4.8\).
The certificate is \(20\Gamma_k(B)\), using the
type-dependent envelope rather than a common scalar cap.
The estimated coupled BP risk bracket decreases from \(12\)
at \(k=0\) to \(4.47\times10^{-5}\) at \(k=10\)
and \(1.09\times10^{-12}\) at \(k=20\), compared with
bounds \(96.000\), \(7.849\), and \(0.669\).
The spectral certificate is substantially more conservative
than the observed bracket.

\begin{figure}[tbp]
\centering
\includegraphics[width=0.9\linewidth]
{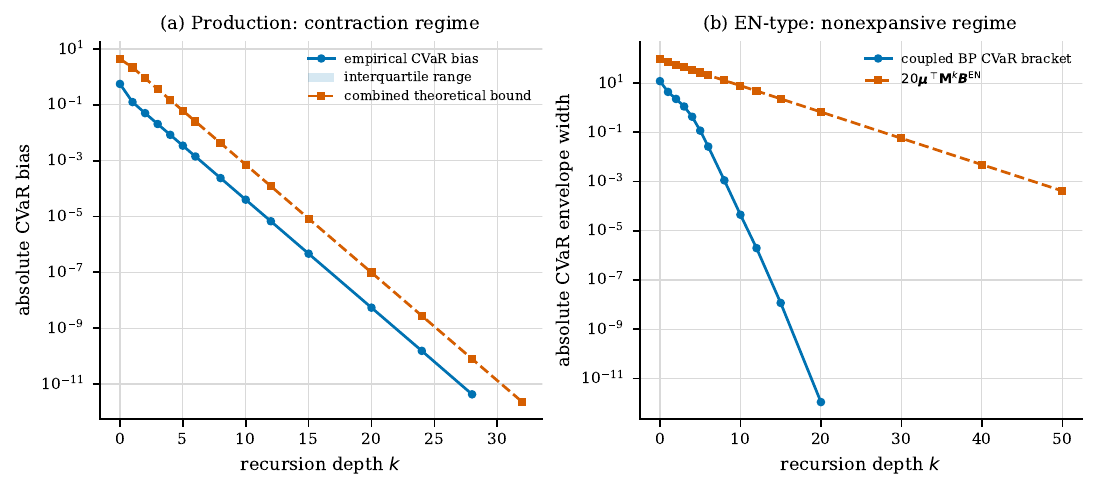}
\caption{Recursion-depth truncation. Panel (a) shows production
median CVaR bias and interquartile ranges over \(20\) new
\(10^5\)-node realizations, with the limiting-BP certificate
\eqref{eq:num-production-depth-bound}; the finite-network
comparison with its matrix term is diagnostic. Panel (b) shows
the estimated coupled EN-type risk bracket and the equal-exposure
certificate \(20\boldsymbol\mu^\top(\mathbf M^{\mathrm{EN}})^k
\boldsymbol B^{\mathrm{EN}}\), with
\(\boldsymbol B^{\mathrm{EN}}=(12,4)^\top\).
Vertical axes are logarithmic.}
\label{fig:num-bp-truncation}
\end{figure}

The experiments support finite-network convergence to the local
BP reference in both stability regimes. The runtime comparisons
quantify the model-dependent accuracy--cost tradeoff, while the
depth experiments distinguish practical truncation behavior from
its analytical bounds. In particular, the EN-type bound guarantees
decay but is conservative at the tested depths.

\section{Conclusion}
\label{sec:conclusion}

We developed a local approximation framework for equilibrium and risk
evaluation in large sparse directed economic networks. Under marked
local weak convergence, we established convergence in probability of
the empirical equilibrium distribution in both the contractive and
monotone nonexpansive regimes. Under additional bounded-domain and continuity assumptions,
these results imply convergence of cross-sectional loss distributions
and law-invariant risk measures, including conditional value-at-risk.

On branching-process limits, weighted offspring operators provide
spectral criteria for root coalescence and quantitative recursion-depth
bounds. We verified coalescence for bounded-kernel directed random-graph
limits under an integrable type-dependent response envelope.
The approximation bounds distinguish truncation error from fixed-depth
local approximation error and show how weighted network structure can
improve worst-case contraction estimates. Experiments on production
networks and a row-normalized payment-clearing system illustrate
convergence toward local-limit benchmarks and the accuracy--computation
tradeoff of local calculations. They also show that sufficient spectral
bounds can be conservative relative to observed truncation errors.

Several extensions are natural. Quantitative finite-network estimates
and sampling-error bounds would complement the recursion-depth analysis
and support the joint selection of neighborhood depth and simulation
budget. Richer financial models could incorporate price-mediated
feedback from fire sales, requiring control of the interaction between
local exposures and aggregate asset prices
\citep{amini2025fire,amini2016uniqueness}.
Another direction is targeted network intervention
\citep{galeotti2020targeting}. Building on our local intervention
framework for large sparse production networks \citep{wu2026sampled},
extending these methods to nonlinear clearing and fire-sale systems
would require equilibrium stability and approximation guarantees
uniform over admissible policies, providing a basis for scalable
systemic-risk mitigation.

\section*{Acknowledgement}
The authors acknowledge support from NSF award No. 2534856.
\bibliography{bibtex}

\begin{thebibliography}{36}
\providecommand{\natexlab}[1]{#1}
\providecommand{\url}[1]{\texttt{#1}}
\expandafter\ifx\csname urlstyle\endcsname\relax
  \providecommand{\doi}[1]{doi: #1}\else
  \providecommand{\doi}{doi: \begingroup \urlstyle{rm}\Url}\fi

\bibitem[Acemoglu et~al.(2012)Acemoglu, Carvalho, Ozdaglar, and
  Tahbaz-Salehi]{Acemoglu2012}
Daron Acemoglu, Vasco~M. Carvalho, Asuman Ozdaglar, and Alireza Tahbaz-Salehi.
\newblock The network origins of aggregate fluctuations.
\newblock \emph{Econometrica}, 80\penalty0 (5):\penalty0 1977--2016, 2012.
\newblock \doi{10.3982/ECTA9623}.

\bibitem[Acemoglu et~al.(2016)Acemoglu, Ozdaglar, and
  Tahbaz-Salehi]{AcemogluOzdaglarTahbazSalehi2016}
Daron Acemoglu, Asuman Ozdaglar, and Alireza Tahbaz-Salehi.
\newblock Networks, shocks, and systemic risk.
\newblock In Yann Bramoull{\'e}, Andrea Galeotti, and Brian~W. Rogers, editors,
  \emph{The Oxford Handbook of the Economics of Networks}, chapter~21, pages
  569--608. Oxford University Press, Oxford, 2016.
\newblock ISBN 978-0-19-994827-7.
\newblock \doi{10.1093/oxfordhb/9780199948277.013.17}.
\newblock URL \url{https://doi.org/10.1093/oxfordhb/9780199948277.013.17}.

\bibitem[Aldous and Steele(2004)]{aldous2004objective}
David Aldous and J.~Michael Steele.
\newblock The objective method: Probabilistic combinatorial optimization and
  local weak convergence.
\newblock In Geoffrey Grimmett, editor, \emph{Probability on Discrete
  Structures}, volume 110 of \emph{Encyclopaedia of Mathematical Sciences},
  pages 1--72. Springer, Berlin, 2004.
\newblock \doi{10.1007/978-3-662-09444-0_1}.

\bibitem[Aldous and Bandyopadhyay(2005)]{aldous2005survey}
David~J. Aldous and Antar Bandyopadhyay.
\newblock A survey of max-type recursive distributional equations.
\newblock \emph{The Annals of Applied Probability}, 15\penalty0 (2):\penalty0
  1047--1110, 2005.
\newblock \doi{10.1214/105051605000000142}.

\bibitem[Amann(1976)]{Amann1976}
Herbert Amann.
\newblock Fixed point equations and nonlinear eigenvalue problems in ordered
  {B}anach spaces.
\newblock \emph{SIAM Review}, 18\penalty0 (4):\penalty0 620--709, 1976.

\bibitem[Amini and Salavati(2024)]{amini2024dynamics}
Hamed Amini and Erfan Salavati.
\newblock Dynamics of cascading losses in locally tree-like networks.
\newblock \emph{SSRN Electronic Journal}, 2024.
\newblock \doi{10.2139/ssrn.4733393}.

\bibitem[Amini et~al.(2016)Amini, Filipovi{\'c}, and
  Minca]{amini2016uniqueness}
Hamed Amini, Damir Filipovi{\'c}, and Andreea Minca.
\newblock Uniqueness of equilibrium in a payment system with liquidation costs.
\newblock \emph{Operations Research Letters}, 44\penalty0 (1):\penalty0 1--5,
  2016.

\bibitem[Amini et~al.(2025)Amini, Cao, and Sulem]{amini2025fire}
Hamed Amini, Zhongyuan Cao, and Agnes Sulem.
\newblock Fire sales, default cascades and complex financial networks.
\newblock \emph{Mathematics and Financial Economics}, 19\penalty0 (2):\penalty0
  225--260, 2025.

\bibitem[Artzner et~al.(1999)Artzner, Delbaen, Eber, and Heath]{Artzner1999}
Philippe Artzner, Freddy Delbaen, Jean-Marc Eber, and David Heath.
\newblock Coherent measures of risk.
\newblock \emph{Mathematical Finance}, 9\penalty0 (3):\penalty0 203--228, 1999.
\newblock \doi{10.1111/1467-9965.00068}.

\bibitem[Ballester et~al.(2006)Ballester, Calv{\'o}-Armengol, and
  Zenou]{BallesterCalvoArmengolZenou2006}
Coralio Ballester, Antoni Calv{\'o}-Armengol, and Yves Zenou.
\newblock Who's who in networks. {W}anted: The key player.
\newblock \emph{Econometrica}, 74\penalty0 (5):\penalty0 1403--1417, 2006.
\newblock \doi{10.1111/j.1468-0262.2006.00709.x}.
\newblock URL \url{https://doi.org/10.1111/j.1468-0262.2006.00709.x}.

\bibitem[Benjamini and Schramm(2001)]{BenjaminiSchramm2001}
Itai Benjamini and Oded Schramm.
\newblock Recurrence of distributional limits of finite planar graphs.
\newblock \emph{Electronic Journal of Probability}, 6:\penalty0 1--13, 2001.
\newblock \doi{10.1214/EJP.v6-96}.

\bibitem[Bollob{\'a}s et~al.(2007)Bollob{\'a}s, Janson, and
  Riordan]{BollobasJansonRiordan2007}
B{\'e}la Bollob{\'a}s, Svante Janson, and Oliver Riordan.
\newblock The phase transition in inhomogeneous random graphs.
\newblock \emph{Random Structures \& Algorithms}, 31\penalty0 (1):\penalty0
  3--122, 2007.
\newblock \doi{10.1002/rsa.20168}.

\bibitem[Cao and Olvera-Cravioto(2020)]{CaoOlveraCravioto2020}
Junyu Cao and Mariana Olvera-Cravioto.
\newblock Connectivity of a general class of inhomogeneous random digraphs.
\newblock \emph{Random Structures \& Algorithms}, 56\penalty0 (3):\penalty0
  722--774, 2020.
\newblock \doi{10.1002/rsa.20892}.

\bibitem[Chen et~al.(2013)Chen, Iyengar, and Moallemi]{chen2013axiomatic}
Chen Chen, Garud Iyengar, and Ciamac~C. Moallemi.
\newblock An axiomatic approach to systemic risk.
\newblock \emph{Management Science}, 59\penalty0 (6):\penalty0 1373--1388, June
  2013.

\bibitem[Eisenberg and Noe(2001)]{Eisenberg2001Systemic}
Larry Eisenberg and Thomas~H. Noe.
\newblock Systemic risk in financial systems.
\newblock \emph{Management Science}, 47\penalty0 (2):\penalty0 236--249, 2001.
\newblock \doi{10.1287/mnsc.47.2.236.9835}.

\bibitem[Elliott et~al.(2014)Elliott, Golub, and
  Jackson]{ElliottGolubJackson2014}
Matthew Elliott, Benjamin Golub, and Matthew~O. Jackson.
\newblock Financial networks and contagion.
\newblock \emph{American Economic Review}, 104\penalty0 (10):\penalty0
  3115--3153, 2014.
\newblock \doi{10.1257/aer.104.10.3115}.

\bibitem[Fraiman et~al.(2023)Fraiman, Lin, and
  Olvera-Cravioto]{FraimanLinOlveraCravioto2023}
Nicolas Fraiman, Tzu-Chi Lin, and Mariana Olvera-Cravioto.
\newblock Stochastic recursions on directed random graphs.
\newblock \emph{Stochastic Processes and their Applications}, 166:\penalty0
  104055, 2023.

\bibitem[Frittelli and Rosazza~Gianin(2005)]{Frittelli2005}
Marco Frittelli and Emanuela Rosazza~Gianin.
\newblock Law invariant convex risk measures.
\newblock In Shigeo Kusuoka and Akira Yamazaki, editors, \emph{Advances in
  Mathematical Economics}, volume~7, pages 33--46. Springer Tokyo, Tokyo, 2005.
\newblock ISBN 978-4-431-27233-5.
\newblock \doi{10.1007/4-431-27233-X_2}.
\newblock URL \url{https://doi.org/10.1007/4-431-27233-X_2}.

\bibitem[Galeotti et~al.(2020)Galeotti, Golub, and
  Goyal]{galeotti2020targeting}
Andrea Galeotti, Benjamin Golub, and Sanjeev Goyal.
\newblock Targeting interventions in networks.
\newblock \emph{Econometrica}, 88\penalty0 (6):\penalty0 2445--2471, 2020.
\newblock \doi{10.3982/ECTA16173}.

\bibitem[Garavaglia et~al.(2020)Garavaglia, van~der Hofstad, and
  Litvak]{garavaglia2020pagerank}
Alessandro Garavaglia, Remco van~der Hofstad, and Nelly Litvak.
\newblock Local weak convergence for {P}age{R}ank.
\newblock \emph{The Annals of Applied Probability}, 30\penalty0 (1):\penalty0
  40--79, 2020.
\newblock \doi{10.1214/19-AAP1494}.

\bibitem[Kallenberg(2017)]{kallenberg2017random}
Olav Kallenberg.
\newblock \emph{Random Measures, Theory and Applications}, volume~77 of
  \emph{Probability Theory and Stochastic Modelling}.
\newblock Springer, Cham, 2017.
\newblock \doi{10.1007/978-3-319-41598-7}.

\bibitem[Kusuoka(2001)]{Kusuoka2001}
Shigeo Kusuoka.
\newblock On law invariant coherent risk measures.
\newblock In Shigeo Kusuoka and Toru Maruyama, editors, \emph{Advances in
  Mathematical Economics}, volume~3, pages 83--95. Springer Japan, Tokyo, 2001.
\newblock ISBN 978-4-431-67891-5.
\newblock \doi{10.1007/978-4-431-67891-5_4}.
\newblock URL \url{https://doi.org/10.1007/978-4-431-67891-5_4}.

\bibitem[Lacker et~al.(2023)Lacker, Ramanan, and Wu]{LackerRamananWu2023}
Daniel Lacker, Kavita Ramanan, and Ruoyu Wu.
\newblock Local weak convergence for sparse networks of interacting processes.
\newblock \emph{The Annals of Applied Probability}, 33\penalty0 (2):\penalty0
  843--888, 2023.
\newblock \doi{10.1214/22-AAP1830}.

\bibitem[Leontief(1966)]{Leontief1966InputOutputEconomics}
Wassily Leontief.
\newblock \emph{Input-Output Economics}.
\newblock Oxford University Press, New York, 1966.

\bibitem[McNeil et~al.(2015)McNeil, Frey, Embrechts,
  et~al.]{mcneil2015quantitative}
Alexander~J McNeil, R{\"u}diger Frey, Paul Embrechts, et~al.
\newblock \emph{Quantitative risk management: Concepts, techniques and tools},
  volume~2.
\newblock Princeton university press Princeton, 2015.

\bibitem[Olvera-Cravioto(2022)]{OlveraCravioto2022}
Mariana Olvera-Cravioto.
\newblock Strong couplings for static locally tree-like random graphs.
\newblock \emph{Journal of Applied Probability}, 59\penalty0 (4):\penalty0
  1261--1285, 2022.
\newblock \doi{10.1017/jpr.2022.17}.

\bibitem[Rockafellar and Uryasev(2002)]{rockafellar2002cvar_general}
R.~Tyrrell Rockafellar and Stanislav Uryasev.
\newblock Conditional value-at-risk for general loss distributions.
\newblock \emph{Journal of Banking \& Finance}, 26\penalty0 (7):\penalty0
  1443--1471, 2002.
\newblock \doi{10.1016/S0378-4266(02)00271-6}.

\bibitem[Shen et~al.(2026)Shen, Van~Oosten, and Wang]{shen2026partial}
Yi~Shen, Zachary Van~Oosten, and Ruodu Wang.
\newblock Partial law invariance and risk measures.
\newblock \emph{Management Science}, 72\penalty0 (7):\penalty0 6251--6266,
  2026.

\bibitem[Tarski(1955)]{Tarski1955}
Alfred Tarski.
\newblock A lattice-theoretical fixpoint theorem and its applications.
\newblock \emph{Pacific Journal of Mathematics}, 5\penalty0 (2):\penalty0
  285--309, 1955.

\bibitem[{U.S. Bureau of Economic Analysis}(2023)]{BEA2023}
{U.S. Bureau of Economic Analysis}.
\newblock Direct domestic requirements, after redefinitions---summary: 2023 (in
  producers' prices).
\newblock Input--Output (After Redefinitions) Interactive Data Application,
  2023.
\newblock URL
  \url{https://apps.bea.gov/iTable/?Categories=AR\&isURI=1\&reqid=1602\&step=2}.
\newblock Accessed March 22, 2026.

\bibitem[{U.S. Bureau of Labor Statistics}(2024)]{BLSQCEW2023}
{U.S. Bureau of Labor Statistics}.
\newblock Employment and wages, annual averages 2023.
\newblock Quarterly Census of Employment and Wages, U.S. Department of Labor,
  2024.
\newblock URL
  \url{https://www.bls.gov/cew/publications/employment-and-wages-annual-averages/2023/}.
\newblock Accessed August 6, 2026.

\bibitem[{U.S. Census Bureau}(2023)]{USCensusCBP2023}
{U.S. Census Bureau}.
\newblock County business patterns: 2023.
\newblock County Business Patterns dataset, 2023.
\newblock URL
  \url{https://www.census.gov/data/datasets/2023/econ/cbp/2023-cbp.html}.
\newblock Accessed August 6, 2026.

\bibitem[van~der Hofstad(2024)]{Hofstad_2024}
Remco van~der Hofstad.
\newblock \emph{Random Graphs and Complex Networks, Volume 2}.
\newblock Cambridge Series in Statistical and Probabilistic Mathematics.
  Cambridge University Press, Cambridge, 2024.
\newblock ISBN 9781107174009.
\newblock \doi{10.1017/9781316795552}.

\bibitem[Villani(2009)]{villani2009optimal}
C{\'e}dric Villani.
\newblock \emph{Optimal transport: old and new}, volume 338.
\newblock Springer Science \& Business Media, Berlin, Heidelberg, 2009.
\newblock ISBN 978-3-540-71049-3.
\newblock \doi{10.1007/978-3-540-71050-9}.

\bibitem[Wu and Amini(2026{\natexlab{a}})]{WuAmini2026LocalApproximation}
Zhecheng Wu and Hamed Amini.
\newblock Local approximation of systemic risk in large sparse economic
  networks.
\newblock In \emph{2026 IEEE Conference on Computational Intelligence in
  Financial Engineering and Economics ({CIFEr})}, Tokyo, Japan,
  2026{\natexlab{a}}.
\newblock URL \url{https://ssrn.com/abstract=6759138}.

\bibitem[Wu and Amini(2026{\natexlab{b}})]{wu2026sampled}
Zhecheng Wu and Hamed Amini.
\newblock Sampled local intervention in large sparse production networks.
\newblock In \emph{2026 IEEE 65th Conference on Decision and Control ({CDC})},
  Honolulu, HI, USA, 2026{\natexlab{b}}. IEEE.
\newblock URL \url{https://ssrn.com/abstract=6570639}.

\end{thebibliography}
\bibliographystyle{plainnat}
\appendix
\section{Technical Details for Local Convergence}
\label{app:technical-details}

We collect the supporting topological and continuity arguments, followed
by type conditioning and the marked inhomogeneous random graph limit.

\subsection{Mark Spaces and the Local Metric}
\label{App:Polish}
\label{app:metric}

We use the depth-\(r\) restriction spaces \(\mathfrak X_r\), metrics
\(d_r\), and local metric \(d_{\mathrm{loc}}\) defined in
Section~\ref{sec:marked_loc_top}. Recall that \(d_r\) compares
depth-\(r\) marked neighborhoods through root-preserving directed
isomorphisms, with distance one when their underlying shapes differ.

\begin{proposition}[Outgoing local space]
\label{prop:explored-local-polish}
The space
$(\widetilde{\mathcal G}_*,d_{\mathrm{loc}})$
is Polish and has the local topology specified in
Section~\ref{sec:marked_loc_top}.
\end{proposition}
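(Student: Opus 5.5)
The plan is to follow the standard route for marked rooted graph spaces (as in \citet{Hofstad_2024} and \citet{LackerRamananWu2023}), adapted to outgoing exploration. The proof has three parts: metric properties, separability, and completeness.

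\emph{Metric and topology.} First I verify that each \(d_r\) is a metric on \(\mathfrak X_r\). Symmetry follows from inverting isomorphisms. For the triangle inequality I compose root-preserving isomorphisms and bound the maxima, using that \(d_V,d_E\le1\) are metrics. If two shapes differ, at least one distance in the triangle equals \(1\), so the inequality is trivial. Definiteness uses finiteness of \(\operatorname{Iso}(a,b)\): the minimum is attained, so \(d_r(a,b)=0\) yields a mark-preserving isomorphism. Here I use that \([\xi]_r\) is finite, since out-degrees are finite and the depth is finite. Restriction is monotone, \([[\xi]_{r}]_{s}=[\xi]_s\) for \(s\le r\), and an isomorphism at depth \(r\) restricts to one at depth \(s\). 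Together these give \(d_s\le d_r\) after restriction. Hence \(d_{\mathrm{loc}}(\xi_m,\xi)\to0\) iff \(d_r([\xi_m]_r,[\xi]_r)\to0\) for every \(r\), which is exactly the local topology described in Section~\ref{sec:marked_loc_top}. On the quotient, \(d_{\mathrm{loc}}\) is a genuine metric by construction.

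\emph{Separability.} I take countable dense subsets of \(\mathsf M^V\) and \(\mathsf M^E\). The candidate dense set consists of finite rooted directed graphs, all of whose vertices are reachable from the root, with marks drawn from these dense sets. This set is countable. Given \(\xi\) and \(\varepsilon>0\), I choose \(r\) with \(2^{-r}<\varepsilon\), take \([\xi]_r\) viewed as a rooted graph, and perturb its finitely many marks to dense values within \(\varepsilon\). The resulting \(\xi'\) has \([\xi']_s\) matching \([\xi]_s\) in shape for all \(s\le r\), and it has no edges beyond. Therefore \(d_{\mathrm{loc}}(\xi,\xi')\le\varepsilon+2^{-r}\). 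One point to check is that the truncation \(E_r^+\) removes the out-edges of depth-\(r\) vertices. Since the truncation retains those vertices with their marks, equality of shapes up to depth \(r\) still holds.

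\emph{Completeness.} This is the main obstacle. Let \((\xi_m)\) be Cauchy. Passing to a subsequence with \(d_{\mathrm{loc}}(\xi_m,\xi_{m+1})<2^{-2m-2}\), for each fixed \(r\) the shapes \([\xi_m]_r\) are eventually constant, say from index \(m_r\) onward. For \(m\ge m_r\) the depth-\(r\) distances are summable, \(d_r\le2^{r+1}d_{\mathrm{loc}}\). I then build a consistent limit object by choosing isomorphisms inductively. I fix a representative graph \(\Gamma_r\) for the common depth-\(r\) shape, with \(\Gamma_r\) extending \(\Gamma_{r-1}\), and choose near-optimal isomorphisms \(\phi_m\) identifying \([\xi_m]_r\) with \(\Gamma_r\). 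The difficulty is that optimal isomorphisms at different \(m\) or \(r\) need not be compatible, because of automorphisms of the shape. I handle this with a compactness step. Since \(\operatorname{Iso}\) is finite at each depth, a diagonal (König's lemma) argument selects isomorphisms \(\psi_{m}^{(r)}\) that are compatible across \(r\). Along these identifications, the marks at each fixed vertex of \(\Gamma=\bigcup_r\Gamma_r\) form Cauchy sequences in the complete spaces \(\mathsf M^V\) and \(\mathsf M^E\); here \(1\wedge d\) Cauchy implies \(d\) Cauchy once the distance is below \(1\). More precisely, I compare consecutive terms by transporting with \(\psi_{m+1}^{(r)}\circ(\psi_m^{(r)})^{-1}\). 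To make the summability argument work I should pick the isomorphism realizing \(d_r(\xi_m,\xi_{m+1})\) and compose successively. With that choice the composed chain gives a coherent labeling, so mark increments along it are summable. Compatibility across depths follows because restricting an optimal depth-\(r\) isomorphism gives an admissible depth-\(s\) one with no larger cost. The marks therefore converge to limits, defining \(\xi\in\mathcal G_*\). The limit graph is forward-locally-finite because each \(\Gamma_r\) is finite. Finally, \(d_r([\xi_m]_r,[\xi]_r)\to0\) for every \(r\), so \(\xi_m\to\xi\), and a Cauchy sequence with a convergent subsequence converges.
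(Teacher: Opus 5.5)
Your treatment of the metric, the local topology, and separability is sound, and proving completeness directly for Cauchy sequences is a legitimate alternative to the paper's argument. The gap is in the completeness step, exactly where you placed the difficulty. You mix two selection schemes, and as written neither yields one labeling of $\Gamma=\bigcup_r\Gamma_r$ along which the marks converge at every depth.

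\emph{The K\"onig step.} Compatibility of the $\psi_m^{(r)}$ across $r$ says nothing about marks. Suppose every $\xi_m$ equals one fixed $\xi$ whose root has two children with isomorphic subtrees but different marks. Alternating in $m$ between the identity and the swap is compatible across $r$, yet the mark at a fixed child of $\Gamma$ oscillates.

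\emph{The chain step.} The chain of isomorphisms realizing $d_r([\xi_m]_r,[\xi_{m+1}]_r)$ does give Cauchy marks, but only at one fixed depth $r$. If you run it separately for each $r$, the depth-$(r+1)$ chain restricted to depth $r$ differs from the depth-$r$ chain by automorphisms of $\Gamma_r$ that may depend on $m$. Then ``the mark at a fixed vertex of $\Gamma$'' is not well defined. Your remark that restricting an optimal depth-$r$ isomorphism is admissible at depth $s$ handles only $s\le r$ for a single $r$.

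\emph{The repair.} The missing idea is to tie the depth to $m$, which your rate $2^{-2m-2}$ already permits. Since
$d_m([\xi_m]_m,[\xi_{m+1}]_m)\le 2^{m+1}d_{\mathrm{loc}}(\xi_m,\xi_{m+1})<2^{-m-1}<1$,
choose for each consecutive pair one optimal isomorphism $\theta_m$ of the depth-$m$ restrictions. Relabel the representatives inductively so that every $\theta_m$ is the identity. Then:
\begin{itemize}
\item for fixed $r$ and $m\ge r$, all depth-$r$ restrictions share one labeled shape $\Gamma_r$, and these shapes are nested;
\item each vertex and edge mark moves by at most $2^{-m-1}$ from $m$ to $m+1$, so it is Cauchy in $d_{\mathsf M^V}$ or $d_{\mathsf M^E}$;
\item compatibility across depths is automatic, since every depth uses restrictions of the same maps, so the K\"onig step is unnecessary.
\end{itemize}
Alternatively, keep your fixed-depth chains to get limits $Y_r\in\mathfrak X_r$. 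Because restriction does not increase the distance, the triangle inequality gives $d_r(Y_r,[Y_{r+1}]_r)=0$. Then glue nested representatives by relabeling; this is exactly the surjectivity step for the paper's profile map.

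\emph{Comparison with the paper.} The paper never handles Cauchy sequences in $\widetilde{\mathcal G}_*$ directly. It shows each $(\mathfrak X_r,d_r)$ is Polish: a countable union, at mutual distance one, of quotients of complete separable mark products by finite rooted automorphism groups acting isometrically. It then identifies $\widetilde{\mathcal G}_*$ isometrically with the closed set of compatible profiles in $\prod_r\mathfrak X_r$ under $D$. That packaging absorbs the automorphism issue and gives the projective-limit representation reused in Corollary~\ref{coro:projective-convergence} and for the typed spaces in the IRG proof. Your route, once repaired, is more elementary and gives an explicit countable dense set.
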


\begin{proof}
Let $\mathscr H_r$ be the countable set of admissible unmarked
rooted depth-$r$ shapes. Then
\[
\mathfrak X_r
\cong
\bigsqcup_{H\in\mathscr H_r}
\left(
(\mathsf M^V)^{V(H)}
\times(\mathsf M^E)^{E(H)}
\right)\big/\operatorname{Aut}_o(H).
\]
Each rooted automorphism group $\operatorname{Aut}_o(H)$ is finite
and acts isometrically under the complete maximum product metric.
Hence each component is complete and separable under the quotient
metric $d_r$. Distinct components have distance one, so
$(\mathfrak X_r,d_r)$ is Polish.

For \(0\leq s\leq r\), let
\(\pi_{r,s}:\mathfrak X_r\to\mathfrak X_s\) be the outgoing
depth-\(s\) restriction. These maps satisfy
\(\pi_{r,t}=\pi_{s,t}\circ\pi_{r,s}\) for \(0\leq t\leq s\leq r\), and
\(d_s(\pi_{r,s}a,\pi_{r,s}b)\leq d_r(a,b)\).
Consequently,
\[
\mathfrak X_\infty
=
\bigcap_{0\leq s\leq r<\infty}
\left\{
a\in\prod_{k\geq0}\mathfrak X_k:
\pi_{r,s}(a_r)=a_s
\right\}
\]
is closed in the Polish product equipped with
$D(a,b):=\sum_{r\geq0}2^{-r-1}d_r(a_r,b_r)$.

Given $a\in\mathfrak X_\infty$, choose nested marked
representatives $H_r\in a_r$, relabeled so that
$H_{r+1}|_r=H_r$, where $|_r$ denotes outgoing restriction.
Set $G:=\bigcup_{r\geq0}H_r$. Compatibility gives
$\deg_G^+(v)=\deg_{H_{j+1}}^+(v)<\infty,
\quad j=d_G^+(o,v)$, and $[G,o,\mathcal M(G)]_r=a_r$.
Thus the profile map
$\Phi:\widetilde{\mathcal G}_*\to\mathfrak X_\infty$,
$\Phi(\xi):=([\xi]_r)_{r\geq0}$, is surjective.
By the zero-distance quotient,
$\Phi(\xi)=\Phi(\eta)\iff\xi=\eta$, and
\[
D\bigl(\Phi(\xi),\Phi(\eta)\bigr)
=
\sum_{r\geq0}2^{-r-1}d_r([\xi]_r,[\eta]_r)
=
d_{\mathrm{loc}}(\xi,\eta).
\]
Hence $\Phi$ is an isometric bijection onto a Polish space.
Finally,
$d_{\mathrm{loc}}(\xi_n,\xi)\to0$
if and only if
$d_r([\xi_n]_r,[\xi]_r)\to0$ for every $r\geq0$,
which, by the definition of $d_r$, is precisely the stated
local topology.
\end{proof}

This representation also allows convergence of full rooted laws
to be checked through their finite restrictions.

\begin{corollary}[Convergence through finite restrictions]
\label{coro:projective-convergence}
Fix an integer \(m\geq1\). Set
\(E:=(\widetilde{\mathcal G}_*)^m\) and
\(E_r:=(\mathfrak X_r)^m\), and define \(p_r:E\to E_r\) by
\(p_r(\xi_1,\ldots,\xi_m):=([\xi_1]_r,\ldots,[\xi_m]_r)\).
For Borel probability laws \(Q_n,Q\in\mathscr P(E)\),
\[
Q_n\Rightarrow Q
\quad\Longleftrightarrow\quad
(p_r)_\#Q_n\Rightarrow(p_r)_\#Q
\quad\text{for every }r\geq0,
\]
where \((p_r)_\#Q:=Q\circ p_r^{-1}\) denotes the pushforward.
The same equivalence holds for weak convergence in probability
when \(Q_n\) are random laws and \(Q\) is deterministic.
\end{corollary}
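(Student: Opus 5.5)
The forward implication follows from continuity of the projections. For each \(r\), the map \(\xi\mapsto[\xi]_r\) is \(2^{r+1}\)-Lipschitz from \((\widetilde{\mathcal G}_*,d_{\mathrm{loc}})\) to \((\mathfrak X_r,d_r)\), because \(d_{\mathrm{loc}}\geq2^{-r-1}d_r\). Hence \(p_r\) is continuous on \(E\) with the product (maximum) metric. The continuous mapping theorem then gives \((p_r)_\#Q_n\Rightarrow(p_r)_\#Q\).

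For the converse, I would use the isometric embedding \(\Phi\) of Proposition~\ref{prop:explored-local-polish} onto the closed set \(\mathfrak X_\infty\subseteq\prod_k\mathfrak X_k\). The plan is to show that the class of functions \(h\circ p_r\), with \(r\geq0\) and \(h\) bounded Lipschitz on \(E_r\), is rich enough to test weak convergence on \(E\). Take \(g\) bounded Lipschitz on \(E\) with constant \(\mathrm{Lip}(g)\). For each tuple \(a=(a_1,\dots,a_m)\in p_r(E)\), fix a measurable selection \(s_r(a)\in E\) with \(p_r(s_r(a))=a\), and set \(g_r:=g\circ s_r\circ p_r\). If \(p_r(\xi)=p_r(\xi')\), then \([\xi_j]_t=[\xi'_j]_t\) for every \(t\leq r\) by projective compatibility, so
\[
d_{\mathrm{loc}}(\xi_j,\xi'_j)\leq\sum_{t>r}2^{-t-1}=2^{-r-1}.
\]
It follows that \(\|g-g_r\|_\infty\leq\mathrm{Lip}(g)2^{-r-1}\).

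Measurability and continuity of the selection can be avoided by a cleaner choice. Define
\[
\tilde g_r(\xi):=\inf\{g(\xi'):p_r(\xi')=p_r(\xi)\},
\]
which depends only on \(p_r(\xi)\) and satisfies the same uniform bound. Because \(g\) is Lipschitz and \(d_r\) controls the depth-\(r\) shapes, the induced function on \(p_r(E)\) is Lipschitz with respect to the metric on \(E_r\). Indeed, if two profiles are \(d_r\)-close with the same shape, an isomorphism transports marks at depths \(\leq r\) while deeper structure can be copied. This gives a preimage at \(d_{\mathrm{loc}}\)-distance at most \(\sum_{t\leq r}2^{-t-1}d_t+2^{-r-1}\cdot0\), up to an additive \(2^{-r}\) slack. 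The slack only costs \(O(2^{-r})\) in the uniform approximation, so it suffices to show that \(\tilde g_r\) is within \(O(2^{-r})\) of a function \(h_r\circ p_r\) with \(h_r\) continuous and bounded on \(E_r\). This copying construction is the step I expect to be the main obstacle; the alternative route is Portmanteau through finite-dimensional cylinder sets, which form a \(\pi\)-system generating the Borel \(\sigma\)-algebra and whose boundaries are controlled by those of \(p_r\)-sets.

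With this approximation in hand, the estimate
\[
|\langle Q_n,g\rangle-\langle Q,g\rangle|
\leq2\|g-h_r\circ p_r\|_\infty+|\langle(p_r)_\#Q_n,h_r\rangle-\langle(p_r)_\#Q,h_r\rangle|
\]
gives \(\langle Q_n,g\rangle\to\langle Q,g\rangle\) for all bounded Lipschitz \(g\), which suffices for weak convergence. The same inequality applies verbatim to random \(Q_n\), giving convergence in probability of each integral. To pass to the measure-valued statement, use a countable convergence-determining bounded-Lipschitz class together with the subsequence criterion, as in Corollary~\ref{coro:joint-type-output-convergence}. Conversely, weak convergence in probability pushes forward under the continuous \(p_r\) by the same subsequence argument.
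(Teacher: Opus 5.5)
Your argument is correct, and it takes a different route from the paper's. The paper proves the converse by compactness. Weak convergence of each projection makes $((p_r)_\#Q_n)_n$ tight. Because the compatible-profile subspace is closed in $\prod_r E_r$, the set $E\cap\prod_r K_r$ is compact, so $(Q_n)$ is tight by Prokhorov. Every subsequential limit is then identified with $Q$ because $\bigcup_r\sigma(p_r)$ is a generating $\pi$-system, and the random case uses a Borel--Cantelli diagonal subsequence over all $r$ at once.

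You instead show that cylinder functions $h_r\circ p_r$ approximate bounded Lipschitz test functions uniformly, with $\|h_r\|_\infty\le\|g\|_\infty$, $\operatorname{Lip}(h_r)\le\operatorname{Lip}(g)$ and $\|g-h_r\circ p_r\|_\infty\le\operatorname{Lip}(g)2^{-r-1}$. This avoids Prokhorov and limit identification, and it is quantitative. For the bounded-Lipschitz metrics it gives $\beta_E(Q_n,Q)\le2^{-r}+\beta_{E_r}((p_r)_\#Q_n,(p_r)_\#Q)$ for all $r$ and $n$, which makes the random-law case nearly immediate. The cost is that you rely on a lifting property specific to $d_{\mathrm{loc}}$: marks on a finite restriction can be changed without touching the deeper structure. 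The paper needs only that $E$ is a closed subspace of $\prod_r E_r$.

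As written, your flagged step is left unfinished: you reduce to finding a continuous $h_r$ but do not produce one. It closes with no slack, as follows.

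Take $\xi_j$ with $[\xi_j]_r=a_j$ and $b_j$ of the same shape, and let $\phi$ attain $d_r(a_j,b_j)$. Transport the marks of $b_j$ onto $V_r^+(o)$ and $E_r^+(o)$ of a representative of $\xi_j$, leaving everything else unchanged. The identity is then a shape isomorphism at every depth, so $d_t\le d_r(a_j,b_j)$ for all $t$. The terms with $t>r$ do not vanish, contrary to your ``$2^{-r-1}\cdot 0$'', but they satisfy the same bound. Since $\sum_t2^{-t-1}=1$, this gives $d_{\mathrm{loc}}(\xi_j,\xi_j')\le d_r(a_j,b_j)$. If the shapes differ, $d_r=1\ge d_{\mathrm{loc}}$ trivially.

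Hence $\hat h_r(a):=\inf\{g(\xi):p_r(\xi)=a\}$ is itself $\operatorname{Lip}(g)$-Lipschitz on $E_r=p_r(E)$, and you can take $h_r:=\hat h_r$. If you prefer to keep your slack, the inf-convolution $a\mapsto\inf_b\{\hat h_r(b)+\operatorname{Lip}(g)\,d_{E_r}(a,b)\}$ removes it at cost $O(2^{-r})$.

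Your cylinder-set fallback would need more than a generating $\pi$-system. You would need, around each point, cylinder neighbourhoods inside small balls whose boundaries are $Q$-null. The bound $d_{\mathrm{loc}}\le d_r+2^{-r-1}$ supplies these, but the Lipschitz route is cleaner.
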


\begin{proof}
The forward implications follow from continuity of $p_r$.
For the converse, set $Q_{n,r}:=(p_r)_\#Q_n$ and
$Q_r:=(p_r)_\#Q$. By
Proposition~\ref{prop:explored-local-polish}, identify $E$
with its closed compatible-profile subspace of
$\prod_{r\geq0}E_r$.
For $\varepsilon>0$, projected tightness gives compact
$K_r\subseteq E_r$ with
$\sup_n Q_{n,r}(K_r^c)\leq\varepsilon2^{-r-1}$.
Then $K:=E\cap\prod_{r\geq0}K_r$ is compact, and
$\sup_n Q_n(E\setminus K)
\leq
\sum_{r\geq0}\sup_n Q_{n,r}(K_r^c)
\leq\varepsilon.$
Thus $(Q_n)$ is relatively weakly compact by Prokhorov's theorem.
Any subsequential limit $\widehat Q$ satisfies
$(p_r)_\#\widehat Q=Q_r=(p_r)_\#Q, $ for all 
$ r\geq0.$
Since $p_s=\pi_{r,s}^{\times m}\circ p_r$ for $s\leq r$,
$\bigcup_{r\geq0}\sigma(p_r)$ is a $\pi$-system generating
$\mathcal B(E)$. Hence $\widehat Q=Q$, and $Q_n\Rightarrow Q$.

For random laws, let $\beta_r$ metrize weak convergence on
$\mathscr P(E_r)$. From any subsequence $(Q_{n_j})$, choose
a further subsequence $(Q_{n_{j_k}})$ such that
\[
\mathbb P\!\left(
\max_{0\leq r\leq k}
\beta_r(Q_{n_{j_k},r},Q_r)>2^{-k}
\right)\leq2^{-k}.
\]
Borel--Cantelli gives
$Q_{n_{j_k},r}\Rightarrow Q_r$ simultaneously for all $r$,
a.s. Applying the deterministic assertion pathwise
yields $Q_{n_{j_k}}\Rightarrow Q$ a.s.
The subsequence criterion therefore gives
$Q_n\overset{\mathbb P}{\Rightarrow}Q$.
\end{proof}

\subsection{Continuity of Finite-Depth Iterates}

We next verify the continuity needed to apply local convergence to
the contractive iterates and the two monotone bounds.

\begin{lemma}[Fixed-depth continuity]
\label{lem:fixed-depth-continuity}
For every \(k\geq0\), the root functional
\(\Phi_k(\xi):=x_o^{k}\), obtained from the local recursion
initialized at zero, is continuous on the contractive
marked-network space.
In the nonexpansive regime, the same holds for
\(\Phi_k^\pm(\xi):=x_o^{\pm,k}\), with initializations
\(x_v^{-,0}=0\) and \(x_v^{+,0}=\mathfrak b(f_v)\).
\end{lemma}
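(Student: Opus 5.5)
The argument rests on a depth-localization observation followed by a finite-dimensional continuity statement. First I show that \(x_o^{k}\) and \(x_o^{\pm,k}\) depend only on the depth-\(k\) restriction \([\xi]_k\). By induction on \(j\), for every vertex \(v\) with \(d_G^+(o,v)\leq k-j\), the value \(x_v^{j}\) is determined by the marks on \(V_k^+(o)\) and the edges in \(E_k^+(o)\).

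The base case \(j=0\) is immediate: \(x_v^{0}=0\) in the contractive case, and \(x_v^{\pm,0}\in\{0,\mathfrak b(f_v)\}\) depends only on \(f_v\). For the inductive step, fix \(v\) with \(d_G^+(o,v)\leq k-j-1<k\). Then every outgoing edge \((v,u)\) belongs to \(E_k^+(o)\), and each such \(u\) satisfies \(d_G^+(o,u)\leq k-j\). Hence the full outgoing star of \(v\), its exposures \(C_{vu}\), and therefore the normalized weights \(W_{vu}=C_{vu}/\sum_w C_{vw}\) are all visible, and the update uses only values covered by the induction hypothesis. Consequently \(\Phi_k=F_k\circ[\cdot]_k\) and \(\Phi_k^\pm=F_k^\pm\circ[\cdot]_k\) for maps \(F_k,F_k^\pm\) on \(\mathfrak X_k\).

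These maps are well defined on isomorphism classes. A marked root-preserving isomorphism transports vertex marks, edge marks, weights, and hence the whole recursion, so the root value is unchanged. Because \(d_k([\xi]_k,[\xi']_k)\leq 2^{k+1}d_{\mathrm{loc}}(\xi,\xi')\), the map \(\xi\mapsto[\xi]_k\) is continuous, and it therefore suffices to prove that \(F_k\) and \(F_k^\pm\) are continuous on \(\mathfrak X_k\).

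Now take \(a_m\to a\) in \((\mathfrak X_k,d_k)\). By definition of \(d_k\), eventually the underlying shapes coincide, and there are isomorphisms \(\phi_m\) under which all marks converge. Since \(F_k\) is isomorphism invariant, I can transport \(a_m\) onto the fixed finite shape \(H\) of \(a\). The recursion is then a fixed finite composition of elementary maps, and continuity follows from checking each one.
\begin{enumerate}
\item \emph{Normalization.} \(d_{\mathsf M^E}\)-convergence of positive exposures is equivalent to convergence in \((0,\infty)\). The denominator \(\sum_w C_{vw}\) is a finite sum of positive numbers, so the weights \(W_{vu}\) converge. Leaves carry no weights and receive zero network input.
\item \emph{Shocks.} The shocks \(\epsilon_v\) converge in \(\mathbb R\).
\item \emph{Responses.} The key fact is joint continuity of evaluation \((f,z)\mapsto f(z)\), which splits as
\[
|f_m(z_m)-f(z)|\leq|f_m(z_m)-f(z_m)|+|f(z_m)-f(z)|.
\]
In the nonexpansive space, the first term is at most \(\|f_m-f\|_\infty\). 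In \(\mathsf F_{\mathrm c}\), \(d_{\mathrm c}\)-convergence gives uniform convergence on \(\{|z|\leq q\}\) for every \(q\), and the convergent sequence \(z_m\) stays in some such compact set. The second term vanishes by continuity of \(f\).
\item \emph{Upper initialization.} For \(\Phi_k^+\), continuity of \(x_v^{+,0}=\mathfrak b(f_v)\) follows from \(|\mathfrak b(f)-\mathfrak b(g)|\leq\|f-g\|_\infty\).
\end{enumerate}
Induction over the \(k\) recursion steps, with finite sums and products of convergent real sequences at each step, then gives \(F_k(a_m)\to F_k(a)\), and likewise for \(F_k^\pm\).

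I expect the main obstacle to be the localization step rather than the continuity step. It must confirm that the depth-\(k\) restriction contains complete outgoing stars at all depths below \(k\), which holds because \(E_k^+(o)\) includes every edge leaving a vertex at distance less than \(k\). Without this, the normalization denominators would be invisible and continuity would fail. A further point of care is that the pseudometric quotient only identifies networks with equal forward profiles, so \(\Phi_k\) is well defined on \(\widetilde{\mathcal G}_*\) precisely because it factors through \([\cdot]_k\).
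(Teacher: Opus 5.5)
Your proof is correct and follows essentially the same route as the paper's. Both arguments pass to the eventually common depth-$k$ shape via optimal isomorphisms, use the complete outgoing stars below depth $k$ to get convergence of the normalized weights, and induct over the $k$ recursion steps on finite vertex sets. The differences are cosmetic: you make the factorization through $[\cdot]_k$ explicit and handle the response step by joint continuity of evaluation, whereas the paper bounds input perturbations with the uniform Lipschitz constant ($L$ or $1$) and uses convergence of the responses only at the fixed limiting input.
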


\begin{proof}
For $k=0$, the assertion follows from $\Phi_0=\Phi_0^-=0$ and
$|\mathfrak b(f)-\mathfrak b(g)|\leq\|f-g\|_\infty$.
Fix $k\geq1$ and let $\xi_n\to\xi$. Since
$d_k([\xi_n]_k,[\xi]_k)\leq
2^{k+1}d_{\mathrm{loc}}(\xi_n,\xi)\to0$,
the depth-$k$ shapes eventually agree. Choose root-preserving
shape isomorphisms $\phi_n:[\xi]_k\to[\xi_n]_k$ attaining the
minimum in $d_k$, and pull back all marks and iterates to the common
shape. The response at the corresponding vertex is denoted by
$f_{\phi_n(v)}$.
Writing $m_v(n)=(\tau_v(n),\epsilon_v(n),f_{\phi_n(v)})$, we have
\[
\max_{v\in V_k^+(o)}d_V(m_v(n),m_v)
\vee
\max_{(v,u)\in E_k^+(o)}d_E(C_{vu}(n),C_{vu})
=
d_k([\xi_n]_k,[\xi]_k)\longrightarrow0.
\]
In particular, $\epsilon_v(n)\to\epsilon_v$ and
$C_{vu}(n)\to C_{vu}$.

For $v\in V_{k-1}^+(o)$, the complete outgoing star is revealed.
If it is nonempty, positivity and finiteness give
$S_v(n):=\sum_{u:v\to u}C_{vu}(n)
\to
S_v:=\sum_{u:v\to u}C_{vu}>0,
$ and $
W_{vu}(n)=\frac{C_{vu}(n)}{S_v(n)}
\to W_{vu}.$
Empty stars give zero network input in both graphs.

\smallskip
\noindent\emph{Case 1: Contractive regime.}
Here $f_{\phi_n(v)}\to f_v$ locally uniformly and
$\operatorname{Lip}(f_{\phi_n(v)})\leq L<1$.
For $0\leq\ell\leq k$, set
$\Delta_{\ell,n}
:=
\max_{v\in V_{k-\ell}^+(o)}
|x_v^{\ell}(n)-x_v^{\ell}|,
$ with $
\Delta_{0,n}=0.$
Suppose $\Delta_{\ell,n}\to0$ for some $\ell<k$.
For $v\in V_{k-\ell-1}^+(o)$, all outgoing neighbors belong to
$V_{k-\ell}^+(o)$. With
$z_v^{\ell}:=\sum_{u:v\to u}W_{vu}x_u^{\ell}+\epsilon_v$,
the Lipschitz and row-sum bounds yield
\[
\begin{aligned}
|x_v^{\ell+1}(n)-x_v^{\ell+1}|
\leq
L\Delta_{\ell,n}+L|\epsilon_v(n)-\epsilon_v|+L\sum_{u:v\to u}|W_{vu}(n)-W_{vu}|\,|x_u^{\ell}|
+|f_{\phi_n(v)}(z_v^{\ell})-f_v(z_v^{\ell})|.
\end{aligned}
\]
The last term vanishes by locally uniform convergence at the
fixed input $z_v^{\ell}$. Since $V_{k-\ell-1}^+(o)$ is finite,
$\Delta_{\ell+1,n}\to0$. Induction gives
$|\Phi_k(\xi_n)-\Phi_k(\xi)|
=\Delta_{k,n}\longrightarrow0.$

\smallskip
\noindent\emph{Case 2: Bounded nonexpansive regime.}
Here $\operatorname{Lip}(f_{\phi_n(v)})\leq1$ and
$a_n:=\max_{v\in V_k^+(o)}
\|f_{\phi_n(v)}-f_v\|_\infty\to 0.$
For $\sigma\in\{-,+\}$ and $0\leq\ell\leq k$, set
$
\Delta_{\ell,n}^{\sigma}
:=
\max_{v\in V_{k-\ell}^+(o)}
|x_v^{\sigma,\ell}(n)-x_v^{\sigma,\ell}|.
$
The initializations satisfy
$\Delta_{0,n}^-=0,$ and
$\Delta_{0,n}^+
=
\max_{v\in V_k^+(o)}
|\mathfrak b(f_{\phi_n(v)})-\mathfrak b(f_v)|
\leq a_n\to 0.$
Suppose $\Delta_{\ell,n}^{\sigma}\to0$ for some $\ell<k$.
For $v\in V_{k-\ell-1}^+(o)$, the same estimate with
Lipschitz constant one gives
\[
\begin{aligned}
|x_v^{\sigma,\ell+1}(n)-x_v^{\sigma,\ell+1}|
&\leq
\Delta_{\ell,n}^{\sigma}
+|\epsilon_v(n)-\epsilon_v|+
\sum_{u:v\to u}|W_{vu}(n)-W_{vu}|\,
|x_u^{\sigma,\ell}|+a_n
\longrightarrow0.
\end{aligned}
\]
Finiteness of $V_{k-\ell-1}^+(o)$ yields
$\Delta_{\ell+1,n}^{\sigma}\to0$. Hence, by induction,
$|\Phi_k^\sigma(\xi_n)-\Phi_k^\sigma(\xi)|
=\Delta_{k,n}^{\sigma}\to 0,$ which proves continuity.
\end{proof}

\subsection{Type-Conditioned Rooted Laws}
\label{app:type-conditioned-laws}

Conditioning on a type set preserves local convergence when the
set has positive limiting mass and a \(\mu\)-null boundary.
For a Borel set \(A\subseteq\mathcal T\) with \(\mu(A)>0\),
define
\begin{equation}
\label{eq:type-set-conditional-limit-law}
\mathcal P^A
:=
\frac{\mathcal P(\,\cdot\cap\vartheta^{-1}(A))}{\mu(A)},
\qquad
\mathcal P_n^A
:=
\frac{\mathcal P_n(\,\cdot\cap\vartheta^{-1}(A))}{\mu_n(A)}.
\end{equation}
The second expression applies when \(\mu_n(A)>0\);
set \(\mathcal P_n^A:=\mathcal P^A\) otherwise.

\begin{proposition}[Type-conditioned convergence]
\label{prop:type-set-mixture}
If
\(\mathcal P_n\overset{\mathbb P}{\Rightarrow}\mathcal P\),
\(\mu(A)>0\), and \(\mu(\partial A)=0\), then
\[
\mu_n(A)\xrightarrow{\mathbb P}\mu(A),
\qquad
\mathcal P_n^A
\overset{\mathbb P}{\Rightarrow}\mathcal P^A.
\]
\end{proposition}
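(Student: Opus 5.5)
The plan is to treat the indicator of the type set as a test function that is continuous off a null set. First, \(\mu_n(A)=\langle\mathcal P_n,\mathbf 1_{\vartheta^{-1}(A)}\rangle\). The root-type projection \(\vartheta\) is continuous, so \(\partial\vartheta^{-1}(A)\subseteq\vartheta^{-1}(\partial A)\), and this set is \(\mathcal P\)-null because \(\mu(\partial A)=0\). I would avoid applying portmanteau to random laws directly and use the subsequence criterion instead. By Corollary~\ref{coro:projective-convergence}, or a countable convergence-determining class of bounded Lipschitz functions on the Polish space \(\widetilde{\mathcal G}_*\), any subsequence has a further subsequence along which \(\mathcal P_n\Rightarrow\mathcal P\) almost surely. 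Pathwise portmanteau along it gives \(\mu_n(A)\to\mu(A)\) a.s., and the subsequence criterion then yields \(\mu_n(A)\xrightarrow{\mathbb P}\mu(A)\).

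For the conditional laws, fix \(h\in C_b(\widetilde{\mathcal G}_*)\). On the event \(\{\mu_n(A)>0\}\) we have
\[
\langle\mathcal P_n^A,h\rangle=\frac{\langle\mathcal P_n,h\mathbf 1_{\vartheta^{-1}(A)}\rangle}{\mu_n(A)}.
\]
The function \(h\mathbf 1_{\vartheta^{-1}(A)}\) is bounded and continuous off \(\vartheta^{-1}(\partial A)\), which is \(\mathcal P\)-null. The mapping theorem, in the form that \(\int g\,d\nu_m\to\int g\,d\nu\) for bounded \(g\) whose discontinuity set is \(\nu\)-null, applied pathwise along the almost-surely convergent subsequence, gives convergence of the numerator to \(\langle\mathcal P,h\mathbf 1_{\vartheta^{-1}(A)}\rangle\). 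The denominator converges to \(\mu(A)>0\), so the event \(\{\mu_n(A)=0\}\) eventually fails along the subsequence. On that event the definition sets \(\mathcal P_n^A=\mathcal P^A\) anyway, so it causes no trouble. The ratio therefore converges a.s. to \(\langle\mathcal P^A,h\rangle\).

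The main obstacle is that this argument is for one test function at a time, while the conclusion is weak convergence in probability of random measures. I would handle this the same way as in Corollary~\ref{coro:joint-type-output-convergence}. Extract, by a diagonal argument, a single subsequence along which \(\mathcal P_n\Rightarrow\mathcal P\) almost surely as measures. This is possible because \(\widetilde{\mathcal G}_*\) is Polish, so weak convergence is metrizable and a countable convergence-determining class exists. On that probability-one event, the pathwise argument above holds simultaneously for every \(h\in C_b\), so \(\mathcal P_n^A\Rightarrow\mathcal P^A\) pathwise. Applying the subsequence criterion once more gives \(\mathcal P_n^A\overset{\mathbb P}{\Rightarrow}\mathcal P^A\) in the sense of Definition~\ref{def:lwc}.
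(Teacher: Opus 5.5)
Your proposal is correct and follows essentially the same route as the paper. Continuity of \(\vartheta\) gives \(\partial\vartheta^{-1}(A)\subseteq\vartheta^{-1}(\partial A)\), so the test functions \(h\mathbf 1_{\vartheta^{-1}(A)}\) are bounded and \(\mathcal P\)-a.e.\ continuous. Portmanteau is then applied along almost surely convergent subsequences, the choice \(h\equiv1\) handles the denominator, and one divides.

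Your explicit extraction of a single subsequence with almost sure weak convergence of \(\mathcal P_n\) only makes precise what the paper's ``applied along a.s.\ convergent subsequences'' leaves implicit. Since Definition~\ref{def:lwc} is stated one test function at a time, your final step that is simultaneous in \(h\) is more than needed, but harmless.
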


\begin{proof}
Continuity of \(\vartheta\) gives
\(\partial(\vartheta^{-1}(A))
\subseteq\vartheta^{-1}(\partial A)\), a
\(\mathcal P\)-null set. Thus, for every
\(h\in C_b(\widetilde{\mathcal G}_*)\),
\(h\mathbf1_{\vartheta^{-1}(A)}\) is bounded and
\(\mathcal P\)-a.e. continuous.
The portmanteau theorem, applied along a.s. convergent
subsequences, yields
\[
\int h\,\mathbf1_{\vartheta^{-1}(A)}\,d\mathcal P_n
\xrightarrow{\mathbb P}
\int h\,\mathbf1_{\vartheta^{-1}(A)}\,d\mathcal P.
\]
Taking \(h\equiv1\) gives
\(\mu_n(A)\xrightarrow{\mathbb P}\mu(A)>0\), so
\(\mathbb P(\mu_n(A)=0)\to0\).
Dividing by the convergent denominators gives
\(\langle\mathcal P_n^A,h\rangle
\xrightarrow{\mathbb P}\langle\mathcal P^A,h\rangle\),
proving the result.
\end{proof}

For finite discrete types, taking \(A=\{t\}\) gives the
type-specific laws \(\mathcal P_n^{(t)}\) and
\(\mathcal P^{(t)}\) whenever \(\mu(\{t\})>0\).

\subsection{Local Convergence of Directed Marked IRGs}
\label{sec:lwc-irg-proof}

The proof of Theorem~\ref{thm:directed-marked-irg-lwc} proceeds from a single exploration to two asymptotically
independent explorations. The latter step upgrades convergence of a
sampled neighborhood to convergence of the empirical rooted law.

\begin{proof}[Proof of Theorem~\ref{thm:directed-marked-irg-lwc}]
Fix an integer \(r\geq0\), and write
\(q_r:\widetilde{\mathcal G}_*\to\mathfrak X_r\),
\(q_r(\xi):=[\xi]_r\), for the restriction map.
We proceed in four steps, first establishing typed local convergence
and then attaching the remaining marks.

\noindent{\bf Step 1: Exploration and the typed offspring law.}
Let \(\mathfrak X_r^{\mathrm{ty}}\) be the depth-\(r\) outgoing
restriction space retaining only the rooted directed graph and its
vertex types. Let \(\mathsf T^{\mathrm{ty}}\) be the typed projection
of the branching process in
Definition~\ref{def:directed-marked-kernel-bp}.
A type-\(s\) parent has offspring-type point measure
\(\Pi_s\sim\operatorname{PRM}(\Lambda_s)\), where
\(\Lambda_s(dt):=\kappa(s,t)\mu(dt)\).
The finite-shape argument in
Proposition~\ref{prop:explored-local-polish}, with vertex-mark space
\(\mathcal T\) and a singleton edge-mark space, shows that
\(\mathfrak X_r^{\mathrm{ty}}\) is Polish.


Choose a root label \(I_n\) uniformly from \([n]\), independently of
the graph. Explore breadth first, exposing complete outgoing rows.
Initially, only the root is discovered. At stage \(h=1,\ldots,r\),
process the vertices first discovered at depth \(h-1\), in a fixed
order. When processing vertex \(v\), reveal all indicators
\(A_{vj}(n)\), \(j\ne v\), record every present arc, and declare
its previously undiscovered endpoints to be depth-\(h\) vertices.
Update the discovered-label set after each row. Each row is processed
once, and no outgoing row of a depth-\(r\) vertex is exposed.
The terminal revealed graph therefore has precisely the vertex and
edge sets
\[
V_r^+(I_n)=\{v:d_{G_n}^+(I_n,v)\leq r\},
\qquad
E_r^+(I_n)=\{(v,u)\in E_n:d_{G_n}^+(I_n,v)<r\},
\]
as required by Definition~\ref{def:U_neighborhood}.
For \(r=0\), only the root and its type are retained.

A label is called used once it has been discovered. A collision
occurs when a present arc points to a label already used before
its row is processed, other than the tail itself. This includes
back edges, cross edges, and two parents discovering the same child:
the latter is detected when the second parent's row is processed.
Since \(A_{vv}(n)=0\), self-loops need not be considered.
Without collisions, the restriction is the rooted discovery tree.

For an integer \(K\geq1\), stop at a collision or as soon as more
than \(K\) labels would be discovered, sending either event to an
isolated absorbing state \(\dagger\).
At the end of stage \(h\), a nonabsorbed state is a rooted typed tree
of depth at most \(h\), with at most \(K\) vertices; its depth-\(h\)
vertices form the next frontier. These states, with \(\dagger\)
adjoined, form a Polish space by the same finite-shape argument.
The finite labels are retained in the conditioning history, but not
in the unlabelled state. Apply the same size cutoff to the branching
process, where collisions do not occur.

Let \(\mathcal H_n\) be the exploration history immediately before
an unprocessed row is exposed, including the type array and root
label. On each compatible history, the parent label \(i\) and used
set \(U\ni i\) are fixed. All off-diagonal indicators in row \(i\)
are still unexamined and, conditionally on \(\mathcal H_n\), are
independent Bernoulli variables with their original parameters.
Define the new-child type measure
\[
\Xi_{n,i}^{U}:=\sum_{j\notin U}A_{ij}(n)\delta_{\tau_j(n)}.
\]
Consider any sequence of such histories with parent labels \(i_n\),
\(s_n:=\tau_{i_n}(n)\to s\), and \(|U_n|\leq K\).
We show that the corresponding conditional laws of
\(\Xi_{n,i_n}^{U_n}\) converge to
\(\operatorname{Law}(\Pi_s)\).

For nonnegative \(g\in C_b(\mathcal T)\), set
\(a_{n,j}:=n^{-1}\kappa(s_n,\tau_j(n))
(1-e^{-g(\tau_j(n))})\).
For sufficiently large \(n\), the truncation in \(p_{ij}(n)\)
is inactive and \(0\leq a_{n,j}\leq\bar\kappa/n\leq1/2\).
Conditional independence gives
\[
\begin{aligned}
\log\mathbb E\bigl[
 e^{-\langle\Xi_{n,i_n}^{U_n},g\rangle}
 \mid\mathcal H_n\bigr]
&=\sum_{j\notin U_n}\log(1-a_{n,j})\\
&=-\sum_{j=1}^n a_{n,j}
 +\sum_{j\in U_n}a_{n,j}
 +\sum_{j\notin U_n}\bigl(\log(1-a_{n,j})+a_{n,j}\bigr)\\
&=-\int_{\mathcal T}\kappa(s_n,t)(1-e^{-g(t)})\mu_n(dt)
 +\mathcal R_n(U_n),
\end{aligned}
\]
where
\[
\mathcal R_n(U_n)
:=\sum_{j\in U_n}a_{n,j}
 +\sum_{j\notin U_n}\bigl(\log(1-a_{n,j})+a_{n,j}\bigr).
\]
The omitted-label term is at most \(K\bar\kappa/n\).
Using \(|\log(1-x)+x|\leq2x^2\) for \(0\leq x\leq1/2\),
the remaining term has absolute value at most
\[
2\sum_{j\notin U_n}a_{n,j}^2
\leq2n(\bar\kappa/n)^2
=\frac{2\bar\kappa^2}{n}.
\]
Consequently,
\[
\sup_{\substack{U_n\ni i_n\\|U_n|\leq K}}
|\mathcal R_n(U_n)|
\leq\frac{K\bar\kappa+2\bar\kappa^2}{n}\longrightarrow0.
\]
This estimate is uniform over all compatible histories and parent
labels satisfying the stated conditions.

Joint continuity of \(\kappa\) implies that
\(\kappa(s_n,\cdot)(1-e^{-g})\) converges uniformly on compact
subsets of \(\mathcal T\) to \(\kappa(s,\cdot)(1-e^{-g})\).
These functions are uniformly bounded. Since
\(\mu_n\Rightarrow\mu\), the measures \(\mu_n\) are uniformly
tight; splitting the integrals over a compact set and its complement
therefore gives
\[
\int\kappa(s_n,t)(1-e^{-g(t)})\mu_n(dt)
\longrightarrow
\int\kappa(s,t)(1-e^{-g(t)})\mu(dt).
\]
The limiting Laplace functional is that of \(\Pi_s\).

For completeness, this convergence holds in the weak topology on
the space \(\mathcal N_f(\mathcal T)\) of finite counting measures,
not merely in a topology based on compactly supported tests.
Indeed, for every compact \(C\subseteq\mathcal T\),
\[
\mathbb E[\Xi_{n,i_n}^{U_n}(\mathcal T)\mid\mathcal H_n]
\leq\bar\kappa,
\qquad
\mathbb E[\Xi_{n,i_n}^{U_n}(C^c)\mid\mathcal H_n]
\leq\bar\kappa\mu_n(C^c).
\]
Uniform tightness of \(\mu_n\) and Markov's inequality allow us
to confine, with arbitrarily high probability, all atoms to a
compact set and their total number to a fixed finite bound.
The corresponding set of counting measures is compact: it is a
finite union of continuous images of finite products of that compact
set. Thus the conditional laws are tight in
\(\mathcal N_f(\mathcal T)\). Laplace functionals on
\(C_b^+(\mathcal T)\) determine finite random-measure laws
\citep{kallenberg2017random}, so every subsequential limit has the
required Poisson law. This proves the asserted conditional convergence.
The same Laplace-functional and tightness argument shows that
\(
s\longmapsto\operatorname{Law}(\Pi_s)
\)
is a Feller kernel.

\noindent{\bf Step 2: Collision control and typed empirical convergence.}
Let \(\mathcal C_{n,r}^{(1)}\) be the event of a collision in the
unrestricted depth-\(r\) exploration from \(I_n\), and set
\(S_{n,r}^{(1)}:=|V_r^+(I_n)|\).
The numbers of rows processed and of labels used before any row
exposure are both bounded by \(S_{n,r}^{(1)}\).

Choose an integer \(n_0\geq1\) such that
\(\bar\kappa/n\leq1/2\) for \(n\geq n_0\).
Given the exploration history, the number of new labels in a row
is a sum of independent Bernoulli variables with parameters
\(p_j\leq\bar\kappa/n\).
Realize each as \(\mathbf1_{\{N_j\geq1\}}\), where the \(N_j\)
are independent Poisson variables with means \(-\log(1-p_j)\).
Then
\[
\sum_j\mathbf1_{\{N_j\geq1\}}\leq\sum_jN_j,
\qquad
\sum_j-\log(1-p_j)\leq2\sum_jp_j\leq2\bar\kappa.
\]
A sequential coupling with fresh dominating offspring variables
therefore bounds the discovered population by that of a
Galton--Watson tree with \(\operatorname{Poisson}(2\bar\kappa)\)
offspring. Write \(\overline Z_0=1\) for its initial generation,
\(\overline Z_\ell\) for subsequent generation sizes, and
\(\overline S_r:=\sum_{\ell=0}^r\overline Z_\ell\).
For \(n\geq n_0\), the coupling gives
\(S_{n,r}^{(1)}\leq\overline S_r\) a.s. Moreover,
\[
\mathbb E[\overline Z_{\ell+1}^2]
=2\bar\kappa\,\mathbb E[\overline Z_\ell]
 +(2\bar\kappa)^2\mathbb E[\overline Z_\ell^2],
\]
so \(\mathbb E[\overline S_r^2]<\infty\) for every fixed \(r\).
Enlarging the constant to cover the finitely many \(n<n_0\),
choose \(C_r<\infty\) such that
\[
\sup_{n\geq1}\mathbb E[(S_{n,r}^{(1)})^2]\leq C_r.
\]
The population of \(\mathsf T^{\mathrm{ty}}\) through depth \(r\)
has the same domination, since \(\Lambda_s(\mathcal T)\leq\bar\kappa\),
and its second moment can also be bounded by \(C_r\).

Suppose the \(t\)th processed row belongs to vertex \(v_t\),
and let \(\mathcal D_t\) and \(\mathcal H_t\) be the used-label
set and history immediately before that row is exposed.
The decision to process this row and its parent label are
\(\mathcal H_t\)-measurable. Conditional on that history,
\[
\mathbb P\bigl(
 A_{v_tj}(n)=1\text{ for some }j\in\mathcal D_t\setminus\{v_t\}
 \mid\mathcal H_t\bigr)
\leq\sum_{j\in\mathcal D_t\setminus\{v_t\}}p_{v_tj}(n)
\leq\frac{\bar\kappa|\mathcal D_t|}{n}.
\]
Let \(R_{n,r}\) be the number of processed rows.
Successive conditioning and the union bound give
\[
\mathbb P(\mathcal C_{n,r}^{(1)})
\leq\frac{\bar\kappa}{n}
 \mathbb E\Bigl[\sum_{t=1}^{R_{n,r}}|\mathcal D_t|\Bigr]
\leq\frac{\bar\kappa}{n}
 \mathbb E[(S_{n,r}^{(1)})^2]
\leq\frac{\bar\kappa C_r}{n}.
\]
The conditional estimate applies before each row is examined;
we never condition on the eventual absence of collisions.
On \((\mathcal C_{n,r}^{(1)})^c\), the discovery tree equals the
entire outgoing depth-\(r\) restriction, not merely a spanning tree,
because every outgoing row at depth strictly below \(r\) has
been fully exposed.

Fix \(K\geq1\), and use the stopped explorations from Step~1.
Suppose nonabsorbed current trees at stage \(h<r\) converge in
\(\mathfrak X_h^{\mathrm{ty}}\).
Their finite shapes eventually agree, and the frontier vertices
can be matched as \(v_{n,1},\ldots,v_{n,q}\) and
\(v_1,\ldots,v_q\), with \(q\leq K\).
If \(i_{n,a}\) is the label of \(v_{n,a}\) and \(U_n\) is the
used-label set at the beginning of the stage, then
\(\tau_{i_{n,a}}(n)\to\tau_{v_a}\) and \(|U_n|\leq K\).
Given any compatible history, the frontier rows have distinct
tails and their indicators remain independent. Step~1 therefore
gives, under these conditional laws,
\[
\bigl(\Xi_{n,i_{n,a}}^{U_n}\bigr)_{a=1}^q
\Rightarrow(\Pi_a)_{a=1}^q,
\qquad
\Pi_a\sim\operatorname{PRM}(\Lambda_{\tau_{v_a}}),
\]
with independent limiting point measures. For \(q=0\), the
current tree is retained.

The offspring measures in this display use the common set \(U_n\)
from the beginning of the stage. They may therefore share a new
finite-graph label, which is a collision. Conditional on the
current history \(\mathcal H\),
\[
\begin{aligned}
\mathbb P(\text{collision during the next stage}\mid\mathcal H)
&\leq\sum_{a=1}^q\sum_{j\in U_n\setminus\{i_{n,a}\}}
       p_{i_{n,a}j}(n)
+\sum_{1\leq a<b\leq q}\sum_{j\notin U_n}
       p_{i_{n,a}j}(n)p_{i_{n,b}j}(n)\\
&\leq\frac{qK\bar\kappa}{n}
       +\binom q2\frac{\bar\kappa^2}{n}.
\end{aligned}
\]
The two terms account for arcs to used labels and common new children,
respectively. Apart from this event, the next generation is obtained
by attaching a distinct child for each atom of the corresponding
parent's offspring measure.

This attachment is continuous. For deterministic finite counting
measures \(\nu_{n,a}\to\nu_a\) weakly, their total masses converge
and hence, being integers, eventually agree. Their atoms can then
be matched, with multiplicities respected, so that all corresponding
types converge. Attaching these children to matched parents preserves
convergence in the typed local topology. The resulting unlabelled
tree does not depend on the temporary enumerations.
The total number of vertices is eventually constant along a
convergent sequence, so sending trees with more than \(K\) vertices
to \(\dagger\) is also continuous.

We can now induct over \(h\). At \(h=0\), convergence follows from
\(\tau_{I_n}(n)\sim\mu_n\Rightarrow\mu\).
For a bounded continuous test function of the next-stage stopped
tree, its expectation under the limiting branching extension is a
bounded continuous function of the current tree, by the Feller
offspring kernel and attachment continuity.
The finite-graph conditional expectation converges to this function
uniformly on compact sets of current trees and over compatible
histories. Otherwise, a sequence of violating histories with current
trees in a compact set would have a convergent subsequence,
contradicting the one-stage convergence just proved.
The induction hypothesis gives tightness of the current-state laws.
On a compact set use the uniform conditional-expectation bound,
and on its complement use boundedness. The resulting error tends
to zero in \(L^1\). Taking expectations and applying current-state
weak convergence to the limiting continuous function proves
next-stage convergence. At \(\dagger\), both processes remain
absorbed. This establishes stopped convergence through depth \(r\).

The terminal stopped tree agrees with
\([G_n^\kappa,I_n,\tau(n)]_r\) outside
\(\mathcal C_{n,r}^{(1)}\cup\{S_{n,r}^{(1)}>K\}\), whose probability
is at most
\[
\frac{\bar\kappa C_r}{n}+\frac{C_r}{K^2}.
\]
The limiting size-cutoff probability is at most \(C_r/K^2\).
Extend a bounded continuous test function on
\(\mathfrak X_r^{\mathrm{ty}}\) by zero at \(\dagger\).
Its expectations for the original and stopped explorations differ
by at most its supremum norm times the respective stopping
probability. Letting first \(n\to\infty\) and then \(K\to\infty\)
therefore gives
\[
[G_n^\kappa,I_n,\tau(n)]_r
\Rightarrow[\mathsf T^{\mathrm{ty}}]_r.
\]

\subparagraph{Two-root convergence.}
Choose \(J_n\) independently and uniformly from \([n]\), independently
of \(I_n\) and the graph. Explore from both roots, declaring both
initial labels used and using one common used-label set.
The argument follows the two-exploration decoupling principle for
directed random graphs; see \citet{CaoOlveraCravioto2020}.
Declare a joint collision if the roots coincide or a present arc
reaches a used label, including a label discovered by the other
exploration. Stop at the first such event, denoted by
\(\mathcal C_{n,r}^{(2)}\), or after both explorations finish.
This event includes any internal collision and every intersection
of the two depth-\(r\) neighborhoods.

Before a joint collision, the numbers of exposed rows and used
labels are bounded by
\(S_{n,r}^{\mathrm{tot}}:=|V_r^+(I_n)|+|V_r^+(J_n)|\).
The one-root moment bound gives
\(\mathbb E[(S_{n,r}^{\mathrm{tot}})^2]\leq4C_r\), without
assuming independence of the two neighborhoods.
Since \(\mathbb P(I_n=J_n)=1/n\), the same conditional exposure
estimate yields
\[
\mathbb P(\mathcal C_{n,r}^{(2)})
\leq\frac1n+\frac{\bar\kappa}{n}
             \mathbb E[(S_{n,r}^{\mathrm{tot}})^2]
\leq\frac{c_{2,r}}{n},
\qquad c_{2,r}:=1+4\bar\kappa C_r.
\]
In particular,
\[
\mathbb P\bigl(V_r^+(I_n)\cap V_r^+(J_n)\ne\varnothing\bigr)
\leq\frac{c_{2,r}}{n}\longrightarrow0.
\]

For joint convergence, also stop if either tree would exceed \(K\)
vertices, representing any stopping event by one isolated state.
Apply the same size rule to two independent copies of
\(\mathsf T^{\mathrm{ty}}\).
The initial types have joint law \(\mu_n^{\otimes2}\Rightarrow
\mu^{\otimes2}\). Sending coincident roots to the absorbing state
changes this initial law on an event of probability \(1/n\).
Before stopping, there are at most \(2K\) used labels; given the
history, all unprocessed frontier rows have distinct tails and
independent indicators. The one-generation convergence, stage
collision estimate, and attachment argument therefore apply jointly,
with \(2K\) in place of \(K\).
The same induction proves convergence of the stopped pair to the
correspondingly stopped pair of independent branching processes.
Again, independence is used conditional on the current history,
not conditional on a terminal no-collision event.

The stopped pair differs from the original pair only on an event
of probability at most \(c_{2,r}/n+2C_r/K^2\).
The limiting stopping probability is at most \(2C_r/K^2\).
Removing the cutoff as above gives
\[
\bigl([G_n^\kappa,I_n,\tau(n)]_r,
      [G_n^\kappa,J_n,\tau(n)]_r\bigr)
\Rightarrow
\bigl([\mathsf T_1^{\mathrm{ty}}]_r,
      [\mathsf T_2^{\mathrm{ty}}]_r\bigr),
\]
where the two limiting typed branching processes are independent.

Define
\[
\mathcal P_{n,r}^{\mathrm{ty}}
:=\frac1n\sum_{i=1}^n\delta_{[G_n^\kappa,i,\tau(n)]_r},
\qquad
\mathcal P_r^{\mathrm{ty}}
:=\operatorname{Law}([\mathsf T^{\mathrm{ty}}]_r).
\]
For real-valued \(F\in C_b(\mathfrak X_r^{\mathrm{ty}})\),
conditional uniform sampling gives
\[
\begin{aligned}
\mathbb E\langle\mathcal P_{n,r}^{\mathrm{ty}},F\rangle
&=\mathbb E F([G_n^\kappa,I_n,\tau(n)]_r),\\
\mathbb E\bigl[\langle\mathcal P_{n,r}^{\mathrm{ty}},F\rangle^2\bigr]
&=\mathbb E\bigl[
F([G_n^\kappa,I_n,\tau(n)]_r)
F([G_n^\kappa,J_n,\tau(n)]_r)\bigr].
\end{aligned}
\]
The one- and two-root convergences imply convergence of these
moments to \(a\) and \(a^2\), respectively, where
\(a:=\langle\mathcal P_r^{\mathrm{ty}},F\rangle\).
Expanding the squared error therefore yields
\[
\langle\mathcal P_{n,r}^{\mathrm{ty}},F\rangle
\xrightarrow{L^2}\langle\mathcal P_r^{\mathrm{ty}},F\rangle.
\]
Since \(\mathfrak X_r^{\mathrm{ty}}\) is Polish, choose a countable
convergence-determining class of bounded continuous functions.
From any subsequence, a diagonal extraction gives a further
subsequence with simultaneous almost-sure convergence of all
integrals in this class. The projected laws converge weakly on
that probability-one event. The subsequence criterion gives
\[
\mathcal P_{n,r}^{\mathrm{ty}}
\overset{\mathbb P}{\Rightarrow}\mathcal P_r^{\mathrm{ty}}.
\]

\noindent {\bf Step 3: Type-dependent independent marking.}
The continuous forgetful map
\(\pi_r^{\mathrm{ty}}:\mathfrak X_r\to\mathfrak X_r^{\mathrm{ty}}\)
removes shocks, responses, and exposures, retaining the graph
and structural vertex types.
For \(h\in\mathfrak X_r^{\mathrm{ty}}\), choose a finite labelled
representative \(H=(V,E,o,\tau_H)\), and independently attach
\[
(\epsilon_v,f_v)\sim\mathsf K^V_{\tau_H(v)},\quad v\in V,
\qquad
C_{vu}\sim\mathsf K^E_{\tau_H(v),\tau_H(u)},\quad (v,u)\in E.
\]
Let \(K_r(h,\cdot)\) be the law of the resulting marked
isomorphism class. A change of representative only permutes the
mark coordinates in a type-preserving way, so this law is
well defined. It is supported on the fiber above \(h\):
\[
K_r\bigl(h,(\pi_r^{\mathrm{ty}})^{-1}(\{h\})\bigr)=1.
\]

The kernel \(K_r\) is Feller. If \(h_n\to h\), the restrictions
have an eventually common finite directed shape after matching,
and all corresponding types converge. By
Assumption~\ref{ass:feller-marking-kernels}, the finite products of
vertex- and edge-mark laws converge weakly.
Attaching the type coordinates and passing to the marked
isomorphism class are continuous, giving
\(K_r(h_n,\cdot)\Rightarrow K_r(h,\cdot)\).
In particular, \(K_r\) is a Borel probability kernel and, for every
\(F\in C_b(\mathfrak X_r)\),
\[
\overline F(h):=\int_{\mathfrak X_r}F(\xi)K_r(h,d\xi)
\]
belongs to \(C_b(\mathfrak X_r^{\mathrm{ty}})\), with
\(\|\overline F\|_\infty\leq\|F\|_\infty\).
For \(Q^{\mathrm{ty}}\in\mathscr P(\mathfrak X_r^{\mathrm{ty}})\),
define the marked lift by
\[
(Q^{\mathrm{ty}}K_r)(A):=\int K_r(h,A)Q^{\mathrm{ty}}(dh).
\]
Then
\[
(\pi_r^{\mathrm{ty}})_\#(Q^{\mathrm{ty}}K_r)=Q^{\mathrm{ty}},
\qquad
(q_r)_\#\mathcal P=\mathcal P_r^{\mathrm{ty}}K_r,
\]
where the second identity follows from the marking rule in
Definition~\ref{def:directed-marked-kernel-bp}.

\noindent{\bf Step 4: Empirical marked convergence and the full local limit.}
For real-valued \(F\in C_b(\mathfrak X_r)\), set
\[
Z_{n,r}(F):=\frac1n\sum_{i=1}^n
F([G_n^\kappa,i,\mathcal M_n]_r)
=\langle(q_r)_\#\mathcal P_n,F\rangle.
\]
Let \(H_n^{\mathrm{ty}}:=(G_n^\kappa,\tau(n))\) be the full typed
graph. Conditional on this graph, the marking rule gives
\[
\begin{aligned}
\mathbb E[Z_{n,r}(F)\mid H_n^{\mathrm{ty}}]
&=\langle\mathcal P_{n,r}^{\mathrm{ty}},\overline F\rangle\\
&\xrightarrow{\mathbb P}
\langle\mathcal P_r^{\mathrm{ty}},\overline F\rangle
=\langle(q_r)_\#\mathcal P,F\rangle.
\end{aligned}
\]
This uses the typed empirical convergence of Step~2 and the Feller
property established in Step~3.

It remains to control the residual mark randomness. Write
\(Y_{n,i}:=F([G_n^\kappa,i,\mathcal M_n]_r)\). Then
\[
\operatorname{Var}(Z_{n,r}(F)\mid H_n^{\mathrm{ty}})
=\frac1{n^2}\sum_{i,j=1}^n
\operatorname{Cov}(Y_{n,i},Y_{n,j}\mid H_n^{\mathrm{ty}}).
\]
Every mark used by \(Y_{n,i}\) belongs to a vertex in \(V_r^+(i)\)
or an edge with both endpoints in \(V_r^+(i)\).
If \(V_r^+(i)\cap V_r^+(j)=\varnothing\), the two observables use
disjoint mark families and are conditionally independent by
Assumption~\ref{ass:feller-marking-kernels}. Their conditional
covariance is therefore zero. For overlapping pairs, boundedness
gives the sufficient estimate
\(\bigl|\operatorname{Cov}(Y_{n,i},Y_{n,j}\mid H_n^{\mathrm{ty}})\bigr|
\leq4\|F\|_\infty^2\). Hence
\[
\operatorname{Var}(Z_{n,r}(F)\mid H_n^{\mathrm{ty}})
\leq\frac{4\|F\|_\infty^2}{n^2}
\sum_{i,j=1}^n
\mathbf1_{\{V_r^+(i)\cap V_r^+(j)\ne\varnothing\}}.
\]
Averaging and using the independent uniform roots from Step~2,
\[
\begin{aligned}
\mathbb E\bigl[
|Z_{n,r}(F)-\mathbb E[Z_{n,r}(F)\mid H_n^{\mathrm{ty}}]|^2
\bigr]
=&\mathbb E\bigl[
\operatorname{Var}(Z_{n,r}(F)\mid H_n^{\mathrm{ty}})\bigr]\\
\leq&4\|F\|_\infty^2
\mathbb P\bigl(V_r^+(I_n)\cap V_r^+(J_n)\ne\varnothing\bigr)
\leq\frac{4\|F\|_\infty^2c_{2,r}}{n}\longrightarrow0.
\end{aligned}
\]
Together with conditional-mean convergence, this proves
\[
\langle(q_r)_\#\mathcal P_n,F\rangle
\xrightarrow{\mathbb P}\langle(q_r)_\#\mathcal P,F\rangle
\qquad\text{for every }F\in C_b(\mathfrak X_r).
\]

Choose a countable convergence-determining class in
\(C_b(\mathfrak X_r)\). From any subsequence, a diagonal
extraction gives a further subsequence along which all integrals
in this class converge a.s. The projected laws then converge
weakly on a probability-one event. The subsequence criterion gives
\[
(q_r)_\#\mathcal P_n
\overset{\mathbb P}{\Rightarrow}(q_r)_\#\mathcal P
\qquad\text{for every fixed }r\geq0.
\]
Corollary~\ref{coro:projective-convergence}, with \(m=1\), now yields
\(\mathcal P_n\overset{\mathbb P}{\Rightarrow}\mathcal P\), as claimed.
\end{proof}

\end{document}